\def\ep{{\varepsilon}}
\let\epsilon=\varepsilon
\def\R{\mathbb R}
\def\C{\mathbb C}
\def\PP{\mathbb P}
\def\dans{\longrightarrow}
\def\weak{\rightharpoonup}
\newcommand{\E}{\mathscr E}
\newcommand{\s}{\mathrm s}
\newcommand{\ess}{\mathrm{ess}}
\renewcommand{\Re}{\mathrm{Re}\,}
\newcommand{\1}{\mathbf 1}
\newcommand{\TV}{\mathrm{TV}}
\DeclareMathOperator*{\essinf}{ess\,inf}
\DeclareMathOperator*{\esssup}{ess\,sup}
\DeclareMathOperator*{\meas}{meas}
\newcommand\LL{\mathscr{L}}
\newcommand\MM{\mathscr{M}}
\newcommand\A{A} 
\newcommand\BB{B} 
\let\phi=\varphi
\newtheorem{theorem}{\textbf{Theorem}}[section]
\newtheorem{lemma}[theorem]{\textbf{Lemma}}
\newtheorem{proposition}[theorem]{\textbf{Proposition}}
\newtheorem{corollary}[theorem]{\textbf{Corollary}}
\newtheorem{definition}[theorem]{\textbf{Definition}}
\newtheorem{assumption}{\textbf{Assumption}}[section]
\theoremstyle{remark}
\newtheorem{example}[theorem]{\textbf{Example}}
\numberwithin{equation}{section}
\title{Confining integro-differential equations 
originating from evolutionary biology: 
ground states and long time dynamics}
\date{}
\begin{document}

\maketitle

\begin{center}
{\large\bf Matthieu Alfaro \footnote{Universit\'e de Rouen Normandie, CNRS, Laboratoire de Math\'ematiques Rapha\"el Salem, Saint-Etienne-du-Rouvray, France \& BioSP, INRAE, 84914, Avignon, France. E-mail: matthieu.alfaro@univ-rouen.fr}, Pierre Gabriel \footnote{\label{uvsq}Laboratoire de Mathématiques de Versailles, UVSQ, CNRS, Université Paris-Saclay, 45 Avenue des États-Unis, 78035Versailles cedex, France. Email: pierre.gabriel@uvsq.fr} and Otared Kavian \footnote{Laboratoire de Mathématiques de Versailles, UVSQ, CNRS, Université Paris-Saclay, 45 Avenue des États-Unis, 78035Versailles cedex, France. Email: otared.kavian@uvsq.fr}}\\
[2ex]

\end{center}


\tableofcontents

\vspace{10pt}

\begin{abstract} 
We consider nonlinear mutation selection models, known as {\it replicator-mutator} equations in evolutionary biology. They involve a nonlocal mutation kernel and a confining fitness potential. We prove that the long time behaviour of the Cauchy problem is determined by the principal eigenelement of the underlying linear operator.
The novelties compared to the literature on these models are about the case of symmetric mutations: we propose a new milder sufficient condition for the existence of a principal eigenfunction, and we provide what is to our knowledge the first quantification of the spectral gap. We also recover existing results in the non-symmetric case, through a new approach.
\\

\noindent{\underline{Key Words:} evolutionary genetics, nonlocal diffusion, eigenelements, long time behaviour.}\\

\noindent{\underline{AMS Subject Classifications:} 45K05 (Integro partial differential equations), 92D15 (Problems related to evolution), 45C05 (Eigenvalue problems), 35B40 (Asymptotic behavior of solutions).}
\end{abstract}

\section{Introduction}\label{sec:intro}

The starting point of this work is the nonlinear integro-differential equation
\begin{equation}
\label{eq:rep-mut}
\partial_{t}u = \sigma ^2(J*u-u)-\left(W(x)-\overline W[u](t)\right)u, \quad t>0,\; x\in \R^N,
\end{equation}
where $\sigma>0$ is a given parameter, $J : x \mapsto J(x)$ a probability density on $\R^N$, and $W : x \mapsto W(x)$ a so-called confining potential (meaning that $W(x)\to +\infty$ as $|x| \to +\infty$), while ${\overline W}[u](t)$ is a nonlocal term defined by
$$ {\overline W}[u](t) := \langle W, u(t,\cdot)\rangle := \int_{\R^{N}} W(y)u(t,y)dy,$$
and can be seen, at least formally, as a Lagrange multiplier: it ensures that the solution to \eqref{eq:rep-mut} starting at $t=0$ from a probability density $u_{0}(x) = u(0,x)$ remains a probability density for $t>0$.

This model is known as a {\it replicator-mutator} model in evolutionary biology. In this context, at time $t>0$, $u(t,\cdot)$ stands for the probability distribution of the phenotypic trait $x$ (in a population) on the multi-dimensional phenotypic trait space $\R^N$. The function $x \mapsto - W(x)$ represents the {\it fitness} of the phenotype $x\in \R^N$ and models the individual reproductive success, and $t \mapsto -\overline W[u](t)$ stands for the mean fitness of the population at time $t$.

Our main goal in this paper is to understand, under appropriate assumptions, the long time behaviour of the Cauchy problem associated with \eqref{eq:rep-mut}. To do so, we prove two intermediate results on the underlying linear problem, which we believe are interesting in their own right. First,  concerning 
the linear eigenvalue problem 
\begin{equation}
\label{eq:eigenvalue}
-\sigma^2(J*u-u)+W(x)u=\lambda u \quad \text{ in } \R^N,
\end{equation}
we prove that there exists a unique nonnegative principal eigenfunction $\phi$ with total mass one, which is also called a Perron eigenfunction or {\it ground state} for such problems, see Theorem~\ref{thm:eigen}. Next, concerning the long time dynamics of the linear Cauchy problem 
\begin{equation}
\label{eq:parabolique}
\partial_{t}u = \sigma ^2(J*u-u)-W(x)u, \quad t>0,\; x\in \R^N,
\end{equation}
we show that it is well-posed in a family of relevant Banach spaces, and we prove that its long time dynamics is determined by the principal eigenfunction $\phi$, see Theorem~\ref{thm:asympto}. From this point on, we are in a position to establish our result on the nonlinear Cauchy problem~\eqref{eq:rep-mut}: we prove that any solution converges, as the time $t\to+\infty$, to a multiple of the ground state, see Theorem~\ref{thm:rep-mut}.

\medskip

Replicator-mutator models aim at describing Darwinian evolutionary processes,  whose fundamental principles are mutations and selection. Under the constraint of constant mass $\textstyle \int_\mathbb{R} u(t,x)\, dx=1$, the replicator dynamics   is given by
\begin{equation*}\label{nodiffusion}
\partial_{t}u = -\left(W(x) - {\overline W[u](t)}\right)u.
\end{equation*} 
As an attempt to take into account evolutionary phenomena, mutations are typically modelled by integral operators, thus yielding models such as \eqref{eq:rep-mut};  we refer to the influential work of M. Kimura \cite{Kimura1965}, as well as R. Lande \cite{Lan-75}, W.H. Fleming \cite{Fle-79} and R. B\"urger \cite{Bur-86, Bur-88}. In some situations, for instance those discussed in R. B\"urger \cite[Chapter VI, subsection 6.4]{Bur-00-book}, these integral operators can be approximated  by a local diffusion operator, as in
\begin{equation}
\label{eq:rep-mut-laplacien}
\partial_{t}u = \sigma ^2\Delta u -\left(W(x)-\overline W[u](t)\right)u.
\end{equation}
We refer to the work of N. Champagnat, R. Ferrière \& S. Méléard~\cite{Champagnat2008} or the recent paper by J.Y. Wakano, T. Funaki \& S. Yokoyama \cite{Wak-Fun-Yok-17} for a rigorous derivation of the replicator-mutator problem from individual based models.

The case of a linear fitness function, that is when $W(x) := -x$ (say $N=1$), was completely investigated in the works by M. Alfaro \& R. Carles~\cite{Alf-Car-14} (Laplacian case, Equation \eqref{eq:rep-mut-laplacien}) and R. Bürger~\cite{Bur-91} and M.E. Gil \& al. \cite{Gil-17} (mutation kernel case, Equation \eqref{eq:rep-mut}), whereas the quadratic case, that is $W(x) := -x^{2}$ (say $N=1$), was studied in M. Alfaro \& R. Carles~\cite{Alf-Car-17}  for Equation \eqref{eq:rep-mut-laplacien}. These two cases  share the property consisting in the fact that the function $W$ is unbounded from below, meaning that some phenotypes are infinitely well-adapted. These two cases  yield rich mathematical behaviours (acceleration, extinction) but,  unless introducing some {\it context-dependent} mutation kernels as performed by M.E. Gil \& al.  in \cite{Gil-Ham-Mar-Roq-18}, are not well-suited models for biological applications. 

On the other hand, the confining prototype case $W(x) := x^{2}$ (say $N=1$), considered in the pioneering work by M. Kimura~\cite{Kimura1965} and analyzed by R. Bürger~\cite{Bur-86} for Equation~\eqref{eq:rep-mut} and by M. Alfaro and R. Carles~\cite{Alf-Car-17} for Equation \eqref{eq:rep-mut-laplacien}, prevents extinction phenomena and leads to convergence to the underlying principal eigenfunction. In a related but different setting,  let us mention the recent work of F. Hamel \& al. \cite{Hamel2020} where a \lq\lq two-patches environment'' is considered. However, the case $W(x)=x^2$ does not suffice to  take into account more realistic cases for which fitness functions are defined by a linear combination of two components (e.g. birth and death rates), each maximized by different optimal values of the underlying trait, a typical case being $W(x) := x^4 - x^2$. In this setting, let us mention the recent work of M. Alfaro \& M. Veruete \cite{Alf-Ver-18} which provides a rigorous treatment of the Cauchy problem \eqref{eq:rep-mut-laplacien} when the fitness function $W$ is confining, and also raises the issue of  \emph{evolutionary branching},  consisting of the spontaneous splitting from uni-modal to multi-modal distribution of the trait. Our main goal is to extend these results to the mutation kernel model \eqref{eq:rep-mut}, revisiting the work of R. B\"urger~\cite{Bur-88} and refining it in the case when the convolution kernel $J$ is an even function (that is $J(x) = J(-x)$ for all $x\in {\Bbb R}^N$).

This requires to perform first a detailed analysis of the integral eigenproblem \eqref{eq:eigenvalue}. The local counterpart to \eqref{eq:eigenvalue}, namely
$$
-\sigma^2\Delta u + W(x)u = \lambda u,
$$
is  well understood, see for instance M. Reed \& B. Simon \cite{ReedSimonVol4}, L.A. Takhtajan \cite{Takhtajan}. In the quantum mechanics terminology, the principal eigenfunction $\phi$  is called the ground state and  corresponds to the bound-state having minimal energy, and the principal eigenvalue $\lambda_{1}$ is  characterized by a classical variational formulation.
In the non-local case \eqref{eq:eigenvalue}, the principal eigenvalue is not necessarily associated with a principal eigenfunction.
The lack of regularizing effect of the integral operator (notice that in dimension $N=1$, some regularization may be provided by an additional drift term, see \cite{Clo-Gab-20,Cov-Ham-20}) compared to the Laplace operator may result in situations where the ground state is a singular measure (containing atoms).
It happens when the fitness function $W$ is confining and has a cusp at its minimum, as noticed first by R. Bürger in~\cite{Bur-88} and then further investigated by R. Bürger \& I.M. Bomze in~\cite{Bur-Bom}.
More recently, this problem was revisited by J. Coville and co-authors in the case of a bounded domain \cite{Bon-Cov-Leg,Cov-10,Cov-13,Li-Cov-Wan}, see also Q. Griette \cite{Gri-19}.
In the study of these concentration phenomena (formation of Dirac masses), a first fundamental question is to identify sharp conditions on $J$ and $W$ that ensure the existence (or non-existence) of a principal eigenfunction.
Such conditions can be found in~\cite{Bur-88,Bur-Bom,Cov-10,Li-Cov-Wan}, and the first contribution of the present paper is to give a new milder criterion (see Assumption~\ref{ass:linkJW}) in the case of an even convolution kernel $J$.
We also recover, through a different method, the existing criteria in the non-even case, see Section~\ref{sec:non-sym}.
The second novel result is a quantification of the spectral gap in $L^2$, and accordingly of the rate of convergence to the equilibrium for Equation~\eqref{eq:rep-mut} in this space, under some stronger assumptions, see Theorem~\ref{thm:spectral-gap}.

Notice that in the works of R. Bürger or J. Coville, more general mutation kernels than convolution are usually considered, {\it i.e.} with $k(x,y)$ in place of $J(x-y)$.
To simplify the statement of the assumptions and limit technicalities in the proofs, we choose to treat only the convolution case, which is biologically relevant, see~\cite{Kimura1965}.
However, the proofs can be adapted to encompass more general kernels, replacing the evenness of $J$ by the symmetry of $k$ (namely $k(x,y)=k(y,x)$).


\section{Main results}\label{sec:main}

Before stating our main results, we give the notations and assumptions used throughout the paper.

For an almost everywhere positive measurable weight function $\rho$ we  denote by $L^2(\rho)$ the weighted Lebesgue space
$$L^2(\rho) := \left\{u : {\Bbb R}^N \dans {\Bbb R} \text{ such that } u \text{ is measurable and }\, \int_{{\Bbb R}^N}|u(x)|^2\rho(x) dx < \infty \right\}.$$
We shall denote by $\langle \cdot\,,\cdot\rangle$ the scalar product of $L^2({\Bbb R}^N)$ and by $\|\cdot\|_{L^2}$ its associated norm. 

For any choice of space $E$ among either of the Lebesgue spaces $L^{p}(\R^{N})$ with $1 \leq p <\infty$, the space of continuous functions converging to zero at infinity $C_{0}(\R^{N})$, or $\MM(\R^N)$ the space of bounded measures on ${\Bbb R}^N$, we denote by $\|\cdot\|_{E}$ the standard associated norm, namely the $L^p$ norm, the $L^\infty$ norm, or the total variation norm, respectively.

If $X,Y$ are two Banach spaces and $A : X \dans Y$ is a linear bounded operator we denote $\|A\|_{X \to Y}$ its norm in the space $\LL(X,Y)$.

Our precise assumptions on the mutation kernel and on the potential are the following.

\begin{assumption}[On the kernel $J$]\label{ass:kernel} The kernel $J:\R^{N}\dans \R$ satisfies
\begin{enumerate}
\item $J(x) \geq 0$  for almost all $x \in \R^N$.
\item There exists $r_0>0$ such that $J(x) > 0$ for almost all $x$ in the ball $B(0,r_{0})$.
\item $J\in L^{1}(\R ^{N})\cap L^{2}(\R ^{N})$ and $\displaystyle \int_{\R^N}J(x)dx = 1$.
\end{enumerate}
\end{assumption}

\begin{assumption}[Symmetry of the kernel $J$]\label{ass:J-symmetric} The kernel $J:\R^{N}\dans \R$ satisfies
$J(-x) = J(x)$ for almost all $x \in \R^N$.
\end{assumption}

\begin{assumption}[On the potential $W$]\label{ass:potential} The potential $W:\R^{N}\dans \R$ is a continuous function which satisfies
\begin{enumerate}
\item $W(x)\to +\infty$ as $|x| \to +\infty$ (confining assumption).
\end{enumerate}
Equation~\eqref{eq:rep-mut} is left unchanged by adding a constant to $W$, so we assume w.l.o.g. that
\begin{enumerate}[resume]
\item $W\geq 0$.
\end{enumerate}
\end{assumption}

In the sequel, for a function $f : \R^N \dans \R$ and a constant $\alpha \in \R$, the set of points $x \in \R^N$ such that $f(x) \geq \alpha$ is denoted by $[f \geq \alpha]$.

\begin{assumption}[Linking $J$, $\sigma$, and $W$]\label{ass:linkJW} There exist $\epsilon > 0$ and a Borel set $B\subset\R^N$ such that
\begin{equation}
\label{hyp:non-int}
\sigma^2\iint_{B_\epsilon\times B_\epsilon}\frac{J(x-y)}{W(x)W(y)}\,dxdy>\int_{B_\epsilon}\frac{1}{W(x)}\,dx
\end{equation}
where $B_\epsilon:=B\cap[W\geq\epsilon]$.
\end{assumption}

\medskip

We define the linear operator $(L,D(L))$ by setting
\begin{equation}\label{eq:Def-L}
Lu := - K*u + W(x) u, \quad\text{for }\, u \in D(L) := L^2(1+W), \qquad\text{where }\, K := \sigma^2J.
\end{equation}
It can be easily seen that $(L,D(L))$ is an unbounded operator on $L^2({\Bbb R}^N)$, and that when $J$ is an even function, that is when Assumption \ref{ass:J-symmetric} is satisfied, the operator $L$ is self-adjoint. Also, since Young's inequality yields $\|K*u\| _{L^2}\leq \|K\|_{L^1}\|u\|_{L^2} = \sigma^2\|u\|_{L^2}$ for all $u \in D(L) = L^2(1+W)$, we have
$$\langle Lu,u\rangle \geq - \sigma^2\|u\|_{L^2}^2,$$
so that when Assumption \ref{ass:J-symmetric} is satisfied, that is when $L$ is self-adjoint, the numerical range of $L$, and thus its spectrum, is contained in $[-\sigma^2,+\infty)$. In general neither $L$ nor its resolvent are compact, so in order to show that it has an eigenvalue, when $L$ is self-adjoint, we use the variational characterization of the bottom of its spectrum. Namely, setting 
\begin{equation}
\label{def:S}
S := \left\{u\in L^2(1+W)\text{ such that }  \int_{\R^N} u^2(x)dx = 1\right\},
\end{equation}
we define the energy functional
\begin{equation}
\label{def:energy}
\E(u):= \langle Lu,u\rangle = -\int_{\R ^N} (K*u)(x)u(x)dx + \int_{\R^N}W(x)u^2(x)dx, 
\end{equation}
and its infimum
\begin{equation}
\label{def:lambda-un}
\lambda_{1} := \inf_{u\in S} \E(u),
\end{equation}
and we show that under our assumptions $\lambda_{1}$ is achieved. Our first main result concerns the eigenvalue problem \eqref{eq:eigenvalue}, namely existence and uniqueness (up to a multiplicative constant) of a principal eigenfunction, or ground state.

\begin{theorem}[Principal eigenpair, symmetric case]\label{thm:eigen} 
Let Assumptions \ref{ass:kernel}, \ref{ass:J-symmetric}, \ref{ass:potential} and \ref{ass:linkJW} hold.
Then $\lambda_{1}$, defined by \eqref{def:lambda-un}, is achieved by a unique $\phi \in S \cap C_{0}(\R^N)$ such that $\phi > 0$ and
\begin{equation}\label{eq-vp}
-K*\phi + W(x)\phi = \lambda_{1} \phi\quad  \text{ in }\, \R ^N.
\end{equation}
Moreover,
\begin{enumerate}[label=\roman*)]
\item $-\sigma^2 < \lambda_{1}\leq -b_\epsilon$, where
\begin{equation}\label{eq:b-eps}
b_\epsilon:=\bigg(\sigma^2\iint_{B_\epsilon\times B_\epsilon}\frac{J(x-y)}{W(x)W(y)}\,dxdy-\int_{B_\epsilon}\frac{dx}{W(x)}\bigg)\bigg(\int_{B_\epsilon}\frac{dx}{W(x)}\bigg)^{-2}>0,
\end{equation}
\item $\displaystyle\forall x\in\R^N,\ 0 < \phi(x) \leq \frac{\|K\|_{L^2}}{W(x) - \lambda_{1}}$,
\item $\lambda_{1}$ is an eigenvalue of multiplicity one, that is if $\psi\in L^2(\R^N)$, or $\psi \in C_{0}(\R^N)$, is such that
\begin{equation*}\label{psi-ae}
-K*\psi + W(x)\psi = \lambda_{1} \psi \quad  \text{ a.e. in }\, \R^N,
\end{equation*}
then there is $\alpha \in \R$ such that $\psi = \alpha  \phi$.
\end{enumerate}
\end{theorem}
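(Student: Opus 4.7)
The strategy is the direct method of the calculus of variations applied to $\E$ restricted to $S$, exploiting the self-adjointness of $L$ (granted by Assumption~\ref{ass:J-symmetric}) and the tightness provided by the confining weight $W$ to bypass the lack of compactness of $L$ and of its resolvent. For the two-sided bound on $\lambda_1$: Young's inequality gives $|\int(K*u)u\,dx|\leq\|K\|_{L^1}\|u\|_{L^2}^2=\sigma^2$ for $u\in S$, so combined with $W\geq 0$ one gets $\E(u)\geq-\sigma^2$, hence $\lambda_1\geq-\sigma^2$. For the upper bound I would plug into the Rayleigh quotient a test function built from $\1_{B_\epsilon}/W$, chosen so that $\E(u)/\|u\|_{L^2}^2$ reproduces the quantity appearing in Assumption~\ref{ass:linkJW}; a direct computation then yields $\lambda_1\leq -b_\epsilon$, and in particular $\lambda_1<0$, which will be crucial below. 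The strict lower bound $\lambda_1>-\sigma^2$ follows a posteriori from $\phi>0$, since it forces $\int W\phi^2>0$.

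For existence, let $(u_n)\subset S$ be a minimizing sequence. The identity $\int W u_n^2=\E(u_n)+\int(K*u_n)u_n\leq\E(u_n)+\sigma^2$ shows that $(u_n)$ is bounded in $L^2(1+W)$, so up to extraction $u_n\weak\phi$ there, hence in $L^2$. The confining weight yields tightness $\int_{|x|>R}u_n^2\leq(\inf_{|x|>R}W)^{-1}\int W u_n^2\Tend{R}{\infty}0$ uniformly in $n$. To handle the nonlocal term in $\E$, I would show $K*u_n\to K*\phi$ strongly in $L^2$: weak convergence and $K(x-\cdot)\in L^2$ yield pointwise convergence, the uniform bound $|K*u_n(x)|\leq\|K\|_{L^2}$ combined with dominated convergence gives $L^2_{\rm loc}$ convergence, while a tightness estimate on $K*u_n$ (obtained by splitting $u_n$ into near and far parts and using $K\in L^1$) upgrades this to strong $L^2$ convergence. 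It then follows that $\int(K*u_n)u_n\to\int(K*\phi)\phi$, and Fatou on the weight term gives $\E(\phi)\leq\liminf\E(u_n)=\lambda_1$. Weak convergence only ensures $\|\phi\|_{L^2}\leq 1$, but were this strict, $\phi/\|\phi\|_{L^2}\in S$ would give $\E(\phi)\geq\lambda_1\|\phi\|_{L^2}^2>\lambda_1$ (using $\lambda_1<0$), contradicting the previous inequality. Thus $\phi\in S$ is a minimizer, and the Lagrange multiplier equation is precisely \eqref{eq-vp}.

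For the remaining properties: since $K\geq 0$, the inequality $\E(|u|)\leq\E(u)$ (from $K(x-y)u(x)u(y)\leq K(x-y)|u(x)||u(y)|$) shows $|\phi|$ is also a minimizer, so every minimizer and its absolute value satisfy \eqref{eq-vp}. Rewriting the eigenequation as $\phi(x)=(K*\phi)(x)/(W(x)-\lambda_1)$, with $W-\lambda_1\geq-\lambda_1>0$, and noting that $K*\phi\in C_0(\R^N)$ (convolution of two $L^2$ functions) with $\|K*\phi\|_{L^\infty}\leq\|K\|_{L^2}\|\phi\|_{L^2}=\|K\|_{L^2}$, I obtain continuity of $\phi$, its decay at infinity, and the stated pointwise bound. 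For strict positivity of $|\phi|$: the open set $\{|\phi|>0\}$ is nonempty since $\|\phi\|_{L^2}=1$; if it were not all of $\R^N$, I pick $x_0$ on its boundary, so $|\phi|(x_0)=0$, and then $(K*|\phi|)(x_0)=0$ combined with Assumption~\ref{ass:kernel}(2) forces $|\phi|\equiv 0$ on $B(x_0,r_0)$, contradicting $x_0\in\partial\{|\phi|>0\}$. Hence every minimizer has constant sign, and one chooses $\phi>0$. For simplicity, an arbitrary eigenfunction $\psi$ for $\lambda_1$ can be taken real by self-adjointness, and the preceding argument shows that $\psi$ has constant sign, WLOG $\psi>0$; if $\phi$ and $\psi$ were linearly independent, a suitable combination $\chi=\alpha\phi+\beta\psi$ with $\|\chi\|_{L^2}=1$ and $\langle\chi,\phi\rangle=0$ would be a minimizer, hence of constant sign, contradicting $\langle\chi,\phi\rangle=0$ with $\phi>0$, so $\psi\in\vect(\phi)$. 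The main obstacle in this plan is the compactness step: since neither $L$ nor its resolvent is compact, strong $L^2$ convergence of the minimizing sequence must be assembled by hand from weak compactness, tightness from the confining weight, and the fact that convolution with $K\in L^1\cap L^2$ is strongly continuous on tight sequences.
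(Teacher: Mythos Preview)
Your existence argument is essentially the paper's: direct method on $\E$ over $S$, boundedness in $L^2(1+W)$ from the energy, weak compactness, and the key continuity $\int(K*u_n)u_n\to\int(K*\phi)\phi$. You phrase this last step as strong $L^2$-convergence of $K*u_n$ (pointwise plus tightness), whereas the paper's Lemma~\ref{lem:convergence} argues the bilinear convergence directly via $L^2_{\rm loc}$-convergence of $K*u_n$ and a tail estimate on $\int_{|x|>R}(K*u_n)u_n$; the ingredients are identical. Your connectedness argument for $|\phi|>0$ is a specialization of the paper's strong maximum principle (Lemma~\ref{lem:Max-principle}) to the eigenequation, and the rest (regularity, pointwise bound, strictness of $\lambda_1>-\sigma^2$) matches.

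The real difference is in item~iii). You argue simplicity by the self-adjoint route: any $L^2$-eigenfunction is a minimizer, hence continuous and of constant sign, and an orthogonal combination gives a contradiction. The paper instead runs a sliding argument in $C_0$: set $u_\epsilon:=\phi-\epsilon\psi$, use the equation at a global negative minimum together with $W(x)-\lambda_1-\sigma^2>0$ for $|x|$ large to propagate $u_\epsilon\geq0$ from a ball to all of $\R^N$, and show that at the extremal $\alpha:=\sup\{\epsilon:u_\epsilon\geq0\}$ one has $u_\alpha\equiv0$ by the maximum principle. Your argument is cleaner for $\psi\in L^2$, but it leaves a genuine gap for the second alternative in the statement, $\psi\in C_0(\R^N)$: under the mere confining hypothesis on $W$ you cannot conclude $\psi\in L^2$, since the only available bound $|\psi(x)|\leq\|K\|_{L^1}\|\psi\|_{L^\infty}/(W(x)-\lambda_1)$ need not be square-integrable (e.g.\ $W(x)\sim\log(2+|x|)$ is admissible). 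Then $\psi$ is not a minimizer and ``the preceding argument'' does not apply, nor does $|\psi|$ satisfy the eigenequation. The paper's sliding proof, by contrast, uses only $\phi,\psi\in C_0$ and the pointwise equation, so it handles both the $L^2$ and the $C_0$ cases simultaneously.
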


\medskip

 We now turn to the long time dynamics of the linear equation \eqref{eq:parabolique}.
Clearly, under the assumptions of Theorem \ref{thm:eigen} the pair $(\lambda_{*} ,\phi)$ with 
\begin{equation}\label{eq:Def-lambda-*}
\lambda_{*} := \lambda_1 + \sigma^2 > 0
\end{equation}
is solution to~\eqref{eq:eigenvalue}, and thus the function $u_{*}$ defined by
\begin{equation*}\label{eq:Def-u*}
u_{*}(t,x) := {\rm e}^{-\lambda_{*}t}\phi(x),
\end{equation*}
is solution to~\eqref{eq:parabolique}. In fact the large time behaviour of any nonnegative solution to \eqref{eq:parabolique} is given by this particular solution (recall that we denote by $E$ either of the spaces $L^p({\Bbb R}^N)$ for $1\leq p < \infty$, or $C_{0}({\Bbb R}^N)$ or $\MM({\Bbb R}^N)$).

\begin{theorem}[Long time dynamics, linear case]\label{thm:asympto}
Let Assumptions \ref{ass:kernel}, \ref{ass:J-symmetric}, \ref{ass:potential} and \ref{ass:linkJW} hold, and for any $u_{0}\in E$ denote by $\langle u_{0},\phi \rangle = \int_{\R^N}u_{0}(x)\phi(x)dx$ its weighted mass\footnote{As will be proved in Corollary \ref{villoise}, $\varphi \in L^1(\R^N)$ and thus, since $\varphi \in C_0(\R^N)$, $\varphi \in L^q(\R^N)$ for any $1\leq q \leq \infty$. In particular the weighted mass $\langle u_{0},\phi \rangle$ is finite as soon as $u_0\in E$.}.
Then there exist two constants $C > 0$ and $a > 0$ such that, for any $u_{0}\in E$, the solution $u=u(t,x)$ of \eqref{eq:parabolique} starting from $u_{0}=u_0(x)$ (see Section \ref{sec:asympto} for the precise definition) satisfies, for all $t > 0$,
$$
{\rm e}^{\lambda_{*}t} \big\|u(t,\cdot) - \langle u_{0},\phi \rangle u_{*}(t,\cdot)\big\|_{E} = \big\| {\rm e}^{\lambda_{*} t}u(t,\cdot)-\langle u_{0},\phi \rangle \phi\big\|_{E} \leq C\, {\rm e}^{-at}\left\|u_{0} - \langle u_{0},\phi \rangle\phi\right\|_{E} .
$$
\end{theorem}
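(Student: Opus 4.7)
The plan is to reduce the statement to an exponential decay estimate for perturbations of the ground state, and then prove that estimate first in $L^2$ (where self-adjointness is available) and transfer it to the other spaces $E$. Once well-posedness of \eqref{eq:parabolique} in each $E$ is set up, I would introduce
$$w(t,x) := {\rm e}^{\lambda_{*}t}u(t,x) - \langle u_0,\phi\rangle\phi(x).$$
A short computation using $\lambda_{*}=\lambda_{1}+\sigma^2$ shows that $w$ satisfies $\partial_t w = -(L-\lambda_1)w = K*w - (W-\lambda_1)w$, with initial datum $w_0 = u_0-\langle u_0,\phi\rangle\phi$. Testing the equation against $\phi$ and using $K(-\cdot)=K(\cdot)$ together with the eigenvalue equation $K*\phi=(W-\lambda_1)\phi$ (Theorem \ref{thm:eigen}), one sees that $\frac{d}{dt}\int w\,\phi\,dx = 0$; since $\int w_0\phi\,dx = 0$ (as $\int\phi^2=1$), the orthogonality $\langle w(t,\cdot),\phi\rangle = 0$ is preserved. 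It thus suffices to prove $\|w(t,\cdot)\|_E\leq C{\rm e}^{-at}\|w_0\|_E$ for any such $\phi$-orthogonal initial datum.

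\textbf{Spectral gap in $L^2$.} For $E=L^2(\R^N)$, the operator $L$ is self-adjoint and I would introduce the restricted Rayleigh quotient
$$\lambda_{2} := \inf\bigl\{\langle Lu,u\rangle : u\in L^2(1+W),\ \|u\|_{L^2}=1,\ \langle u,\phi\rangle=0\bigr\},$$
the plan being to show $\lambda_{2}>\lambda_{1}$ by contradiction. Assume a minimizing sequence $(u_n)$ with $\langle L u_n,u_n\rangle\to\lambda_{1}$. Since $|\int(K*u_n)u_n\,dx|\leq\sigma^2$, the quantity $\int W u_n^2\,dx$ is bounded, which by the confining hypothesis yields tightness of $(u_n^2\,dx)$. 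Extract a weak limit $u_n\rightharpoonup u^\star$ in $L^2$. Because $K\in L^2$, Cauchy--Schwarz gives the pointwise bound $|K*u_n(x)|\leq\|K\|_{L^2}$, while weak convergence gives $K*u_n(x)\to K*u^\star(x)$ pointwise; combining this with tightness I would pass to the limit in the convolution term to get $\int(K*u_n)u_n\,dx\to\int(K*u^\star)u^\star\,dx$, and use Fatou's lemma for the potential term. This yields $\langle Lu^\star,u^\star\rangle\leq\lambda_{1}\|u^\star\|_{L^2}^2$, so by Theorem \ref{thm:eigen}(iii) either $u^\star=0$ or $u^\star$ is a multiple of $\phi$; the orthogonality constraint forces $u^\star=0$, hence $\int W u_n^2\,dx\to\lambda_{1}$, contradicting $W\geq 0$ together with the strict negativity $\lambda_{1}\leq -b_\epsilon<0$ from Theorem \ref{thm:eigen}(i). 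Setting $a:=\lambda_{2}-\lambda_{1}>0$, the energy identity $\frac{d}{dt}\|w\|_{L^2}^2 = -2\langle(L-\lambda_1)w,w\rangle\leq -2a\|w\|_{L^2}^2$ (justified for $w_0\in D(L)$ and then extended by density) yields $\|w(t,\cdot)\|_{L^2}\leq{\rm e}^{-at}\|w_0\|_{L^2}$.

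\textbf{Transfer to the other spaces.} To upgrade from $L^2$ to $E\in\{L^p(\R^N),C_0(\R^N),\mathcal{M}(\R^N)\}$, I would view the semigroup generated by $-(L-\lambda_1) = K*-(W-\lambda_1)$ as a bounded perturbation of the multiplication semigroup $e^{-t(W-\lambda_1)}$, which is a (quasi-)contraction on each such $E$ because $W$ is bounded below. The Dyson--Phillips/Duhamel expansion combined with the elementary estimates $\|K*f\|_{L^\infty}\leq\|K\|_{L^2}\|f\|_{L^2}$ and $\|K*f\|_{L^2}\leq\|K\|_{L^2}\|f\|_{L^1}$ (both consequences of $K\in L^2$, with dual versions for measures) produces smoothing bounds of the form $\|S(t_0)u_0\|_{L^2}\leq C(t_0)\|u_0\|_E$ and dually $\|S(t_0)u_0\|_E\leq C(t_0)\|u_0\|_{L^2}$ for any fixed $t_0>0$. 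Chaining these with the $L^2$ decay on the intermediate interval yields the desired exponential decay in $E$, at the cost of adjusting the multiplicative constant $C$ while preserving the rate $a$.

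\textbf{Main obstacle.} The hard part is the spectral gap step: the convolution operator $K*$ is not compact on $L^2(\R^N)$ and the resolvent of $L$ is not obviously compact, so the classical min-max/compactness route to the isolation of $\lambda_1$ is unavailable. What rescues the argument is the threefold combination of (i) tightness of a minimizing sequence from the confining potential, (ii) the pointwise bound on $K*u_n$ inherited from $K\in L^2$, and (iii) the strict sign $\lambda_1<0$ supplied by Assumption \ref{ass:linkJW}; without (iii) the contradiction at the end of the argument would collapse.
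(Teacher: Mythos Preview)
Your reduction and your $L^2$ spectral-gap argument are correct, and the latter is genuinely different from the paper's route. The paper never computes $\lambda_2$ variationally; instead it proves that the semigroup $(T_t)$ is quasi-compact in $L^2$ by showing (via the Riesz--Fr\'echet--Kolmogorov criterion) that the Duhamel remainder $u_0\mapsto\int_0^t T_{t-\tau}(K*S_\tau u_0)\,d\tau$ is a compact operator, so that $\omega_{\mathrm{ess}}(A)\leq\lambda_1<0$, and then invokes the general fact that the spectrum above $\omega_{\mathrm{ess}}$ consists of finitely many eigenvalues of finite multiplicity. Your contradiction argument---using weak sequential continuity of $u\mapsto\int(K*u)u$ on $L^2(1+W)$ together with the sign $\lambda_1<0$ to rule out the weak limit $u^\star=0$---is a neat self-contained alternative that avoids the semigroup machinery.

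The transfer step, however, has a real gap: the claimed smoothing bounds $\|T_{t_0}u_0\|_{L^2}\leq C(t_0)\|u_0\|_E$ and $\|T_{t_0}u_0\|_E\leq C(t_0)\|u_0\|_{L^2}$ are in general \emph{false}. The obstruction is the multiplicative piece of the Duhamel decomposition: for any $t_0>0$ one has $T_{t_0}u_0=e^{-(W-\lambda_1)t_0}u_0+(\text{integral term})$, and multiplication by the bounded function $e^{-(W-\lambda_1)t_0}$ neither improves local integrability nor creates continuity. Thus an $L^1$ (or $\mathcal M$) datum with a singularity in $L^1\setminus L^2$ keeps it for all time, so $T_{t_0}\colon L^1\to L^2$ fails; dually, $T_{t_0}\colon L^2\to C_0$ fails because $e^{-(W-\lambda_1)t_0}u_0$ need not be continuous. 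The convolution estimates you cite are correct, but they smooth only the integral term in Duhamel, never the leading one. (Even the favourable-looking direction $T_{t_0}\colon L^2\to L^1$, which would need $e^{-Wt_0}\in L^2$, is not guaranteed by the bare confining hypothesis $W(x)\to+\infty$.) The paper bypasses smoothing entirely: it proves quasi-compactness of $(T_t)$ \emph{directly in each target space}---via Riesz--Fr\'echet--Kolmogorov in $L^p$ and Ascoli in $C_0$---then obtains $\mathcal M$ by duality from $C_0$, and the remaining $L^p$ by interpolation between $L^1$ and $L^2$ (plus duality for $p>2$).
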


\medskip

The next result deals with the nonlinear replicator mutator model \eqref{eq:rep-mut}.
This equation is concerned with the evolution of probability distributions, so that we only consider an initial data $u_{0}\in E$ which is nonnegative and such that $\langle u_{0},\1\rangle=1$, where $\1$ stands for the constant function equal to $1$ over $\R^N$.

\begin{theorem}[Long time dynamics, replicator-mutator model]\label{thm:rep-mut}
Let Assumptions \ref{ass:kernel}, \ref{ass:J-symmetric}, \ref{ass:potential} and \ref{ass:linkJW} hold.
Then there exists $a>0$ such that, for any $u_{0}\in E$ with $u_{0} \geq 0$ and $\langle u_{0},\1\rangle=1$, there is a constant $C(u_{0})>0$ for which the solution $u=u(t,x)$ of \eqref{eq:rep-mut} starting from $u_{0} = u_{0}(x)$ (see Section \ref{sec:rep-mut} for the precise definition) satisfies, for all $t > 0$,
$$
\left\| u(t,\cdot)-\frac{\phi}{\langle\phi,\1\rangle}\right\|_{E} \leq C(u_{0}) \, {\rm e}^{-at}.
$$
Moreover, in the case $E = L^1(\R^N)$ or $E = \MM(\R^N)$, there exists a constant $c_{0} > 0$ such that for any $u_{0} \in E$ with $u_{0} \geq 0$ and $\langle u_{0},\1\rangle=1$, we can take 
$$C(u_{0}) = c_{0}\left\|\frac{u_{0}}{\langle u_{0},\phi \rangle} - \phi\right \|_{E}.$$
\end{theorem}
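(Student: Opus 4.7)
The plan is to reduce the nonlinear Cauchy problem~\eqref{eq:rep-mut} to the linear one~\eqref{eq:parabolique} by the classical normalization trick, and then translate the convergence statement of Theorem~\ref{thm:asympto} into a convergence statement for the normalized solution by an elementary algebraic manipulation. Concretely, let $v$ be the solution of~\eqref{eq:parabolique} with initial datum $v_{0}=u_{0}$. Since the linear semigroup generated by $-L$ preserves nonnegativity (which follows from Duhamel's formula applied to $\partial_t v + (\sigma^{2}+W)v = \sigma^{2}J*v$ and an iteration on $\max(-v,0)$), the quantity $\alpha(t):=\langle v(t,\cdot),\1\rangle$ is strictly positive for all $t\geq 0$, and setting $u(t,\cdot):=v(t,\cdot)/\alpha(t)$, one checks by direct differentiation, using $\int_{\R^N}(J*v-v)\,dx = 0$, that $u$ solves~\eqref{eq:rep-mut} with $\overline W[u](t) = \langle W v(t,\cdot),\1\rangle/\alpha(t)$; this matches the definition of a solution that is spelled out in Section~\ref{sec:rep-mut}.

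The heart of the proof is then a direct application of Theorem~\ref{thm:asympto}. Writing
\[
v(t,\cdot) = e^{-\lambda_{*}t}\bigl(\langle u_{0},\phi\rangle\,\phi + R(t)\bigr),\qquad \|R(t)\|_{E}\leq C\,e^{-at}\,\bigl\|u_{0}-\langle u_{0},\phi\rangle\,\phi\bigr\|_{E},
\]
we obtain $\alpha(t)=e^{-\lambda_{*}t}\bigl[\langle u_{0},\phi\rangle\langle\phi,\1\rangle+\langle R(t),\1\rangle\bigr]$, and the exact identity
\[
u(t,\cdot)-\frac{\phi}{\langle\phi,\1\rangle}
=\frac{\langle\phi,\1\rangle\,R(t)\;-\;\phi\,\langle R(t),\1\rangle}{\langle\phi,\1\rangle\bigl[\langle u_{0},\phi\rangle\langle\phi,\1\rangle+\langle R(t),\1\rangle\bigr]}.
\]
Because $\phi>0$, $u_{0}\geq 0$ and $\langle u_{0},\1\rangle=1$, we have $\langle u_{0},\phi\rangle>0$; since moreover $\langle R(t),\1\rangle\to 0$, the denominator is bounded below by $\tfrac12\langle\phi,\1\rangle^{2}\langle u_{0},\phi\rangle$ for all $t\geq T_{0}$ large enough, and taking $\|\cdot\|_{E}$ in the identity produces the claimed $e^{-at}$ decay. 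For $t\in[0,T_{0}]$ one simply absorbs the (finite, continuous) quantities $\|u(t,\cdot)-\phi/\langle\phi,\1\rangle\|_{E}$ into the constant $C(u_{0})$.

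The sharper, quantitative form in the cases $E=L^{1}(\R^{N})$ or $E=\MM(\R^{N})$ follows from the trivial duality bound $|\langle R(t),\1\rangle|\leq \|R(t)\|_{E}$, which plugged into the identity above yields
\[
\bigl\|u(t,\cdot)-\phi/\langle\phi,\1\rangle\bigr\|_{E}\leq c_{0}\,e^{-at}\,\Bigl\|\tfrac{u_{0}}{\langle u_{0},\phi\rangle}-\phi\Bigr\|_{E},
\]
with a constant $c_{0}$ independent of $u_{0}$; in the other spaces, $R(t)$ still lies in $L^{1}(\R^{N})$ (because $u_{0}\in L^{1}(\R^{N})$ is implicit in $\langle u_{0},\1\rangle = 1$), but the pairing $\langle R(t),\1\rangle$ can only be controlled in a $u_{0}$-dependent fashion. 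The main obstacle in this proof is not any single step but the bookkeeping that glues together the short-time regime (where $\alpha(t)$ is merely positive) and the large-time regime (where the algebraic identity gives a clean bound) into a single constant $C(u_{0})$; the key point is that the strict positivity $\langle u_{0},\phi\rangle>0$ is used crucially to make the denominator uniformly bounded below, and it is precisely this positivity that forces $C(u_{0})$ to blow up when $u_{0}$ approaches the direction orthogonal to $\phi$, as reflected in the factor $1/\langle u_{0},\phi\rangle$ in the quantitative estimate.
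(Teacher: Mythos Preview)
Your approach is essentially the paper's: represent the nonlinear solution as the normalized linear solution, write an algebraic identity for $u(t)-\phi/\langle\phi,\1\rangle$, and feed Theorem~\ref{thm:asympto} into it. The paper carries out the normalization step more carefully (Proposition~\ref{prop:wellposed_rep-mut} treats classical versus mild solutions), but conceptually it is the same reduction.

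There is, however, a genuine gap in your derivation of the \emph{uniform} constant $c_0$ in the $L^1/\MM$ case. Your identity has denominator $\langle\phi,\1\rangle\bigl[\langle u_0,\phi\rangle\langle\phi,\1\rangle+\langle R(t),\1\rangle\bigr]=\langle\phi,\1\rangle\langle T_tu_0,\1\rangle$, and you only bound it from below for $t\geq T_0$ with $T_0$ depending on $u_0$; you then assert that the duality bound $|\langle R,\1\rangle|\leq\|R\|_E$ makes $c_0$ independent of $u_0$, but this does not follow from what you wrote, since the small-time regime still carries a $u_0$-dependence through $T_0$. The paper avoids this by writing the numerator differently,
\[
\frac{T_tu_0}{\langle T_tu_0,\1\rangle}-\frac{\phi}{\langle\phi,\1\rangle}
=\frac{\langle\langle u_0,\phi\rangle\phi-T_tu_0,\1\rangle\,T_tu_0+\langle T_tu_0,\1\rangle\bigl(T_tu_0-\langle u_0,\phi\rangle\phi\bigr)}{\langle T_tu_0,\1\rangle\,\langle\phi,\1\rangle\,\langle u_0,\phi\rangle},
\]
so that after taking norms the factor $\|T_tu_0\|_E/\langle T_tu_0,\1\rangle$ appears; for $E=L^1$ or $E=\MM$ this ratio is identically $1$ (positivity), and the uniform constant drops out immediately. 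Your identity can also be salvaged, for instance via the uniform-in-$t$ bound $\langle T_tu_0,\1\rangle\geq\langle T_tu_0,\phi\rangle/\|\phi\|_{L^\infty}=\langle u_0,\phi\rangle/\|\phi\|_{L^\infty}$, but as written the step is incomplete.
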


\medskip

Our last main result provides a quantitative estimate of the rate of convergence $a$ appearing in Theorems \ref{thm:asympto} and \ref{thm:rep-mut}.
To do so we need to further assume that $W$ admits a unique global minimum, which we can choose w.l.o.g. to be the origin.

\begin{assumption}\label{ass:W-fort}
The potential $W:\R^N\dans\R$ is a continuous function which satisfies
\begin{enumerate}
\item $W(x)\to +\infty$ as $|x| \to +\infty$.
\item $W(0)=0$.
\item For all $x\neq0$, $W(x)>0$.
\end{enumerate}
\end{assumption}

Then we consider a convex, open and bounded set $\Omega\subset\R^N$ such that $0\in\Omega$, and we define
\[\eta:=\esssup_{2\Omega}K-\essinf_{2\Omega}K,\]
where $2\Omega=\{x\in\R^N:x/2\in\Omega\}$.
We also define the function $\Phi:[0,+\infty)\to\R\cup\{+\infty\}$ by
\begin{equation}\label{def:Phi}
\Phi(\xi):=\min\Big\{\sigma^2-\xi,\eta\meas(\Omega)+a_1\xi+a_2\sqrt{\xi}\Big\},
\end{equation}
where
\[a_1:=\Big(\essinf_{2\Omega}K\Big)\bigg(\int_{\Omega^c}\frac1W\bigg)\quad\text{and}\quad a_2:=2\sigma\sqrt{\sup_{x\in\R^N}\int_{\Omega^c}\frac{K(x-y)}{W(y)}dy},\]
and we denote by $\bar\Phi\in(0,+\infty]$ the largest value of $\Phi$
\[\bar\Phi:=\sup_{\xi\geq0}\Phi(\xi).\]

\begin{theorem}[Spectral gap and rate of convergence]\label{thm:spectral-gap}
Let Assumptions~\ref{ass:kernel}, \ref{ass:J-symmetric}, \ref{ass:linkJW} and \ref{ass:W-fort} hold.
Then, for any eigenvalue $\lambda$ of the operator $L$ in $L^2(\R^N)$, we have
\[\lambda\neq\lambda_1\quad\implies\quad \lambda\geq-\bar\Phi.\]
As a consequence, if the sets $\Omega$ and $B_\epsilon$ are such that
\[a_*:=b_\epsilon-\bar\Phi>0\]
then the convergences in Theorems~\ref{thm:asympto} and~\ref{thm:rep-mut} hold in $L^2(\R^N)$ for any $a<a_*$.
\end{theorem}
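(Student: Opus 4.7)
Because $L$ is self-adjoint on $L^2(\R^N)$ under Assumption~\ref{ass:J-symmetric} and $\lambda_1$ is simple with ground state $\phi$ (Theorem~\ref{thm:eigen}), any eigenvalue $\lambda\neq\lambda_1$ admits an eigenfunction $u\in D(L)=L^2(1+W)$ orthogonal to $\phi$, which I normalise so that $\|u\|_{L^2}=1$. Writing
\[\lambda=\langle Lu,u\rangle=-\langle K*u,u\rangle+\int_{\R^N} W(x)\,u(x)^2\,dx,\]
the problem reduces to showing $\langle Lu,u\rangle\geq-\bar\Phi$ for all such $u$. My plan is to produce a single scalar $\xi=\xi(u)\geq 0$ and to establish the two lower bounds
\[\langle Lu,u\rangle\geq\xi-\sigma^2\qquad\text{and}\qquad\langle Lu,u\rangle\geq-\eta\meas(\Omega)-a_1\xi-a_2\sqrt\xi;\]
taking the maximum yields $\langle Lu,u\rangle\geq-\Phi(\xi)\geq-\bar\Phi$, since $\Phi(\xi)\leq\bar\Phi$ by the very definition of $\bar\Phi$ as a supremum.

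The natural choice is $\xi:=\int_{\Omega^c}W\,u^2$, finite since $u\in D(L)$. The first bound is immediate from Young's estimate $\langle K*u,u\rangle\leq\|K\|_{L^1}\|u\|_{L^2}^2=\sigma^2$ combined with $\int W u^2\geq\xi$. For the second, I split $u=u\mathbf{1}_\Omega+u\mathbf{1}_{\Omega^c}$ and expand $\langle K*u,u\rangle$ into the three bilinear blocks allowed by the evenness of $K$. The $\Omega\times\Omega$ block uses that $x-y\in 2\Omega$ for $x,y\in\Omega$ (from convexity of $\Omega$ and $0\in\Omega$) to write $K(x-y)=(\essinf_{2\Omega}K)+R(x,y)$ with $R\in[0,\eta]$ on $2\Omega$; the principal term $(\essinf_{2\Omega}K)(\int_\Omega u)^2$ is controlled by the orthogonality $\int u\phi=0$ followed by Cauchy-Schwarz against the weight $1/W$ on $\Omega^c$, delivering the $a_1\xi$ contribution, while the remainder $R$ produces the $\eta\meas(\Omega)$ piece via $(\int_\Omega|u|)^2\leq\meas(\Omega)\int_\Omega u^2\leq\meas(\Omega)$. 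The mixed $\Omega\times\Omega^c$ and purely-$\Omega^c$ blocks are estimated by Cauchy-Schwarz with the weight $\sqrt{W(y)}$ paired with $u$; the dual factor $1/\sqrt{W(y)}$ multiplying $K(x-y)$ produces, through $\sup_x\int_{\Omega^c}K(x-y)/W(y)\,dy$, exactly the $a_2\sqrt\xi$ term.

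The main obstacle is this transfer in the $\Omega\times\Omega$ block: the orthogonality $\int u\phi=0$ is taken against the variable ground state $\phi$ rather than against $\mathbf{1}_\Omega$, so one must exploit the strict positivity and continuity of $\phi$ on $\overline\Omega$ (again from Theorem~\ref{thm:eigen}) to convert the estimate on $|\int_\Omega u\phi|=|\int_{\Omega^c}u\phi|$ into a usable bound on $(\int_\Omega u)^2$, while the evenness of $K$ is needed to symmetrise the bilinear form and to recognise the integrals defining $a_1,a_2$ in their stated form. Once (i) and (ii) are combined, the spectral gap $\lambda_2-\lambda_1\geq b_\epsilon-\bar\Phi=a_*$ follows from $\lambda_1\leq-b_\epsilon$ (Theorem~\ref{thm:eigen}(i)). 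The $L^2$-convergence in Theorems~\ref{thm:asympto} and~\ref{thm:rep-mut} at any rate $a<a_*$ is then a standard consequence of the spectral decomposition of the self-adjoint semigroup $e^{-tL}$: writing $u_0=\langle u_0,\phi\rangle\phi+P^\perp u_0$ with $P^\perp$ the orthogonal projection onto $\phi^\perp$, the $P^\perp$-component of $e^{-tL}u_0$ decays like $e^{-\lambda_2 t}$, i.e.\ by a factor $e^{-(\lambda_2-\lambda_1)t}\leq e^{-a_* t}$ faster than the ground-state mode; the nonlinear conclusion is recovered by the same decomposition already used in the proof of Theorem~\ref{thm:rep-mut}.
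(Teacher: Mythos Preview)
Your outline tracks the paper's closely, but there is a genuine gap in the $\Omega\times\Omega$ block. The bound you need there is
\[\Big(\int_\Omega u\Big)^2=\Big(\int_{\Omega^c} u\Big)^2\leq\Big(\int_{\Omega^c}\frac{1}{W}\Big)\Big(\int_{\Omega^c}Wu^2\Big),\]
which comes from $\int_\Omega u=-\int_{\Omega^c}u$, i.e.\ from the mean-zero condition $\int_{\R^N}u=0$. What you actually have is $\langle u,\phi\rangle=0$, which only gives $\int_\Omega u\phi=-\int_{\Omega^c}u\phi$. There is no way to transfer control of $\int_\Omega u\phi$ to control of $\int_\Omega u$ for sign-changing $u$ by exploiting positivity and continuity of $\phi$: one can easily arrange $\int_\Omega u\phi=0$ while $\int_\Omega u$ is as large as one likes. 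Any attempt via ratios like $\max_\Omega\phi/\min_\Omega\phi$ would contaminate the coefficient $a_1$ with $\phi$-dependent constants, contradicting the explicit form of $\Phi$ in the statement.

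The paper's fix is to prove the functional inequality $\langle -Lu,u\rangle\leq\Phi(\xi)\|u\|_{L^2}^2$ under the hypothesis $\int_{\R^N}u=0$, not under $\langle u,\phi\rangle=0$. Then, for an eigenfunction $\psi$ at $\lambda\neq\lambda_1$ with $\int\psi\neq0$, one sets $\tilde\psi:=\psi+\alpha\phi$ with $\alpha$ chosen so that $\int\tilde\psi=0$, applies the inequality to $\tilde\psi$, and uses $\langle\psi,\phi\rangle=0$ together with $\lambda\geq\lambda_1$ to get
\[-\lambda\big(\|\psi\|_{L^2}^2+\alpha^2\big)\leq-\lambda\|\psi\|_{L^2}^2-\lambda_1\alpha^2=\langle-L\tilde\psi,\tilde\psi\rangle\leq\bar\Phi\big(\|\psi\|_{L^2}^2+\alpha^2\big),\]
whence $\lambda\geq-\bar\Phi$. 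This shift-by-$\phi$ trick is the missing ingredient. A smaller point on the second part: the eigenvalue bound alone does not give decay on $\phi^\perp$; you also need that the essential spectrum of $-L+\lambda_1$ lies at or below $\lambda_1$ (quasi-compactness of the semigroup), so that the spectral bound on $(\R\phi)^\perp$ is governed by eigenvalues and equals the growth bound. Your ``spectral decomposition'' remark hides this step.
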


\medskip

Let us now comment on the above assumptions and results.

\begin{enumerate}
\item Assumption~\ref{ass:linkJW} is the crucial condition which ensures, when $J$ is even, that the ground state is a function and not a singular measure.
It relaxes the existing criteria, the mildest one in the literature being, to the best of our knowledge, the one proposed by F. Li, J. Coville \& X. Wang in~\cite{Li-Cov-Wan} (see Condition~(2.3) in Theorem~2.1).
In the case when $\meas[W=0]=0$, their condition can be written in the following form (see Appendix~\ref{app:ex1} for the proof of the equivalence).
\begin{assumption}\label{ass:Coville}
There exist $\epsilon>0$ and a Borel set $B\subset\R^N$ such that
\begin{equation*}\label{hyp:Coville}
\sigma^2\essinf_{x\in B_\epsilon}\int_{B_\epsilon}\frac{J(x-y)}{W(y)}\,dy>1
\end{equation*}
where we recall the notation $B_\epsilon=B\cap[W\geq\epsilon]$.
\end{assumption}
Note that this condition can also be proved to be sufficient from the paper \cite{Bur-88} by R. Bürger, see \cite[p.\,250, {\it Note added in proof.}]{Bur-Bom}. Clearly, Assumption~\ref{ass:Coville} implies  Assumption~\ref{ass:linkJW}, but the converse is false as shown by the following example.

\begin{example}\label{ex:1}
Consider the one dimensional space $\R^N=\R$ and
\[J(z)=\frac12\1_{[-1,1]}(z)\qquad\text{and}\qquad W(x)=\sqrt{|x|}.\]
Then Assumption~\ref{ass:Coville} is satisfied if and only if $\sigma^2>\frac 1{\sqrt 2}$, while Assumption~\ref{ass:linkJW} is verified as soon as $\sigma^2>\frac{4}{4+\pi}$.
We refer to Appendix~\ref{app:ex1} for a proof of this claim.
\end{example}

The question whether Assumption~\ref{ass:linkJW} is enough for ensuring the existence of a first eigenfunction without the evenness condition on $J$ is still an open question.

Finally, let us also mention that the condition
\[\sigma^2\esssup_{x\in \R^N}\int_{\R^N}\frac{J(x-y)}{W(y)}\,dy<1\]
guarantees that the ground state is a singular measure (with atoms), and consequently no principal eigenfunction exists, see~\cite{Bur-Bom}.

\item Since the work of S. Mischler and J. Scher~\cite{Mis-Sch} in 2016, quantifying the spectral gap of non-local and non-conservative linear equations is an active field of research, see~\cite{Ban-Clo-Gab-Mar,Can-Gab-Yol,Clo-Gab-21,Clo-Gab-20,Gab-Mar}.
To our knowledge, the result in Theorem~\ref{thm:spectral-gap} is the first quantified spectral gap result in the literature for Equation~\eqref{eq:parabolique}.
For some particular choices of coefficients, as the one in the following example, it provides an estimate of the spectral gap.

\begin{example}\label{ex:2}
Consider the one dimensional space $\R^N=\R$ and 
\[J(z)=\frac14\1_{[-2,2]}(z)\qquad\text{and}\qquad W(x)=|x|^m\ (m>1).\]
Then the lower bound $a_*$ on the spectral gap satisfies
\[a_*\geq\sigma^2\left(\frac14-\frac{\sigma^2}{m-1}-\sqrt 2\frac{\sigma}{\sqrt{m-1}}\right)\]
and is thus positive for $\sigma^2$ small enough or $m$ large enough.
We refer to Appendix~\ref{app:ex2} for a proof of this estimate.
\end{example}
\end{enumerate}

\medskip

Assumptions~\ref{ass:kernel} and \ref{ass:potential} are supposed to be verified throughout the paper, while the symmetry of $J$, that is Assumption~\ref{ass:J-symmetric}, is assumed only in Sections~\ref{sec:L}, \ref{sec:asympto} and \ref{sec:rep-mut}, Section~\ref{sec:non-sym} being devoted to the non-symmetric case.
The remainder of the paper is organized as follows. In Section \ref{sec:L} we gather some results concerning the operator $L$:
we give a strong maximum principle for this operator, we prove Theorem~\ref{thm:eigen} on the eigenvalue problem \eqref{eq:eigenvalue}, and we establish a functional inequality which yields the first part of Theorem~\ref{thm:spectral-gap} about the quantification of the spectral gap.
The long time behaviour of the linear problem \eqref{eq:parabolique}, see Theorem \ref{thm:asympto} and the second part of Theorem~\ref{thm:spectral-gap}, is studied in Section \ref{sec:asympto}. 
Theorem \ref{thm:rep-mut} on the replicator-mutator model \eqref{eq:rep-mut} is proved in Section \ref{sec:rep-mut}.
Finally, in Section~\ref{sec:non-sym}, we give a new proof of known results when $J$ is not assumed to be even.



\section{The operator $L$}\label{sec:L}

In this section, we investigate some remarkable properties of the operator $L$, defined in~\eqref{eq:Def-L}.

\subsection{A strong maximum principle}\label{sec:MaxPrinciple}

In the context of an elliptic second order equation such as
$$-\sum_{i,j=1}^N \partial_{i}(a_{ij}(x)\partial_{j}u) + (c(x) + \lambda) u = f \qquad \text{in }\, \R^N,$$
where the matrix $(a_{ij})_{1\leq i,j \leq N}$ is uniformly coercive and $a_{i,j},c \in L^\infty(\R^N)$, while for instance $f \in C_{0}(\R^N)\cap L^2(\R^N)$, it is well known that if $f \geq 0$ and $f \not\equiv 0$ then $u > 0$ in $\R^N$, provided $c(x) +\lambda > 0$ a.e. in $\R^N$. The nonlocal operator $L$ defined above in \eqref{eq:Def-L} satisfies an analogous strong maximum principle.

\begin{lemma}\label{lem:Max-principle} 
Assume that Assumptions \ref{ass:kernel} and \ref{ass:potential} are satisfied.
Let $\lambda > \sigma^2$ and $f \in L^p(\R^N)$ such that $ f \geq 0$ and $f \not\equiv 0$, with $1 \leq p \leq \infty$. Let $u \in L^p(\R^N)$ satisfy
$$L u + \lambda u = -K*u + (W(x)+ \lambda)u = f.$$
Then $u > 0$ a.e. on $\R^N$.
\end{lemma}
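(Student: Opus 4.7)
\medskip

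The strategy is to rewrite the equation in fixed-point form and exploit the fact that, under the hypothesis $\lambda>\sigma^2$, the natural iteration operator is a strict contraction on $L^p$. This will give nonnegativity of $u$ by monotonicity of a Neumann series; then a propagation argument based on the pointwise positivity of $J$ near the origin will upgrade nonnegativity to strict positivity almost everywhere.

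Since $W\geq0$ and $\lambda>\sigma^2>0$ a.e., the multiplier $(W+\lambda)^{-1}$ lies in $L^\infty(\R^N)$ with norm $\leq 1/\lambda$. Setting
\[
T u := \frac{K*u}{W+\lambda}, \qquad g := \frac{f}{W+\lambda},
\]
Young's inequality combined with $\|K\|_{L^1}=\sigma^2$ yields $\|Tu\|_{L^p}\leq(\sigma^2/\lambda)\|u\|_{L^p}$, so $T\in\LL(L^p)$ has norm strictly less than $1$. The assumed equation is exactly $(I-T)u=g$, so $u=\sum_{n\geq0}T^ng$ as a convergent series in $L^p$. Because $K\geq0$, $W+\lambda>0$, and $g\geq0$, every term $T^ng$ is nonnegative, hence $u\geq0$ a.e. Moreover $g\not\equiv0$ (since $f\not\equiv0$), so $u\not\equiv0$.

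To promote $u\geq0$ to $u>0$, consider $Z:=\{x\in\R^N:u(x)=0\}$ and suppose $|Z|>0$. The pointwise identity $(W(x)+\lambda)u(x)=f(x)+(K*u)(x)$ forces, for a.e.\ $x\in Z$, both $f(x)=0$ and $(K*u)(x)=\sigma^2\int J(x-y)u(y)\,dy=0$. Since $u\geq0$ and $J(x-y)>0$ for a.e.\ $y\in B(x,r_0)$ by Assumption~\ref{ass:kernel}, we deduce $u=0$ a.e.\ on $B(x,r_0)$. In other words, at every Lebesgue density point $x_0$ of $Z$, the whole ball $B(x_0,r_0)$ is contained in $Z$ up to a null set. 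A standard iteration, re-applying the statement at density points of $Z\cap B(x_0,r_0)$, enlarges the ball to $B(x_0,nr_0)$ for every integer $n\geq 1$, yielding $|Z^c|=0$. But then $u=0$ a.e., which together with the equation forces $f=0$ a.e., contradicting $f\not\equiv0$. Consequently $|Z|=0$ and $u>0$ a.e.

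The only delicate step is the propagation argument in the last paragraph: one must justify the measure-theoretic manipulations carefully, treating density points of $Z$ and keeping track of the (countable union of) null sets introduced at each stage. Everything else is essentially a one-line application of the Neumann series and of Young's inequality, so the contraction condition $\sigma^2/\lambda<1$ is exactly what makes the approach work.
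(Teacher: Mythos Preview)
Your argument is correct, and the overall strategy (first nonnegativity, then propagation to strict positivity) matches the paper's, but the execution differs in both halves.

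For the nonnegativity step, the paper splits into cases: when $1\leq p<\infty$ it multiplies the equation by $(u^-)^{p-1}\1_{[u<0]}$, integrates, and uses Young's inequality to force $u^-\equiv0$; when $p=\infty$ it argues with the essential infimum of $u$. Your Neumann-series argument is different and arguably cleaner: by recasting the equation as $(I-T)u=g$ with $\|T\|_{L^p\to L^p}\leq\sigma^2/\lambda<1$, you get $u=\sum_{n\geq0}T^ng\geq0$ uniformly in $p$, with no case distinction. This also makes explicit why the threshold $\lambda>\sigma^2$ is exactly the contraction condition.

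For the strict positivity step, the paper avoids the measure-theoretic iteration you flag as delicate by introducing the \emph{continuous} function $U(x):=\int_{B(0,r_0)}u(x-y)\,dy$ and showing that the closed set $[U=0]$ is also open (hence empty, by connectedness of $\R^N$ and $u\not\equiv0$); positivity of $u$ then follows from the pointwise inequality $u(x)\geq(W(x)+\lambda)^{-1}\int_{B(0,r_0)}K(y)u(x-y)\,dy$. Your density-point propagation can indeed be made rigorous (a Fubini argument shows that if $u=0$ a.e.\ on $B(x_0,r_0)$ then $u=0$ a.e.\ on $B(x_0,2r_0)$, and one iterates), but the paper's device of passing to a continuous auxiliary function sidesteps the bookkeeping of null sets entirely and is worth knowing.
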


\begin{proof}
First, assuming that $1 \leq p < \infty$, we show that $u \geq 0$. Writing $u = u^+ - u^-$ with $u^+ := \max(u,0)$, $u^-:=\max(0,-u)$, we have 
$$K*u^- + (W(x)+ \lambda)u = f + K*u^+ \geq 0.$$
Multiplying this by $(u^-)^{p-1}\1_{[u < 0]}$ and integrating we get
$$\int_{\R^N}(K*u^-)(x)(u^-)^{p-1}\1_{[u < 0]}(x)dx - \int_{\R^N}(W(x) + \lambda)|u^-(x)|^{p}dx \geq 0.$$
However, using H\"older's inequality with $K*u^- \in L^p(\R^N)$ and $(u^-)^{p-1}\1_{[u < 0]} \in L^{p'}(\R^N)$ where we denote $p' := p/(p-1)$, we have 
\begin{eqnarray*}
\int_{\R^N}(K*u^-)(x)(u^-)^{p-1}\1_{[u < 0]}(x)dx & \leq \|K*u^-\|_{L^p}\|(u^-)^{p-1}\1_{[u < 0]}\|_{L^{p'}} \\
&\leq \|K\|_{L^1}\|u^-\|_{L^p}^p = \sigma^2\|u^-\|_{L^p}^p,
\end{eqnarray*}
and thus from the previous inequality we infer that
$$\int_{\R^N} (W(x) + \lambda - \sigma^2)|u^-(x)|^pdx \leq 0,$$
that is, since $W \geq 0$ and $\lambda > \sigma^2$, we have $u^- \equiv 0$, and thus $u = u^+ \geq 0$. 

Next assume that $p = \infty$, and denote by $m$ the essential infimum of $u$, that is
$$m := \essinf\limits_{x \in \R^N} u(x).$$
Since $K \geq 0$, we deduce that, for  a.e. $x \in \R^N$, we have
$$\int_{\R^N}K(x-y)u(y)dy \geq m \int_{\R^N}K(x-y)dy = m\,\sigma^2,$$
and thus 
$$(W(x) + \lambda)u(x) \geq f(x) + \int_{\R^N}K(x-y)u(y)dy \geq m\,\sigma^2.$$
Since $W \geq 0$ and $\lambda > \sigma^2$, this inequality implies that $m\geq 0$. Indeed, if $m < 0$, taking a sequence $(x_{n})_{n \geq 1}$ such that $u(x_{n}) \to m$
 as $n \to \infty$ and $m \leq u(x_{n+1}) \leq u(x_{n})$, then for $n$ large enough so that $u(x_{n}) < 0$ and
$$0 \leq u(x_{n}) - m \leq \frac{1}{2\lambda}(\lambda - \sigma^2)|m|,$$ 
we would have
$$0 > -\frac 12 (\lambda-\sigma^2)\vert m\vert> (\lambda - \sigma^2)m + \lambda (u(x_{n}) - m) \geq - W(x_{n})u(x_{n}) \geq 0,$$
which is a contradiction. Thus $m \geq 0$, that is $u \geq 0$ in $\R^N$.
\medskip

Now, in order to show that $u > 0$ almost everywhere, we introduce the continuous function $U:\R^N\to\R$ defined by
\[U(x)=\int_{B(0,r_0)}u(x-y)\,dy\]
where $r_0$ is defined in item 2 of Assumption \ref{ass:kernel}, and we consider the closed set $[U = 0]$.
If this set were not empty, then we may take $x_{0} \in \R^N$ such that $U(x_{0}) = 0$.
Since $u \geq 0$, we would have $u(x)=0$ for almost very $x\in B(x_0,r_0)$ and accordingly, by using Tonelli's theorem,
\begin{align*}
0 = \int_{B(x_0,r_0)}(W(x) + \lambda)u(x)\,dx &= \int_{B(x_0,r_0)}(f(x) + K*u(x))\,dx\\
& \geq \int_{\R^N} K(y)U(x_0•-y)\,dy \geq \int_{B(0,r_{0})} K(y)U(x_0-y)\,dy\geq0.
\end{align*}
Since $K>0$ a.e. on $B(0,r_0)$, we deduce that $U(x)=0$ for all $x\in B(x_0,r_0)$, that is whenever $x_{0} \in [U = 0]$ we have also $B(x_{0},r_{0}) \subset [U = 0]$. 
This means that the closed set $[U = 0]$ is also open, and $\R^N$ being a connected set, we infer that either the set $[U = 0] $ is empty, or it is all of $\R^N$.
The latter would imply $u\equiv0$, which is ruled out since $f\not\equiv 0$.
Thus $[U = 0] = \emptyset$, that is $U > 0$ on $\R^N$.
Since $K>0$ a.e. on $B(0,r_0)$, this necessarily implies that $\int_{B(0,r_0)}K(y)u(x-y)\,dy>0$ for all $x\in\R^N$,
and consequently
$$u(x)\geq\frac{1}{\lambda+W(x)}\int_{B(0,r_0)}K(y)u(x-y)\,dy>0,$$
and the strong maximum principle is proved.
\end{proof}

\subsection{The eigenvalue problem}\label{sec:eigen}

In this section we prove existence and uniqueness of a principal eigenfunction to \eqref{eq:eigenvalue}, that is we prove  Theorem \ref{thm:eigen}.
Before going further, recall the definitions of the function $K$ in \eqref{eq:Def-L}, the energy $\E(u)$ in \eqref{def:energy}, the eigenvalue candidate $\lambda_{1}$ in \eqref{def:lambda-un}, and that of the set $S$ in \eqref{def:S}. Observe that, for $u\in S$ we have
$$
\int_{\R^N} (K*u)u\leq \left\Vert K*u\right\Vert_{L^2}\left\Vert u\right\Vert_{L^2}\leq \left\Vert K\right\Vert_{L^1}\left\Vert u\right\Vert_{L^2}\left\Vert u\right\Vert_{L^2}=\left\Vert K\right\Vert_{L^{1}},
$$
and thus $\lambda_{1} \geq -\left\Vert K\right\Vert_{L^{1}} = -\sigma^2$.

We first take advantage of the condition \eqref{hyp:non-int} to prove the following.

\begin{lemma}\label{lem:energy-neg} There is $\phi_{*} \in S$ such that $\E(\phi_{*}) < 0$.
\end{lemma}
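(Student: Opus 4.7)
The plan is to produce such a $\phi_*$ by writing down an explicit test function tailored to Assumption~\ref{ass:linkJW}. The natural candidate is
\[
\psi(x) := \frac{\mathbf{1}_{B_\epsilon}(x)}{W(x)},
\]
which is bounded by $1/\epsilon$ on its support and vanishes off $B_\epsilon$. Once $\psi$ is shown to lie in $L^2(1+W)\setminus\{0\}$, I will plug it into $\E$ and read off the desired negativity directly from \eqref{hyp:non-int}.

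First I would check admissibility. Observe that, for the inequality \eqref{hyp:non-int} to be meaningful, its right-hand side $\int_{B_\epsilon}1/W$ must be finite (otherwise $B_\epsilon$ has infinite measure and both sides are $+\infty$, violating strictness; more concretely, by Young's inequality the left-hand side is bounded above by $\sigma^2\|J\|_{L^1}\|\mathbf{1}_{B_\epsilon}/W\|_{L^\infty}\|\mathbf{1}_{B_\epsilon}/W\|_{L^1}\leq(\sigma^2/\epsilon)\int_{B_\epsilon}1/W$). Since $W\geq\epsilon$ on $B_\epsilon$,
\[
\int_{\R^N}(1+W)\psi^2\,dx=\int_{B_\epsilon}\frac{1+W(x)}{W(x)^2}\,dx\leq\Big(\tfrac1\epsilon+1\Big)\int_{B_\epsilon}\frac{dx}{W(x)}<\infty,
\]
so $\psi\in L^2(1+W)$, and $\psi\not\equiv 0$ since $W>0$ on $B_\epsilon$.

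Second, I would compute the energy by direct substitution. Using $K=\sigma^2 J$ and Tonelli's theorem,
\[
\E(\psi)=-\sigma^2\iint_{B_\epsilon\times B_\epsilon}\frac{J(x-y)}{W(x)W(y)}\,dxdy+\int_{B_\epsilon}\frac{W(x)}{W(x)^2}\,dx,
\]
which simplifies to
\[
\E(\psi)=\int_{B_\epsilon}\frac{dx}{W(x)}-\sigma^2\iint_{B_\epsilon\times B_\epsilon}\frac{J(x-y)}{W(x)W(y)}\,dxdy.
\]
Assumption~\ref{ass:linkJW} is precisely the statement that the right-hand side is strictly negative, so $\E(\psi)<0$.

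Finally, I would renormalize by setting $\phi_*:=\psi/\|\psi\|_{L^2}$. Then $\phi_*\in S$ and, since $\E$ is a quadratic form, $\E(\phi_*)=\E(\psi)/\|\psi\|_{L^2}^2<0$, which is the claim. There is essentially no obstacle here: Assumption~\ref{ass:linkJW} was manifestly engineered so that the test function $\mathbf{1}_{B_\epsilon}/W$ has negative energy, and this lemma simply records that fact in the form that will be used later to locate the infimum $\lambda_1$ strictly below $0$.
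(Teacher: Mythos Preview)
Your proof is correct and follows essentially the same approach as the paper: both take the test function $\mathbf{1}_{B_\epsilon}/W$, check it lies in $L^2(1+W)$, and read off $\E<0$ directly from Assumption~\ref{ass:linkJW}. The only cosmetic difference is that the paper secures membership in $L^2(1+W)$ by first reducing (via monotone convergence) to the case where $B$ is essentially bounded, whereas you argue that $\int_{B_\epsilon}1/W<\infty$ is forced by the strict inequality~\eqref{hyp:non-int}.
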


\begin{proof}
It is a direct consequence of Assumption~\ref{ass:linkJW}.
Indeed, due to the monotone convergence theorem, we can assume w.l.o.g. that the set $B$ in Assumption~\ref{ass:linkJW} is essentially bounded. So for any $C>0$ the function
$$
\phi_*(x) := C\frac{1}{W(x)}{\mathbf 1}_{B_{\epsilon}}(x)
$$
belongs to $L^2(1+W)$, and we can choose $C$ such that $\int_{\R^N}\phi_*^2(x)dx = 1$, {\it i.e.} $\varphi_*\in S$.
Then, recalling the definition of $b_\epsilon>0$ in~\eqref{eq:b-eps}, we have $
\E(\phi_*) = -b_\epsilon<0$. 
\end{proof}

The following is a sort of compactness result, or rather a weak sequential continuity, concerning the quadratic mapping 
$$u \mapsto \int_{\R^N}(K*u)(x)u(x)dx.$$

\begin{lemma}\label{lem:convergence} If $(u_n)_{n\geq0}\subset S$ verifies
\begin{equation}\label{cv-faible}
u_{n} \weak u \quad \text{in }\, L^2(1+W),
\end{equation}
then
$$
\int_{\R^{N}}(K*u_{n})u_{n} \to \int_{\R^{N}} (K*u)u.
$$
\end{lemma}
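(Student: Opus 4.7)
The plan is to reduce the claim to the strong convergence $K*u_n \to K*u$ in $L^2(\R^N)$. Once this is established, writing
\[
\int(K*u_n)u_n - \int(K*u)u = \int(K*u_n - K*u)\,u_n + \int (K*u)\,(u_n - u),
\]
the first term is controlled by Cauchy--Schwarz together with $\|u_n\|_{L^2}=1$, while the second tends to $0$ by weak $L^2$ convergence of $u_n$ to $u$ (note that $K*u \in L^2$ by Young's inequality). Two preliminary facts underlie this reduction. Since $L^2(1+W) \hookrightarrow L^2$ continuously, weak convergence in $L^2(1+W)$ entails weak convergence in $L^2(\R^N)$. Moreover, the confining property $W(x)\to +\infty$ yields a uniform tightness estimate
\[
\int_{|x|>R}u_n^2(x)\,dx \leq \bigl(\inf_{|x|>R}(1+W)\bigr)^{-1}\sup_k\|u_k\|^2_{L^2(1+W)},
\]
which tends to $0$ as $R\to\infty$ uniformly in $n$.

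To prove the strong $L^2$ convergence of $K*u_n$, I would approximate $K$ in $L^1$ by a smooth compactly supported kernel. Given $\eta>0$, decompose $K = K_1+K_2$ with $K_1 \in C_c^\infty(\R^N)$ and $\|K_2\|_{L^1} < \eta$. Young's inequality yields $|\int(K_2*u_n)\,u_n| \leq \eta$ and $|\int(K_2*u)\,u|\leq \eta$, so only the term involving $K_1$ needs careful handling. Set $v_n := u_n - u$, which is tight in $L^2$ and weakly null in $L^2$, and split $v_n = v_n\1_{B(0,R)} + v_n\1_{B(0,R)^c}$. The outer piece satisfies $\|K_1*(v_n\1_{B(0,R)^c})\|_{L^2} \leq \|K_1\|_{L^1}\|v_n\|_{L^2(B(0,R)^c)}$, which is arbitrarily small uniformly in $n$ for $R$ large, by tightness. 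The inner piece $K_1*(v_n\1_{B(0,R)})$ is supported in a fixed ball (of radius $R$ plus the support radius of $K_1$), uniformly bounded in $L^\infty$ by Cauchy--Schwarz ($\|K_1\|_\infty |B(0,R)|^{1/2}\|v_n\|_{L^2}$), and tends pointwise to $0$ because $\1_{B(0,R)}\,K_1(x-\cdot)\in L^2$ and $v_n \weak 0$ in $L^2$. Dominated convergence on this fixed bounded support then gives $L^2$ convergence to $0$. Combining the two pieces and letting $\eta\to 0$ yields $K*u_n \to K*u$ in $L^2$, and hence the lemma.

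The main obstacle is precisely this strong convergence step: the operator $u\mapsto K*u$ is not compact on $L^2(1+W)$ in general, and weak $L^2$ convergence together with tightness does not imply strong $L^2$ convergence (as oscillatory examples show). What unlocks the argument is the dual smoothing effect of convolution with a compactly supported mollifier: it turns weak $L^2$ convergence of a tight family into pointwise, uniformly bounded convergence on a fixed bounded set, after which dominated convergence applies.
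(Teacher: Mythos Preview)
Your argument is correct, but it follows a different route from the paper's. The paper exploits directly the hypothesis $K\in L^2(\R^N)$ (from Assumption~\ref{ass:kernel}): since $K(x-\cdot)\in L^2$, weak $L^2$ convergence gives $(K*u_n)(x)\to(K*u)(x)$ pointwise, and Cauchy--Schwarz gives the uniform bound $|(K*u_n)(x)|\leq\|K\|_{L^2}$; dominated convergence then yields $K*u_n\to K*u$ in $L^2_{\mathrm{loc}}$. The tail of the bilinear integral $\int_{|x|>R}(K*u_n)u_n$ is controlled using the confining weight, and on the ball one combines the $L^2_{\mathrm{loc}}$ strong convergence with the weak convergence of $u_n$.

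Your approach instead approximates $K$ in $L^1$ by a $C_c^\infty$ kernel and uses tightness of $(u_n-u)$ to localize before applying dominated convergence. This yields the stronger conclusion $K*u_n\to K*u$ in $L^2(\R^N)$ globally and, notably, requires only $K\in L^1$, not $K\in L^2$. The trade-off is that your proof is somewhat longer, while the paper's is shorter precisely because it leans on the extra $L^2$ integrability of $K$ already built into the assumptions. One minor presentational wrinkle: you announce that you will prove strong $L^2$ convergence of $K*u_n$, but then handle the $K_2$ piece at the level of the bilinear form; it would be cleaner (and just as easy) to write $\|K_2*(u_n-u)\|_{L^2}\leq\|K_2\|_{L^1}\|u_n-u\|_{L^2}\leq 2\eta$ to keep the argument consistent.
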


\begin{proof} Since $(u_{n})_{n}$ is weakly convergent in $L^2(1+W)$, we can set
$$M := \sup_{n \geq 1}\|u_{n}\|_{L^2(1+W)} < \infty.$$
Due to the Cauchy-Schwarz inequality  we have
\begin{equation}
\label{est-pointwise}
\vert (K* u_{n})(x)\vert\leq \Vert K\Vert_{L^2}\Vert  u_{n}\Vert_{L^2}=\Vert K\Vert_{L^2}.
\end{equation}
Next note that since $(u_{n})_{n}$ converges weakly to $u$ in $L^2(1+W)$, we have also $u_{n} \weak u$ in $L^{2}$. Now, for a given $x\in \R ^{N}$, we have $K(x - \cdot) \in L^2(\R^N)$ and therefore
$$
(K* u_{n})(x) = \int_{\R^N} K(x-y) u_{n}(y)dy \to \int_{\R^N} K(x-y) u (y)dy = (K* u)(x).
$$
From this and \eqref{est-pointwise}, using the Lebesgue dominated convergence theorem we deduce that 
\begin{equation}
\label{L2loc}
K* u_{n} \to K* u \quad \text{ in } L^{2}_{\rm loc}(\R^{N}).
\end{equation}

Now, let $\epsilon > 0$ be given. We may choose $R > 0$ large enough so that $(1+W(x))^{-1/2}\leq \epsilon$ when $|x| > R$. Then
\begin{eqnarray*}
\left\vert \int_{\vert x\vert >R} (K* u_{n}) u_{n}\right\vert&\leq &\ep \int_{\vert x\vert >R}(K*\vert  u_{n}\vert) (1+W)^{\frac 12}\vert  u_{n}\vert\\
&\leq &\ep\, \Vert K*\vert  u_{n}\vert \,\Vert_{L^2}\;\Vert (1+W)^{\frac 12}\vert  u_n\vert\, \Vert_{L^2}\\
&\leq &\ep\, \Vert K\Vert_{L^{1}}\;\Vert  u_{n} \Vert_{L^2}\;\Vert (1+W)^{\frac 12} u_{n} \Vert_{L^2}\\
&\leq & \epsilon M^2\|K\|_{L^1},
\end{eqnarray*}
and it is clear that the same estimate holds for $\left\vert \int_{\vert x\vert >R} (K* u) u\right\vert$. As a result
\begin{multline*}
\left\vert \int_{\R^{N}} (K* u_{n}) u_{n}-\int_{\R^{N}} (K* u) u
\right\vert\\
\leq \left\vert\int_{\vert x\vert \leq R} (K*( u_{n}- u)) u_{n}\right\vert+\left\vert\int_{\vert x\vert \leq R} (K* u)( u_{n}- u)\right\vert + 2\epsilon M^2\|K\|_{L^1}.
\end{multline*}
As $n\to+\infty$, the first and second terms in the right hand side tend to zero due to \eqref{L2loc} and \eqref{cv-faible} respectively. This concludes the proof of the lemma.
\end{proof}

We are now in a position to prove our main result concerning the eigenvalue problem~\eqref{eq:eigenvalue}, namely the existence of a unique (up to normalization) principal eigenfunction.

\begin{proof}[Proof of Theorem \ref{thm:eigen}] We consider a sequence $\phi_{n} \in S$ such that
$$
\lambda_{1}\leq \E(\phi_{n})\leq \lambda_{1} + \frac 1n.
$$
Since $\E(|u|)\leq \E(u)$, up to replacing $\phi_{n}$ by $|\phi_{n}|$ we can assume $\phi_{n}\geq 0$. We have also 
\begin{eqnarray*}
\int_{\R^N}W(x)\phi_{n}^2(x)dx  &=& \displaystyle \E(\phi_{n}) 
+ \int_{\R^N}(K*\phi_{n})(x)\phi_{n}(x)dx \\
& \leq &\lambda_{1} + 1 + \|K\|_{L^1}\|\phi_{n}\|_{L^2}^2 = \lambda_{1} + 1 + \sigma^2,
\end{eqnarray*}
so that $(\phi_{n})_{n}$ is bounded in $L^2(1+W)$, and thus there exists $\phi \in L^2(1+W)$ and a subsequence, denoted again by $(\phi_{n})_{n}$, such that
\begin{equation}\label{cv-faible-bis}
\phi_{n} \weak \phi \quad\text{and}\quad \sqrt W\phi_{n} \weak \sqrt W\phi\quad \text{ in } L^2.
\end{equation}
Next, using Lemma \ref{lem:convergence}, we have
\begin{equation}\label{ineg}
0\leq \int_{\R^N}  W(x)\phi_{n}^2(x)dx = \E(\phi_{n}) + \int_{\R^N} (K*\phi_{n})\phi_{n} \to \lambda_{1}+\int_{\R^N} (K*\phi)\phi.
\end{equation}
Since by Lemma \ref{lem:energy-neg} we have $\lambda_{1} < 0$, the above inequality implies that $\phi\not\equiv 0$ and $\phi \geq 0$.
On the other hand, thanks to the weak convergences given in \eqref{cv-faible-bis} we have
$$
\int_{\R^N} W(x)\phi^2(x)dx \leq \liminf_{n\to +\infty}\int_{\R^N}W(x)\phi_{n}^2(x) dx = \lambda_{1} + \int_{\R^N}(K*\phi)\phi ,
$$
and also
$$\int_{\R^N}\phi^2(x)dx \leq \liminf_{n\to +\infty} \int_{\R^N}\phi_{n}^2(x)dx = 1.$$
Thus $\E(\phi)\leq \lambda_{1}$ and $\theta^2 := \int_{\R^N}\phi^2(x)dx \leq 1$. Since $\phi\not\equiv 0$, setting $
{\widetilde \phi} := \theta^{-1}\phi$ we have ${\widetilde \phi} \in S$
and
$$
\lambda_{1} \leq \E(\widetilde \phi) = \theta^{-2}\E(\phi)\leq \theta^{-2}\lambda_{1} \leq \lambda_{1},
$$
where in the last inequality we use the fact that $\lambda_{1} < 0$. Clearly this implies that $\theta^2 = 1$ and thus ${\widetilde \phi} = \phi \in S$: this means that $\|\phi_{n}\|_{L^2} \to \|\phi\|_{L^2}$ while $\phi_{n} \weak \phi$ in $L^2(\R^N)$, yielding that the convergence of $(\phi_{n})_{n}$ to $\phi$ is strong. The same above inequalities imply also that $\E(\phi) = \lambda_{1}$, while from \eqref{ineg} we infer that
$$
\int_{\R^N}W(x)\phi_{n}^2(x)dx \to \int_{\R^N}W(x)\phi^2(x)dx,
$$
that is $\|\phi_{n}\|_{L^2(1+W)} \to \|\phi\|_{L^2(1+W)}$, again yielding that $\phi_{n} \to \phi$ in $L^2(1+W)$. 

Finally we have $\phi \in S$ and $\E (\phi) = \lambda_{1}$. Since here Assumption \ref{ass:J-symmetric} is satisfied, $L$ is self-adjoint and therefore there exists a Lagrange multiplier $\lambda\in \R$ such that
$$L\phi = -K*\phi+W(x)\phi = \lambda \phi \quad  \text{ a.e. in }\, \R ^N,$$
and, obviously upon multiplying this equation by $\phi$, one sees that $\lambda = \lambda_{1}$. We thus have 
\begin{equation}\label{egalite}
(W(x)-\lambda_{1})\phi(x) = (K*\phi)(x) \quad\text{that is}\quad
\phi(x) = \frac{(K*\phi)(x)}{W(x) - \lambda_{1}}.
\end{equation}
Since $K\in L^2(\R^N)$ and $\phi \in L^2(\R^N)$, we know that $K*\phi \in C_{0}(\R^N)$. As a result, from \eqref{egalite}, the continuity of $W$ and $W(x)-\lambda_{1} \geq -\lambda_{1} > 0$, we also have  $\phi\in C_{0}(\R^{N})$, and \eqref{eq-vp} holds.

Now, in order to see that $\phi > 0$, recalling that $\phi \in C_{0}(\R^N) \cap L^2(\R^N)$ and  $\phi \geq 0$ satisfies
$$-K*\phi + (W(x) - \lambda_{1} + 1)\phi = \phi,$$
by the strong maximum principle, see Lemma \ref{lem:Max-principle}, we have $\phi > 0$. Also, using \eqref{est-pointwise} and \eqref{egalite}, we deduce the pointwise estimate
\begin{equation*}
0 < \phi(x)\leq \frac{\Vert K\Vert_{L^2}}{W(x)-\lambda_{1}}, \quad \forall x \in \R^N.
\end{equation*}

Once we know that $\phi > 0$ on $\R^N$, we can show  $\lambda_{1} > -\sigma^2$. Indeed, multiplying equality
$$(-\lambda_{1} + W(x))\phi = K*\phi$$
by $\phi$ and integrating, since $\phi > 0$ and $W \not\equiv 0$ is nonnegative, we get
$$-\lambda_{1} <  \int_{\R^N}(-\lambda_{1} + W(x))\phi^2(x)dx = \int_{\R^N}(K*\phi)(x)\phi(x)dx \leq \|K\|_{L^1} = \sigma^2,$$
where we have used H\"older's inequality on the right hand side together with Young's inequality $\|K*\phi\|_{L^2} \leq \|K\|_{L^1}\|\phi\|_{L^2}$, and the fact that $\|\phi\|_{L^2} = 1$, while $\|K\|_{L^1} = \sigma^2$.

It remains to prove the uniqueness of $\phi$, or in other terms the fact that the eigenspace corresponding to $\lambda_{1}$ has dimension one: that is if $\psi \in L^{2}(\R^{N})$ satisfies 
$$\psi\not\equiv 0, \qquad -K*\psi + W(x)\psi = \lambda_{1} \psi\qquad \text{in }\, \R^N, $$
then for a constant $\alpha \in \R$ we have $\alpha\psi = \phi$. Arguing as above, we conclude first that $\psi \in C_{0}(\R^{N})$ and, without loss of generality we may assume that there exists $x^{*} \in \R^N$ such that $\psi(x^{*}) > 0$, at the cost of replacing $\psi$ by $-\psi$, if necessary. Next, let $R > 0$ be large enough so that $W(x)-\lambda_{1} - \sigma^2 > 0$ for $|x| > R$, where we recall that $\sigma^2 = \|K\|_{L^1}$ (this is possible thanks to the fact that $W$ is confining). Since $\phi > 0$, we can choose $\epsilon > 0$ small enough so that $u_{\epsilon} := \phi - \epsilon \psi >0$ on $B(0,R)$. Let us now prove that
\begin{equation}
\label{eq:claim-partout}
u_{\epsilon} \geq 0 \quad \text{ on the whole of }\, \R^{N}.
\end{equation} 
If this were not true, then using the fact that $u_{\epsilon} \in C_{0}(\R^{N})$, we infer that $u_{\epsilon}$ achieves its global negative minimum at some $x_{0} \in \R^N$, and we necessarily have $|x_{0}| > R$. Since on the one hand
$$K*u_{\epsilon}(x_{0}) - \sigma^2 u_\ep(x_{0}) = \int_{\R^N} K(y)\left(u_{\epsilon}(x_{0} - y) - u_{\epsilon}(x_{0})\right) dy \geq 0,$$
and on the other hand, using the linear equations satisfied by $\phi$ and $\psi$, we have
$$
K*u_{\epsilon}(x_{0}) - \sigma^2 u_{\epsilon}(x_{0}) = (W(x_{0}) - \lambda_{1} - \sigma^2) u_{\epsilon}(x_{0}) < 0,
$$
we have a contradiction, which implies that \eqref{eq:claim-partout} holds.

Now, since $\psi(x^{*}) > 0$, we point out that if $u_{\epsilon}(x) \geq 0$ in $\R^N$, in particular $u_{\epsilon}(x_{*}) \geq 0$ and thus $\epsilon \leq \phi(x_{*})/\psi(x_{*})$. Hence we can define the real number
\begin{equation}\label{eq:Def-alpha-A}
\alpha := \sup A, \quad \text{where}\quad A := \left\{\epsilon > 0: u_{\epsilon} := \phi - \epsilon \psi \geq 0 \,\text{ on }\, \R^{N} \right\} ,
\end{equation}
and we know that $0 < \alpha \leq \phi(x_{*})/\psi(x_{*})$.
In particular we infer that if we set $u_{\alpha} := \phi - \alpha \psi$ then $u_{\alpha} \geq 0$ and satisfies
$$u_{\alpha} \in C_{0}(\R^N), \qquad
-K*u_{\alpha} + (W(x) - \lambda_{1} + 1)u_{\alpha} = u_{\alpha} \geq 0.$$
However, if we had $u_{\alpha} \not\equiv 0$, thanks to Lemma \ref{lem:Max-principle} we would have $u_{\alpha} > 0$ in $\R^N$ and there would exist $\epsilon_{0} > 0$ small enough such that $u_{\alpha + \epsilon_{0}} := u_{\alpha} - \epsilon_{0} \psi > 0$ on the ball $B(0,R)$. Proceeding as in the proof of \eqref{eq:claim-partout}, we would deduce that $u_{\alpha + \epsilon_{0}} \geq 0$ in $\R^N$ and thus $\alpha + \epsilon_{0} \in A$, the set defined in \eqref{eq:Def-alpha-A}, contradicting the definition of $\alpha$. Therefore we must have $u_{\alpha} \equiv 0$, that is $\phi = \alpha \psi$.
\end{proof}


\subsection{A quantified spectral gap result in $L^2(\R^N)$}

In this section we suppose that Assumption~\ref{ass:W-fort} is verified, and we consider a convex, open and bounded set $\Omega\subset\R^N$ that contains the origin.
Then we have the following results.

\begin{lemma}\label{lm:fct-ineq}
For all $u\in D(L)=L^2(1+W)$ such that $\int_{\R^N}u=0$ and $u\not\equiv0$ we have
\[\frac{\langle-Lu,u\rangle}{\|u\|_{L^2}^2}\leq \Phi\bigg(\frac{\int_{\Omega^c}Wu^2}{\|u\|_{L^2}^2}\bigg),\]
where the function $\Phi$ is defined in~\eqref{def:Phi}.
\end{lemma}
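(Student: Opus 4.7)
The target $\Phi(\xi)$ is a minimum of two expressions, so the plan is to establish each of the bounds $-\langle Lu,u\rangle/\|u\|_{L^2}^2\le\sigma^2-\xi$ and $-\langle Lu,u\rangle/\|u\|_{L^2}^2\le\eta\meas(\Omega)+a_1\xi+a_2\sqrt\xi$ independently, where $\xi:=\int_{\Omega^c}Wu^2/\|u\|_{L^2}^2$. By homogeneity I normalize $\|u\|_{L^2}=1$ throughout. The first of these two bounds is immediate: Young's inequality gives $\int(K*u)u\le\|K\|_{L^1}\|u\|_{L^2}^2=\sigma^2$, and $\int Wu^2\ge\int_{\Omega^c}Wu^2=\xi$ since $W\ge 0$, whence $-\langle Lu,u\rangle=\int(K*u)u-\int Wu^2\le\sigma^2-\xi$.

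The substantive bound I would attack by splitting $u=v+w$ with $v:=u\1_\Omega$ and $w:=u\1_{\Omega^c}$. Since $K$ is even, $\int v(K*w)=\int w(K*v)$ and one has
\[
-\langle Lu,u\rangle \;=\; \bigl[\textstyle\int(K*v)v-\int Wv^2\bigr]+2\!\textstyle\int v(K*w)+\bigl[\textstyle\int(K*w)w-\xi\bigr].
\]
The decisive structural input is the identity $\int_\Omega u=-\int_{\Omega^c}u$ furnished by $\int u=0$. For the first bracket, the plan is to shift $K$ by $\bar K:=\essinf_{2\Omega}K$ inside $\int(K*v)v=\iint_{\Omega\times\Omega}K(x-y)u(x)u(y)\,dxdy$, rewriting it as $\iint_{\Omega\times\Omega}(K-\bar K)u\,u+\bar K\bigl(\int_{\Omega^c}u\bigr)^{2}$. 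The oscillation bound $|K-\bar K|\le\eta$ on $2\Omega$, together with $(\int_\Omega|u|)^2\le\meas(\Omega)\|v\|_{L^2}^2\le\meas(\Omega)$, controls the first integral by $\eta\meas(\Omega)$; Cauchy--Schwarz with weight $1/W$ gives $(\int_{\Omega^c}u)^2\le\xi\int_{\Omega^c}(1/W)$, and so the second term is at most $a_1\xi$. Dropping the nonpositive $-\int Wv^2$ yields the required estimate on the first bracket. For the cross term $2\int v(K*w)=2\int_\Omega u(x)\int_{\Omega^c}K(x-y)u(y)dy\,dx$, two successive Cauchy--Schwarz inequalities do the job: first on the inner integral in $y$, using the weights $\sqrt{K(x-y)/W(y)}$ and $\sqrt{K(x-y)W(y)}$ combined with $\sup_x\int_{\Omega^c}K(x-y)/W(y)dy\le A$; then on the outer integral in $x$, using $\|v\|_{L^2}\le 1$ and Fubini together with $\int_\Omega K(x-y)dx\le\sigma^2$, produces $|2\int v(K*w)|\le 2\sigma\sqrt{A}\,\sqrt\xi=a_2\sqrt\xi$.

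The main obstacle I anticipate is the third bracket $\int(K*w)w-\xi=-\langle Lw,w\rangle$. A weighted Cauchy--Schwarz gives $\int(K*w)w\le(\int Ww^2)^{1/2}(\int_{\Omega^c}(K*w)^2/W)^{1/2}$, and a second Cauchy--Schwarz inside the convolution pushes the inner factor to $\int_{\Omega^c}(K*w)^2/W\le A^2\xi$, leaving $\int(K*w)w\le A\xi$ and a remainder $(A-1)\xi$ that is not automatically nonpositive. Handling this cleanly --- either by a sharper weighted estimate exploiting the dropped $-\int Wv^2$ contribution, or by invoking the first branch $\sigma^2-\xi$ of $\Phi$ to take over in the parameter regime where the leftover is unfavorable --- is the delicate point on which the proof hinges. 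Once this last piece is absorbed, summing the three contributions delivers the stated inequality.
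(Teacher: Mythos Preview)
Your first bound and your treatment of the $\Omega\times\Omega$ part are correct and essentially coincide with the paper's argument (the paper uses the $u^+/u^-$ decomposition instead of your shift by $\bar K$, but the outcome is the same bound $\eta\meas(\Omega)+a_1\xi$). The obstacle you flag in the third bracket is, however, real in your decomposition, and neither of the fixes you suggest works: the dropped $-\int Wv^2$ is of no help since $W$ can vanish on $\Omega$, and the first branch $\sigma^2-\xi$ cannot be invoked here because the statement requires \emph{both} branches to hold for every $u$.

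The resolution is simpler than you expect: do not keep $-\xi$ with the $\Omega^c\times\Omega^c$ piece, and do not separate the cross term from the $\Omega^c\times\Omega^c$ term. Drop the full $-\int Wu^2$ at the outset (it is nonpositive), so that the task is to bound $\int(K\ast u)u$. Then observe that
\[
2\int v(K\ast w)+\int(K\ast w)w \;=\; \iint_{(\Omega\times\Omega)^c}K(x-y)\,u(x)u(y)\,dxdy
\]
and, after passing to absolute values and using the symmetry of $K$ together with the inclusion $(\Omega\times\Omega)^c\subset(\Omega^c\times\R^N)\cup(\R^N\times\Omega^c)$, this is at most $2\int_{\Omega^c}(K\ast|u|)\,|u|$. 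Now apply Cauchy--Schwarz with weight $W$ on $\Omega^c$ and bound $(K\ast|u|)^2\le\|K\|_{L^1}\,K\ast u^2$; Fubini and the definition of $a_2$ give exactly $a_2\sqrt{\xi}\,\|u\|_{L^2}^2$, with no residual term. The key difference with your cross-term estimate is that here the convolution acts on the full $|u|$, not just on $|w|$, which is what allows the $\Omega^c\times\Omega^c$ contribution to be absorbed for free.
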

\begin{proof}
First, we clearly have, for all $u\in L^2(1+W)$,
\begin{equation}
\label{qqch0}
\langle-Lu,u\rangle=\langle K*u,u\rangle-\langle Wu,u\rangle\leq\|K\|_{L^1}\|u\|_{L^2}^2-\int_{\Omega^c}Wu^2=\sigma^2 \|u\|_{L^2}^2-\int_{\Omega^c}Wu^2.
\end{equation}
The second part of the minimum defining the function $\Phi$ in \eqref{def:Phi} deserves more attention and is valid only under the condition $\int_{\R^N}u=0$.
Due to the non-negativity of $W$ we have
\begin{equation}\label{qqch}
\langle-Lu,u\rangle\leq\langle K*u,u\rangle=\iint_{\Omega\times\Omega}K(x-y)u(x)u(y)\,dxdy+\iint_{(\Omega\times\Omega)^c}K(x-y)u(x)u(y)\,dxdy.
\end{equation}
We start by estimating the first term.
Using that $K$ is symmetric we have
\begin{align*}
\iint_{\Omega\times\Omega}&K(x-y)u(x)u(y)\,dxdy=\iint_{\Omega\times\Omega}K(x-y)u^+(x)u^+(y)\,dxdy\\
&\qquad+\iint_{\Omega\times\Omega}K(x-y)u^-(x)u^-(y)\,dxdy-2\iint_{\Omega\times\Omega}K(x-y)u^+(x)u^-(y)\,dxdy\\
&\leq\big(\esssup_{2\Omega}K\big)\bigg(\Big(\int_\Omega u^+\Big)^2+\Big(\int_\Omega u^-\Big)^2\bigg)-2\big(\essinf_{2\Omega}K\big)\Big(\int_\Omega u^+\Big)\Big(\int_\Omega u^-\Big)\\
&\leq\big(\essinf_{2\Omega}K\big)\bigg(\int_\Omega u^+-\int_\Omega u^-\bigg)^2+\eta\bigg(\Big(\int_\Omega u^+\Big)^2+\Big(\int_\Omega u^-\Big)^2\bigg)\\
&\leq\big(\essinf_{2\Omega}K\big)\bigg(\int_\Omega u\bigg)^2+\eta\meas(\Omega)\,\|u\|_{L^2}^2\,.
\end{align*}
Since $\int_{\R^N}u=0$, using the Cauchy-Schwarz inequality,
\[\bigg(\int_\Omega u\bigg)^2=\bigg(\int_{\Omega^c} u\bigg)^2\leq\bigg(\int_{\Omega^c}\frac1W\bigg)\bigg(\int_{\Omega^c}Wu^2\bigg).\]
As a result
\begin{equation}
\label{qqch2}
\iint_{\Omega\times\Omega}K(x-y)u(x)u(y)\,dxdy\leq \eta\meas(\Omega)\,\|u\|_{L^2}^2+(\essinf_{2\Omega}K\big)\bigg(\int_{\Omega^c}\frac1W\bigg)\bigg(\int_{\Omega^c}Wu^2\bigg).
\end{equation}
For the second term, using again the symmetry of $K$, we have
\begin{align*}
\iint_{(\Omega\times\Omega)^c}K(x-y)u(x)u(y)\,dxdy&\leq \iint_{(\Omega\times\Omega)^c}K(x-y)\vert u(x)u(y)\vert \,dxdy\\
&=\iint_{\Omega^c\times\R^N}K(x-y)\vert u(x)u(y)\vert \,dxdy\\
&\qquad+\iint_{\Omega\times\Omega^c}K(x-y)\vert u(x)u(y)\vert \,dxdy\\
&\leq2\int_{\Omega^c}(K*\vert u\vert)(x)\vert u (x)\vert \,dx\\
&\leq2\sqrt{\int_{\Omega^c}\frac{((K*\vert u\vert )(x))^2}{W(x)}dx}\,\sqrt{\int_{\Omega^c}W(x)u^2(x)\,dx}\,.
\end{align*}
Since
\[((K*\vert u\vert )(x))^2\leq\|K\|_{L^1}\int_{\R^N}K(x-y)u^2(y)\,dy,\]
we get
\begin{align}
\iint_{(\Omega\times\Omega)^c}K(x-y)u(x)u(y)\,dxdy&\leq 2\sqrt{\|K\|_{L^1}\int_{\R^N}\Big(\int_{\Omega^c}\frac{K(x-y)}{W(x)}dx\Big)u^2(y)dy}\,\sqrt{\int_{\Omega^c}Wu^2}\nonumber \\
&\leq2\sigma\|u\|_{L^2}\,\sqrt{\sup_{y\in\R^N}\int_{\Omega^c}\frac{K(x-y)}{W(x)}dx}\,\sqrt{\int_{\Omega^c}Wu^2}.\label{qqch3}
\end{align}
In view of \eqref{def:Phi}, it now suffices to combine \eqref{qqch0}, \eqref{qqch}, \eqref{qqch2} and \eqref{qqch3} to prove the result.
\end{proof}

\begin{corollary}\label{cor:spectral-gap}
If $\lambda\in\R$ is an eigenvalue of $L$ such that $\lambda\neq\lambda_1$, then $\lambda\geq -\bar\Phi:=-\sup_{[0,\infty)}\Phi$. 
\end{corollary}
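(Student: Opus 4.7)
The plan is to reduce the spectral bound to the functional inequality of Lemma~\ref{lm:fct-ineq} by replacing the eigenfunction with a zero-mean correction.

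Let $u$ be an eigenfunction of $L$ associated with an eigenvalue $\lambda\neq\lambda_1$. Under Assumption~\ref{ass:J-symmetric} the operator $L$ is self-adjoint, so eigenfunctions for distinct eigenvalues are $L^2$-orthogonal, giving $\langle u,\phi\rangle=0$; the variational definition~\eqref{def:lambda-un} also forces $\lambda\geq\lambda_1$, and hence $\lambda>\lambda_1$. We may assume $u\in L^1(\R^N)$: the identity $(W-\lambda)u=K*u$ yields the pointwise decay $|u(x)|\leq\|K\|_{L^2}/(W(x)-\lambda)$ exactly as in the proof of Theorem~\ref{thm:eigen}, so $u$ is integrable whenever $1/W$ is integrable at infinity; in the complementary case one has $\bar\Phi=\sigma^2$ and the conclusion reduces to the elementary bound $\lambda\geq-\sigma^2$ inherited from $\sigma(L)\subset[-\sigma^2,+\infty)$.

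Set $c:=\int u/\int\phi$, which is well-defined because $\phi>0$, and let $v:=u-c\phi\in D(L)$. By construction $\int v=0$, and $v\not\equiv 0$, since $u=c\phi$ would place $u$ in the $\lambda_1$-eigenspace, contradicting $\lambda\neq\lambda_1$. Using $Lu=\lambda u$, $L\phi=\lambda_1\phi$, and $\langle u,\phi\rangle=0$, a direct expansion gives
$$\langle Lv,v\rangle=\lambda\|u\|_{L^2}^2+c^2\lambda_1\|\phi\|_{L^2}^2,\qquad \|v\|_{L^2}^2=\|u\|_{L^2}^2+c^2\|\phi\|_{L^2}^2.$$
Applying Lemma~\ref{lm:fct-ineq} to $v$ provides $\langle Lv,v\rangle\geq-\bar\Phi\|v\|_{L^2}^2$, which after rearrangement becomes the single key inequality
$$(\lambda+\bar\Phi)\|u\|_{L^2}^2+(\lambda_1+\bar\Phi)\,c^2\|\phi\|_{L^2}^2\geq 0.$$

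The conclusion follows by contradiction: if $\lambda<-\bar\Phi$, then since $\lambda_1\leq\lambda$ both $(\lambda+\bar\Phi)$ and $(\lambda_1+\bar\Phi)$ are strictly negative, and as $u\not\equiv 0$ the left-hand side above is strictly negative, a contradiction. Hence $\lambda\geq-\bar\Phi$. The main conceptual step, rather than any computation, is the recognition that the zero-mean hypothesis of Lemma~\ref{lm:fct-ineq} can be met for free by subtracting a multiple of the principal eigenfunction, and that self-adjointness keeps the quadratic form $\langle L\,\cdot,\cdot\rangle$ diagonal on the plane spanned by $u$ and $\phi$, so that the correction contributes only the harmless non-positive term $(\lambda_1+\bar\Phi)\,c^2\|\phi\|_{L^2}^2$.
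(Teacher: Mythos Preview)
Your proof is correct and follows essentially the same route as the paper: both arguments enforce the zero-mean hypothesis of Lemma~\ref{lm:fct-ineq} by subtracting a multiple of $\phi$, exploit the $L^2$-orthogonality $\langle u,\phi\rangle=0$ to make the quadratic form $\langle L\cdot,\cdot\rangle$ diagonal on $\operatorname{span}\{u,\phi\}$, and then use $\lambda_1\leq\lambda$ to absorb the $\phi$-contribution. Your additional case analysis on the integrability of $1/W$ (to justify that $\int u$ and $\int\phi$ are finite) is a point the paper's proof leaves implicit.
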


\begin{proof}
Let $\lambda\in\R$ (recall that since $L$ is self-adjoint it has a real spectrum) and $\psi\in D(L)$ such that $L\psi=\lambda\psi$ with $\lambda\neq\lambda_1$.
Then necessarily $\langle\psi,\varphi\rangle=0$, where $\varphi>0$ is the principal eigenfunction, so that $\psi$ cannot be of constant sign.
If $\int_{\R^N}\psi=0$ then Lemma~\ref{lm:fct-ineq} applied to $u=\psi$ immediately ensures that $-\lambda\leq\bar\Phi$.
If $\int_{\R^N}\psi\neq0$, there exists $\alpha\in\R$ such that $\tilde \psi:=\psi+\alpha\varphi$ verifies $\int_{\R^N}\tilde\psi=0$
and Lemma~\ref{lm:fct-ineq} applied to $u=\tilde\psi$ yields, using that $\lambda\geq\lambda_1$, $\|\varphi\|_{L^2}=1$ and $\langle\psi,\varphi\rangle=0$,
\[-\lambda(\|\psi\|_{L^2}^2+\alpha^2)\leq-\lambda\|\psi\|_{L^2}^2-\lambda_1\alpha^2=\langle-L\tilde\psi,\tilde\psi\rangle\leq\bar\Phi\|\tilde\psi\|_{L^2}^2=\bar\Phi(\|\psi\|_{L^2}^2+\alpha^2),\]
which concludes the proof.
\end{proof}

This result provides a quantified estimate of the distance between $\lambda_1$ and the other eigenvalues of $L$ provided that an upper bound smaller than $-\bar\Phi$ is known for $\lambda_1$.

\section{Long time asymptotics of the linear problem}
\label{sec:asympto}

This section is devoted to the linear evolution equation~\eqref{eq:parabolique}. Recalling that the eigenpair $(\lambda_{*},\phi)$ satisfies \eqref{eq:eigenvalue}, with $\lambda_{*} := \lambda_{1} + \sigma^2 > 0$, defined in \eqref{eq:Def-lambda-*}, and $K := \sigma^2 J$,
we readily observe that the solutions $u=u(t,x)$ to \eqref{eq:parabolique} are related to the solutions $v=v(t,x)$ of the abstract Cauchy problem
\begin{equation}\label{eq:evolution-A}
\left\{\begin{array}{l}
\displaystyle\frac {dv(t)}{dt} = \A v(t)  \qquad\text{for }\, t > 0,
\vspace{2mm}\\
v(0) = u_{0},
\end{array}\right.
\end{equation}
where the operator $(\A,D(\A))$ and the function $v(t)$ are defined by ($L$ being as in \eqref{eq:Def-L}):
\begin{equation}\label{eq:Def-A-u-v}
D(\A) := D(L), \qquad
\A v := -Lv + \lambda_{1}v, \qquad\text{and}\quad
u(t,\cdot) = {\rm e}^{-\lambda_{*}t} v(t,\cdot).
\end{equation}

Recall that a function $v(t)$ is called a {\it classical solution} of Equation~\eqref{eq:evolution-A} if it lies in $D(A)$, is continuously differentiable, and~\eqref{eq:evolution-A} holds.
It is called a {\it mild solution} if $\int_0^tv(s)ds\in D(A)$ for all $t\geq0$ and
\[v(t)=u_0+A\int_0^tv(s)\,ds.\]

From Theorem \ref{thm:eigen}, we know that $\phi$ is the unique positive steady state (up to normalization) of Equation~\eqref{eq:evolution-A} in $C_{0}(\R^{N})$ and in $L^2(\R^{N})$, and thus proving Theorem \ref{thm:asympto} is tantamount to showing that positive solutions of \eqref{eq:evolution-A} converge to (a multiple of) this stationary solution. That is why in this section we shall work with the modified equation \eqref{eq:evolution-A}.

\medskip

To analyse the long time behaviour of Equation~\eqref{eq:evolution-A}, we take advantage of the theory of strongly continuous semigroups, also called $C_{0}$-semigroups, of positive linear operators.
There is a large literature on this field, but the standard references K.~Yosida~\cite{YosidaK}, W.~Arendt \& al.~\cite{Nagel86} and K.J.~Engel \& R.~Nagel~\cite{EN} will be enough here.
Recall that we study Equation~\eqref{eq:evolution-A} in one of the following Banach lattices: $E = L^p(\R^N)$ with $1\leq p<\infty$, or $E = C_{0}(\R^N)$, or $E = \MM(\R^N) = (C_{0}(\R^N))'$, equipped with the norm $\Vert \cdot \Vert_E$ denoting the $L^p$ norm, or the $L^\infty$ norm, or the total variation norm, respectively.

\subsection{Analysis in the space $L^2(\R^N)$}

To begin with, let us study Equation~\eqref{eq:evolution-A} in the Lebesgue space $L^2(\R^N)$. Since $(L,D(L))$ is a self-adjoint operator acting in $L^2(\R^N)$, and since for $v \in D(L)$ we have $\langle Lv,v\rangle \geq \lambda_{1}\|v\|^2$, by the very definition of $\A$ by \eqref{eq:Def-A-u-v} we conclude that $(\A,D(\A))$ is self-adjoint and $\langle \A v,v\rangle \leq 0$, that is $\A$ is an $m$-dissipative operator.
Also since $A\phi = 0$, this means that zero is the principal eigenvalue of the operator $(\A,D(\A))$, and that its spectrum $\sigma(A)$ is contained in $(-\infty,0]$.

Thus by the Hille-Yosida theorem (see for instance K.~Yosida \cite[Chapter IX, Section 8]{YosidaK}) $\A$ generates a $C_{0}$-semigroup of contractions which we shall denote by $(T_{t})_{t\geq 0}$, or sometimes by $T_{t} = \exp(t\A) = {\rm e}^{t\A}$. Moreover, since $(\A,D(\A))$ is self-adjoint, $T_{t}$ is also self-adjoint on $L^2(\R^N)$ and the semigroup $(T_{t})_{t \geq 0}$ is analytic, that is for any $u_{0} \in L^2(\R^N)$ we have $T_{t}u_{0} \in D(\A)$ for $t > 0$. In particular for any $u_{0} \in L^2(\R^N)$ the function $v(t) := T_{t}u_{0}$ is the unique solution of equation \eqref{eq:evolution-A} in the classical sense on the interval $(0,\infty)$.

Note that since $(\A,D(\A))$ is $m$-dissipative, we have $\|T_{t}\|_{L^2(\R^N) \to L^2(\R^N)} \leq 1$, but since $A\phi = 0$ we have $T_{t}\phi = \phi$ for all $t \geq 0$ and thus
$$\|T_{t}\|_{L^2(\R^N) \to L^2(\R^N)} = 1.$$
This implies that the growth bound of the semigroup $(T_{t})_{t\geq0}$, that is the real number
$$\omega_{0}(\A) := \inf\left\{w \in\R: \exists M >0,\; \forall t \geq0,\; \|T_{t}\|_{L^2(\R^N) \to L^2(\R^N)}\leq M{\rm e}^{w t}\right\},$$
is equal to zero.
Besides, since $(T_{t})_{t\geq0}$ is analytic, the spectral bound of the operator $A$, that is
$$\s(\A) := \sup\{\Re\lambda : \lambda\in\sigma(\A)\}$$
is equal to the growth bound of the semigroup $(T_{t})_{t\geq0}$ generated by $A$
(see for instance K.J.~Engel \& R. Nagel~\cite[Corollary IV.3.12]{EN}).
We conclude that
\begin{equation*}\label{eq:s-equal-omega0}
\s(\A) = \omega_{0}(\A) = 0.
\end{equation*}

As it is customary in the study of large time behaviour of solutions to linear evolution equations, we wish to show that there is a {\it gap} in the spectrum of $\A$, in the sense that there exists a number $a > 0$ such that
$$\sigma(\A) \setminus \{0\} \subset (-\infty,-a).$$
Once this is shown, then it is not difficult to see that, if $v(t)$ is the solution of \eqref{eq:evolution-A} its orthogonal projection on the space $(\R\phi)^\perp$ converges to zero at least as fast as ${\rm e}^{-at}$.
Indeed the restriction $A_{|(\R\varphi)^\perp}$ of $A$ to the invariant subspace $(\R\phi)^\perp$ verifies in this case
$$\omega_0(A_{|(\R\varphi)^\perp})=\s(A_{|(\R\varphi)^\perp})<-a.$$

For proving the existence of a spectral gap, we use the notion of {\it essential growth bound},
which is defined similarly as the growth bound.
First, we define the essential norm of a bounded linear operator $T$ in a Banach space $E$ by
\[{\|T\|}_\ess:=\inf\big\{{\|T-\mathcal K\|}_{E\to E}\,:\, \mathcal K:E\dans E\ \text{is compact}\big\}.\]
Then we define the essential growth bound of a semigroup $(T_t)_{t\geq0}$ in $E$ by
$$\omega_\ess(\A) := \inf\left\{w \in\R: \exists M >0,\; \forall t \geq0,\; {\|T_{t}\|}_\ess\leq M{\rm e}^{w t}\right\}.$$
Clearly, $\omega_\ess(A)\leq\omega_0(A)$, and a semigroup $(T_t)_{t\geq0}$ is said to be quasi-compact if $\omega_\ess(A)<0$.
The usefulness of the essential growth bound lies in the following result (see for instance K.J.~Engel \& R. Nagel~\cite[Corollary IV.2.11]{EN}):
\begin{center}\begin{minipage}{.9\linewidth}
For every $w>\omega_\ess(A)$ the set $\sigma(A)\cap\{\lambda\in\C: \Re\lambda\geq w\}$ is composed of a finite number of eigenvalues with finite algebraic multiplicity.
\end{minipage}\end{center}
As a consequence, for our self-adjoint semigroup in $L^2(\R^N)$, if we can prove that $\omega_\ess(A)<0$, that is $(T_t)_{t\geq0}$ is quasi-compact, then we immediately get the existence of $a\in(0,-\omega_\ess(A))$ such that $\sigma(A)\setminus\{0\}\subset(-\infty,-a)$.

\medskip

In order to prove that $\omega_\ess(A)<0$, we split the operator $\A$ defined in~\eqref{eq:Def-A-u-v} as the sum of a local unbounded operator, namely
$$\A_{0} u:=\lambda_{1} u - W(x) u,\qquad D(\A_{0}) = D(L) = \{u\in E: (1+W) u \in E\},$$
and a nonlocal bounded perturbation, given by
$$\BB u := K*u = \sigma^2 J*u,$$
where we have $\|\BB\|_{L^2(\R^N) \to L^2(\R^N)}\leq\sigma^2$. 

It is straightforward to see that the operator $(\A_{0},D(\A_{0}))$ generates a $C_{0}$-semigroup of contractions which we shall denote by $(S_{t})_{t \geq 0}$, and as a matter of fact it can be written explicitly, not only in the space $L^2(\R^N)$ but in any of the spaces $E$ defined above.

\begin{lemma}\label{lem:St-analytique}
The unbounded operator $\big(\A_{0},D(\A_{0})\big)$ generates a positive $C_{0}$-semigroup $(S_{t})_{t\geq0}$ in $E$, explicitly given by
$$(S_{t} u_{0})(x)={\rm e}^{(\lambda_{1}-W(x))t}u_{0}(x).$$
For any $u_{0} \in E$ and $t > 0$ we have $(1 + W)S_{t}u_{0} \in E$, that is $S_{t}u_{0} \in D(\A_{0})$. In particular $(S_{t})_{t \geq 0}$ is an analytic semigroup on $E$ and $\|S_{t}\|_{L^2(\R^N) \to L^2(\R^N)} \leq {\rm e}^{\lambda_{1} t}$.
\end{lemma}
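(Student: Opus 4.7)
The plan is to start from the explicit formula and verify all the semigroup properties directly. The semigroup law $S_{t+s} = S_t S_s$ with $S_0 = I$ is immediate from the exponential identity $e^{(\lambda_1-W(x))(t+s)} = e^{(\lambda_1-W(x))t}e^{(\lambda_1-W(x))s}$, as is positivity. Since by Theorem~\ref{thm:eigen}(i) we have $\lambda_1 < 0$ while $W \geq 0$, the multiplier $e^{(\lambda_1 - W(x))t}$ is bounded above by $e^{\lambda_1 t}$, so integrating $|{\cdot}|^p$, taking a supremum, or integrating against $d|\mu|$ all give the contraction estimate $\|S_t\|_{E\to E} \leq e^{\lambda_1 t}$ in each of the three classes of spaces; in particular the $L^2$ bound claimed in the lemma follows.

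Next I would establish strong continuity at $t=0^+$. For $u_0 \in L^p(\R^N)$ or $u_0 \in \MM(\R^N)$, this is a direct application of Lebesgue's dominated convergence theorem, with pointwise limit $e^{(\lambda_1-W(x))t} \to 1$ and uniform bound $|e^{(\lambda_1-W(x))t}-1| \leq 1$. For $u_0 \in C_0(\R^N)$ I would split at a large ball: outside $B(0,R)$ the smallness of $u_0$ bounds the sup, while inside $B(0,R)$ continuity of $W$ makes the multiplier uniformly close to $1$. To identify the generator on $D(\A_0)$, setting $\alpha := \lambda_1 - W(x) \leq 0$, the identity $\frac{e^{\alpha t}-1}{t} - \alpha = \alpha\int_0^1(e^{\alpha ts}-1)\,ds$ combined with $|e^{\alpha ts}-1| \leq |\alpha| ts$ yields the fine bound $\big|\frac{e^{\alpha t}-1}{t}-\alpha\big| \leq \tfrac12\alpha^2 t$, while a crude estimate gives $\big|\frac{e^{\alpha t}-1}{t}-\alpha\big| \leq 2|\alpha|$. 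The crude bound provides the dominating function $2(W+|\lambda_1|)|u_0| \in E$ needed for dominated convergence in $L^p$ and $\MM$; for $C_0$, combining the fine bound inside a large ball with the crude bound outside (using that $(1+W)u_0 \in C_0$ makes $|\alpha u_0|$ small at infinity) gives uniform convergence $(S_t u_0 - u_0)/t \to \A_0 u_0$ in sup norm.

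For the instantaneous smoothing $S_t u_0 \in D(\A_0)$ for all $t > 0$ and $u_0 \in E$, I would use the elementary bound $\sup_{s\geq 0}(1+s)e^{-st} \leq C_t$ (namely $C_t = \max(1, e^{t-1}/t)$) to obtain the pointwise inequality $(1+W(x))|S_t u_0(x)| \leq C_t e^{\lambda_1 t}|u_0(x)|$, whence $(1+W)S_t u_0 \in E$. Analyticity then follows from the criterion $\sup_{0 < t \leq 1}\|t\A_0 S_t\|_{E\to E} < \infty$ (see \cite{EN}): the analogous estimate $\sup_{s \geq 0}|s-\lambda_1|e^{-st} \leq \frac{1}{et} + |\lambda_1|$ gives $\|\A_0 S_t\|_{E\to E} \leq \big(\tfrac{1}{et}+|\lambda_1|\big)e^{\lambda_1 t}$, so $\|t\A_0 S_t\|_{E\to E}$ is uniformly bounded on $(0,1]$. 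The main subtlety I anticipate is the $C_0(\R^N)$ case, where both strong continuity and the generator-identification step require combining a compactness-plus-continuity argument on large balls with a $C_0$-smallness argument outside; in $\MM(\R^N)$ the situation is actually comfortable, since multiplication preserves support and strong continuity reduces to dominated convergence against $|\mu|$.
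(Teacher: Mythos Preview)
Your proposal is correct and considerably more detailed than what the paper supplies: the paper gives no proof of this lemma at all, simply declaring beforehand that ``it is straightforward to see'' that $(\A_0,D(\A_0))$ generates a $C_0$-semigroup of contractions with the stated explicit formula. Your direct verification of the semigroup law, positivity, contraction bound, strong continuity (with the correct $C_0(\R^N)$ splitting argument), instantaneous smoothing via $\sup_{s\geq0}(1+s)e^{-st}<\infty$, and the analyticity criterion $\sup_{0<t\leq1}\|t\A_0 S_t\|_{E\to E}<\infty$ are all sound.

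One small point worth tightening: your generator identification shows that $D(\A_0)$ is \emph{contained in} the domain of the generator of $(S_t)_{t\geq0}$, but to conclude that $(\A_0,D(\A_0))$ itself is the generator you also need the reverse inclusion. The cleanest way is to observe that for any $\lambda>0$ the resolvent is explicit, namely multiplication by $(\lambda-\lambda_1+W(x))^{-1}$, which maps $E$ onto $D(\A_0)$; since the generator's domain equals the range of its resolvent, equality of domains follows. This is standard for multiplication semigroups and is presumably what the paper has in mind when calling the result straightforward.
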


On the other hand, since $\BB$ is a bounded operator, we readily deduce the following expression of the semigroup $T_{t}$ in terms of the semigroup $S_{t}$ (see for instance K.J.~Engel \& R.~Nagel~\cite[Chapter III.1]{EN}).
Indeed, noting that \eqref{eq:evolution-A} can be written as
$$\frac{dv}{dt} = \A_{0}v(t) + Bv(t), \qquad v(0) = u_{0},$$
the solution $v(t)$ is given by the Duhamel formula
$$T_{t}u_{0} = v(t) = S_{t}u_{0} + \int_{0}^t S_{t - \tau}Bv(\tau)d\tau = S_{t}u_{0} + \int_{0}^t S_{t - \tau}BT_{\tau}u_{0}d\tau .$$
Analogously, the solution of
$$\frac{dz}{dt} = \A_{0}z(t) = Az(t) - Bz(t), \qquad z(0) = u_{0},$$
is given by
$$S_{t}u_{0} = z(t) = T_{t}u_{0} - \int_{0}^t T_{t - \tau}Bz(\tau) d\tau = 
T_{t}u_{0} - \int_{0}^t T_{t - \tau}B S_{\tau}u_{0} d\tau,
$$
so that we have also
$$T_{t}u_{0} = 
S_{t}u_{0} + \int_{0}^t T_{t - \tau}B S_{\tau}u_{0} d\tau. $$
We can thus state the following.

\begin{proposition}\label{prop:wellposedness}
The unbounded operator $\big(\A,D(\A)\big)$ generates a positive $C_{0}$-semigroup $(T_{t})_{t\geq0}$ in $E$, which yields the solutions of equation~\eqref{eq:evolution-A}.
For any $u_{0}\in D(\A)$ the mapping $t\mapsto T_{t}u_{0} =: v(t)$ is the unique classical solution of~\eqref{eq:evolution-A} and for all $u_0\in E$ it is the unique mild solution. Moreover the Duhamel formulas
\begin{equation}\label{eq:Duhamel}
v(t) = T_{t}u_{0} = S_{t}u_{0} + \int_{0}^t S_{t-\tau}\big(K*T_{\tau}u_{0}\big)\,d\tau,
\end{equation}
and
\begin{equation}\label{eq:Duhamel-bis}
v(t) = T_{t}u_{0} =  S_{t}u_{0} + \int_{0}^t T_{t-\tau}\big(K*S_{\tau}u_{0}\big)\,d\tau,
\end{equation}
hold for every $t \geq 0$ and $u_{0} \in E$.
\end{proposition}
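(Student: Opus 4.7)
The approach is to view $\A$ as a bounded perturbation of the explicit multiplication operator $\A_0$ analysed in Lemma~\ref{lem:St-analytique}. First I would verify that $Bu := K*u$ defines a bounded linear operator on each of the Banach spaces $E$ under consideration, with operator norm bounded by $\|K\|_{L^1} = \sigma^2$. For $E = L^p(\R^N)$ this is Young's convolution inequality; for $E = C_0(\R^N)$ one uses the standard fact that convolution with $K \in L^1$ maps $C_0$ into itself (continuity via $L^1$-continuity of translations, decay at infinity from $u\in C_0$ and $K\in L^1$); for $E = \MM(\R^N)$ one observes that $K\ast\mu$ is an $L^1$ function with $\|K\ast\mu\|_{L^1}\leq\|K\|_{L^1}\|\mu\|_{\TV}$, and $L^1(\R^N)$ embeds continuously into $\MM(\R^N)$. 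Since Lemma~\ref{lem:St-analytique} provides that $(\A_0,D(\A_0))$ generates a positive $C_0$-semigroup $(S_t)_{t\geq 0}$ on $E$, the bounded perturbation theorem (see for instance K.J.~Engel \& R.~Nagel~\cite{EN}, Theorem~III.1.3) implies that $\A = \A_0 + B$ with domain $D(\A) = D(\A_0) = D(L)$ generates a $C_0$-semigroup $(T_t)_{t\geq 0}$ on $E$. The general theory of $C_0$-semigroups then yields that for $u_0 \in D(\A)$ the orbit $t \mapsto T_t u_0$ is the unique classical solution of~\eqref{eq:evolution-A}, and for any $u_0 \in E$ it is the unique mild solution.

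For the positivity of $T_t$, I would invoke the Dyson--Phillips series
\[T_t \;=\; \sum_{n=0}^\infty T_t^{(n)},\qquad T_t^{(0)} := S_t,\qquad T_t^{(n+1)} := \int_0^t S_{t-s}\, B\, T_s^{(n)}\, ds,\]
which converges in operator norm uniformly on bounded time intervals. Since $S_t$ is multiplication by the positive function $\exp((\lambda_1 - W(\cdot))t)$, hence a positive operator, and $B$ is convolution with the nonnegative kernel $K\geq 0$, every term $T_t^{(n)}$ is a positive operator on $E$, and so is the limit $T_t$.

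The two Duhamel formulas~\eqref{eq:Duhamel} and~\eqref{eq:Duhamel-bis} are the standard variation-of-constants identities associated respectively with the splittings $\A = \A_0 + B$ and $\A_0 = \A - B$. For $u_0 \in D(\A)$ one verifies directly, differentiating in $t$, that the right hand side of each identity defines a classical solution of~\eqref{eq:evolution-A} with initial datum $u_0$; the uniqueness already established then gives equality on $D(\A)$, and the general case $u_0 \in E$ follows from the density of $D(\A)$ in $E$ together with the continuous dependence of both sides of the two identities on $u_0 \in E$, using only the boundedness of $B$ and the semigroup bounds for $(S_t)$ and $(T_t)$.

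The main subtlety I anticipate is the verification of strong continuity of $(S_t)_{t\geq 0}$ on the non-reflexive space $E = \MM(\R^N)$. However, the explicit form $(S_t\mu)(dx) = e^{(\lambda_1 - W(x))t}\mu(dx)$ combined with the dominated convergence theorem (using the uniform bound $e^{(\lambda_1 - W(x))t} \leq e^{\lambda_1 t}$ for $t$ small and the finiteness of $|\mu|$) yields $\|S_t\mu - \mu\|_{\TV} \to 0$ as $t \to 0^+$ for every $\mu \in \MM(\R^N)$. Consequently $(S_t)_{t\geq0}$ is genuinely strongly continuous on $\MM(\R^N)$ and the whole bounded-perturbation machinery applies equally well on this space, completing the proof.
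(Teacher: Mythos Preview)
Your proposal is correct and follows essentially the same route as the paper: view $\A=\A_0+B$ as a bounded perturbation of the explicit multiplication semigroup $(S_t)_{t\geq0}$ from Lemma~\ref{lem:St-analytique}, invoke the bounded perturbation theorem (the paper simply cites \cite[Chapter~III.1]{EN}), and read off the two Duhamel formulas from the variation-of-constants identities. Your write-up is in fact more detailed than the paper's, which leaves the boundedness of $B$ on the various spaces $E$, the positivity via Dyson--Phillips, and the strong continuity of $(S_t)_{t\geq0}$ on $\MM(\R^N)$ implicit; all of these verifications are correct as you have sketched them.
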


It is noteworthy to observe that Proposition~\ref{prop:wellposedness} is still valid by replacing the choice of one of the above defined Banach spaces $E$ by the intersection $E_1\cap E_2$ of two such Banach spaces, endowed with the norm $\|\cdot\|_{E_1}+\|\cdot\|_{E_2}$.
The uniqueness property in this intersection then guarantees that, if $u_0\in E_1\cap E_2$, then the solutions in $E_1$ and $E_2$ coincide for all time.

\medskip

Now we return to the study of the spectral gap for the operator $\A$ in $L^2(\R^N)$, and we use the Duhamel formula to prove that the semigroup is quasi-compact.

\begin{lemma}\label{rouge}
The semigroup $(T_{t})_{t\geq0}$ is quasi-compact in $L^p(\R ^N)$ for $1 \leq p < \infty$,
and more precisely $\omega_\ess(A)\leq\lambda_1<0$.
\end{lemma}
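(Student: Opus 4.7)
The strategy is to invoke the second Duhamel formula~\eqref{eq:Duhamel-bis} to decompose $T_t = S_t + U_t$, where
\[U_t u_0 := \int_0^t T_{t-\tau}(K*S_\tau u_0)\,d\tau.\]
Since $(S_t u_0)(x)=e^{(\lambda_1-W(x))t}u_0(x)$ and $W\geq 0$, we have $\|S_t\|_{L^p\to L^p}\leq e^{\lambda_1 t}$. Hence, once $U_t$ is shown to be compact on $L^p(\R^N)$ for every $t>0$, I obtain
\[\|T_t\|_\ess\leq\|T_t-U_t\|_{L^p\to L^p}=\|S_t\|_{L^p\to L^p}\leq e^{\lambda_1 t},\]
which gives $\omega_\ess(\A)\leq\lambda_1<0$ by definition of the essential growth bound (the negativity of $\lambda_1$ being given by Theorem~\ref{thm:eigen}).

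The core step is proving that the operator $R_\tau:u_0\mapsto K*S_\tau u_0$ is compact on $L^p(\R^N)$ for each $\tau>0$; this is where the restriction $1\leq p<\infty$ enters, through the Fréchet-Kolmogorov characterization of relatively compact subsets of $L^p$. Uniform $L^p$-boundedness of $\{R_\tau u_0:\|u_0\|_{L^p}\leq 1\}$ is immediate from Young's inequality, and equicontinuity,
\[\|R_\tau u_0(\cdot+h)-R_\tau u_0\|_{L^p}\leq e^{\lambda_1\tau}\|K(\cdot+h)-K\|_{L^1}\|u_0\|_{L^p},\]
follows from the continuity of translations in $L^1$. The main obstacle is tightness at infinity: approximating $K$ by $K_n:=K\1_{B(0,n)}$ in $L^1$, the support of $K_n$ forces $|x|>R$ and $|x-y|\leq n$ to imply $|y|>R-n$, yielding
\[\Big(\int_{|x|>R}|K_n*S_\tau u_0|^p\Big)^{1/p}\leq \|K_n\|_{L^1}\Big(\sup_{|y|>R-n}e^{(\lambda_1-W(y))\tau}\Big)\|u_0\|_{L^p},\]
which tends to zero as $R\to\infty$ thanks to the confining condition $W\to+\infty$; choosing first $n$ large to control $\|K-K_n\|_{L^1}\,e^{\lambda_1\tau}$ and then $R$ large yields the required uniform tightness, hence the compactness of $R_\tau$.

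To deduce compactness of $U_t$, I would argue that $\tau\mapsto T_{t-\tau}R_\tau$ is a norm-continuous map from $(0,t]$ into the space of compact operators, with $\|T_{t-\tau}R_\tau\|_{L^p\to L^p}\leq \sigma^2 Me^{\omega t}e^{\lambda_1\tau}$ integrable on $[0,t]$ (using $\|R_\tau\|_{L^p\to L^p}\leq\sigma^2 e^{\lambda_1\tau}$ together with the $C_0$-semigroup bound $\|T_s\|\leq Me^{\omega s}$), so that its Bochner integral lies in the closed subspace of compact operators. The norm continuity follows from the decomposition
\[T_{t-\tau_1}R_{\tau_1}-T_{t-\tau_2}R_{\tau_2}=(T_{t-\tau_1}-T_{t-\tau_2})R_{\tau_1}+T_{t-\tau_2}(R_{\tau_1}-R_{\tau_2}),\]
where the first term tends to zero in norm because the strong convergence of $T_{t-\tau}$ composed with the compact operator $R_{\tau_1}$ produces norm convergence, and the second vanishes thanks to the norm continuity of $\tau\mapsto S_\tau$ on $(0,\infty)$, itself a consequence of the analyticity of $(S_t)_{t\geq 0}$ recorded in Lemma~\ref{lem:St-analytique}.
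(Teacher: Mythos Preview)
Your proof is correct and follows essentially the same strategy as the paper: both use the Duhamel formula~\eqref{eq:Duhamel-bis} to write $T_t=S_t+$(integral term), observe $\|S_t\|\leq e^{\lambda_1 t}$, and reduce to showing that $u_0\mapsto K*S_\tau u_0$ is compact for each $\tau>0$ via the Riesz--Fr\'echet--Kolmogorov criterion. The only differences are cosmetic: for the tightness at infinity the paper uses a direct H\"older splitting of the convolution, whereas you truncate $K$ to $K_n=K\1_{B(0,n)}$; and for the compactness of the integral the paper simply approximates $\int_0^t$ by $\int_\epsilon^t$, whereas you make the same limit explicit through Bochner integration and norm continuity of $\tau\mapsto T_{t-\tau}(K*S_\tau)$.
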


\begin{proof}
For a given $u_{0} \in L^p(\R^N)$, by the Duhamel formula \eqref{eq:Duhamel-bis} we have
$$T_{t}u_{0} = S_{t} u_{0} + \int_{0}^t T_{t - \tau}\big(K*S_{\tau}u_{0}\big)\,d\tau .$$
We have $\|S_{t}\|_{L^p(\R^N)\to L^p(\R^N)}\leq {\rm e}^{\lambda_{1} t}<1$ for any $t>0$, hence setting
$$R_{t}u_{0} := \int_{0}^t T_{t - \tau}\big(K*S_{\tau}u_{0}\big)\,d\tau $$
it suffices to prove that the operator $R_{t}$ is compact for all $t$ large enough. As a matter of fact, it turns out that $R_{t}$ is compact for any $t > 0$.
To see this, we are going to use the Riesz-Fréchet-Kolmogorov theorem characterizing compact subsets of $L^p(\R^N)$ (see for instance K.~Yosida~\cite[Chapter X, section 1]{YosidaK}). 

First we check that, for any $\tau > 0$, the operator
$$u_{0} \mapsto K*S_{\tau}u_{0}$$ 
is compact on $L^p(\R^N)$.
Let $u_{0}\in L^p(\R^N)$ with $\|u_{0}\|_{L^p}\leq 1$. Observe first that 
$$\|K*S_{\tau}u_{0}\|_{L^p}\leq\|K\|_{L^1}\|S_{\tau}u_{0}\|_{L^p}\leq\sigma^2,$$
and thus the image of the unit ball of $L^p(\R^N)$ is bounded. Now, for $h \in \R^N$ define the translation operator $\tau_{h}$ by setting $\tau_{h}f = f(\cdot+h)$ for $f \in L^p(\R^N)$. We have 
$$\|\tau_{h}(K*S_{\tau}u_{0}) - K*S_{\tau}u_{0}\|_{L^p} = \|(\tau_{h} K - K)*S_{\tau}u_0\|_{L^p}\leq\|\tau_{h}K - K\|_{L^1}\xrightarrow[|h|\to0]{}0,$$
uniformly in $u_{0}$ in the unit ball of $L^p(\R^N)$. Next, by H\"older's inequality we have ($\frac 1p+\frac 1{p'}=1$)
\begin{align*}
|(K*S_{\tau}u_{0})(x)| & = 
\int_{\R^N} K(x-y)^{1/p'} K(x-y)^{1/p}{\rm e}^{(\lambda_{1} - W(y))\tau}u_{0}(y)\,dy\\
&\leq \left(\int_{\R^N} K(x-y) dy\right) ^{1/p'}\left(\int_{\R^N} K(x-y) {\rm e}^{p(\lambda_{1} - W(y))\tau}|u_{0}(y)|^p\,dy\right)^{1/p}\\
& \leq \sigma^{2/p'} \left(\int_{\R^N} K(x-y) \,{\rm e}^{p(\lambda_{1} - W(y))\tau}|u_{0}(y)|^p\,dy\right)^{1/p}.
\end{align*}
Hence by the Fubini-Tonelli theorem we may write (noting that $|x - y| \geq R/2$ when $|x| \geq R$ and $|y|\leq R/2$)
\begin{align*}
\int_{|x|\geq R}|(K*S_{\tau}u_{0})(x)|^pdx & \leq 
\sigma^{2p/p'} \int_{|x|\geq R}\int_{\R^N} K(x-y)\, {\rm e}^{p(\lambda_{1} - W(y))\tau}\vert u_{0}(y)\vert ^p\,dydx \\
&\leq \sigma^{2p/p'}  \int_{|x|\geq R} \int_{|y| < R/2} K(x-y)\, \vert u_{0}(y)\vert ^p \,dydx\\
& \hskip8mm + \sigma^{2p/p'}  \int_{|x|\geq R} \int_{|y|\geq R/2}K(x-y)\, {\rm e}^{-p\tau W(y)} \vert u_{0}(y)\vert ^p \,dydx\\
&\leq \sigma^{2p/p'} \int_{|z|\geq R/2} K(z)\,dz + \sigma^{2p}\sup_{|y|\geq R/2}{\rm e}^{-p\tau W(y)}\xrightarrow[R\to+\infty]{}0,
\end{align*}
uniformly in $u_{0}$ in the unit ball of $L^p(\R^N)$. Using the Riesz-Fréchet-Kolmogorov theorem we conclude that the mapping  $u_0\mapsto K*S_{\tau}u_{0}$ is compact.

Finally, since $T_{t-\tau}$ is a bounded operator, we infer that for any $0 < \epsilon \leq \tau \leq t$, the operators
$$u_{0}\mapsto T_{t-\tau}(K*S_{\tau}u_{0}) \quad\text{and}\quad
u_{0} \mapsto \int_{\epsilon}^t T_{t -\tau}(K*S_\tau u_{0})d\tau,$$
are compact operators on $L^p(\R^N)$. Since, as $\epsilon \to 0$ we have
$$\int_{\epsilon}^t T_{t -\tau}(K*S_\tau u_{0})d\tau \to \int_{0}^t T_{t -\tau}(K*S_\tau u_{0})d\tau = R_{t}u_{0},$$
uniformly on the unit ball of $L^p(\R^N)$, we conclude that $R_{t}$ is compact.
\end{proof}

Now we can state our convergence result.

\begin{corollary} \label{etleclochard}
There exist $C,a>0$ such that, for all $u_{0} \in L^2(\R^N)$ and all $t \geq 0$, we have
\begin{equation}\label{eq:conv-L2}
\left\|T_{t}u_{0} - \langle u_{0},\phi\rangle \phi \right\|_{L^2}\leq C\, {\rm e}^{-at}\, \left\|u_{0}-\langle u_{0},\phi\rangle\phi\right\|_{L^2}.
\end{equation}
If additionally $b_\epsilon>\bar\Phi:=\sup_{[0,\infty)}\Phi$, where $b_\epsilon$ and $\Phi$ are defined in~\eqref{eq:b-eps} and~\eqref{def:Phi} respectively,
then one can choose any $a<a_*=:b_\epsilon-\bar\Phi$.
\end{corollary}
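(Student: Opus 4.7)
The plan is to combine the quasi-compactness of $(T_t)_{t\geq 0}$ established in Lemma~\ref{rouge} with the self-adjointness of $(A,D(A))$ in $L^2(\R^N)$, so as to produce a spectral gap above the essential spectrum, and then to convert this gap into exponential decay via the spectral theorem.

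My first step is to observe that, since $A\phi=0$ and $A$ is self-adjoint on $L^2(\R^N)$, the orthogonal projector $Pu_0:=\langle u_0,\phi\rangle\phi$ onto $\R\phi$ commutes with $T_t$: the invariance $T_t(\R\phi)\subset\R\phi$ is immediate from $T_t\phi=\phi$, and the invariance of $(\R\phi)^\perp$ follows by self-adjointness of $T_t$. Setting $v_0:=u_0-\langle u_0,\phi\rangle\phi\in(\R\phi)^\perp$, we have $T_tu_0-\langle u_0,\phi\rangle\phi=T_tv_0$, so it suffices to prove $\|T_tv_0\|_{L^2}\leq C\,{\rm e}^{-at}\|v_0\|_{L^2}$.

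The second step is to extract the spectral gap. From Lemma~\ref{rouge} we have $\omega_\ess(A)\leq \lambda_1<0$, so by \cite[Corollary IV.2.11]{EN} the set $\sigma(A)\cap(\lambda_1,+\infty)$ consists of finitely many isolated eigenvalues of finite multiplicity, all real by self-adjointness. One of them is $0$, whose eigenspace is $\R\phi$ by Theorem~\ref{thm:eigen}(iii). Defining $-a$ as the largest element of $(\sigma(A)\setminus\{0\})\cap(\lambda_1,0]$ if this finite set is non-empty, and $-a:=\lambda_1$ otherwise, I obtain $\sigma(A)\setminus\{0\}\subset(-\infty,-a]$ with $a>0$. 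The spectral theorem applied to $A$, together with the fact that the spectral measure of $v_0\in(\R\phi)^\perp$ carries no mass at $0$, then yields $\|T_tv_0\|_{L^2}\leq{\rm e}^{-at}\|v_0\|_{L^2}$, and in particular one can take $C=1$.

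For the quantitative part, assuming $b_\epsilon>\bar\Phi$, I would invoke Theorem~\ref{thm:eigen}(i), giving $\lambda_1\leq-b_\epsilon$, together with Corollary~\ref{cor:spectral-gap}, which ensures $\tilde\lambda\geq-\bar\Phi$ for every eigenvalue $\tilde\lambda\neq\lambda_1$ of $L$. Since the eigenvalues of $A=-L+\lambda_1I$ take the form $\lambda_1-\tilde\lambda$, the ones distinct from $0$ are all bounded above by $\lambda_1+\bar\Phi\leq-b_\epsilon+\bar\Phi=-a_*$. Moreover $\omega_\ess(A)\leq\lambda_1\leq -b_\epsilon\leq -a_*$ thanks to $\bar\Phi\geq\Phi(0)\geq 0$. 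It follows that $\sigma(A)\setminus\{0\}\subset(-\infty,-a_*]$, and the same functional-calculus argument gives the estimate for any $a\leq a_*$, hence a fortiori for any $a<a_*$. The main obstacle has already been dealt with in the preceding sections: it is precisely the quasi-compactness of Lemma~\ref{rouge} that isolates $0$ from the remainder of the spectrum, and Corollary~\ref{cor:spectral-gap} that quantifies this separation.
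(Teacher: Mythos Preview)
Your proof is correct and follows essentially the same route as the paper: quasi-compactness from Lemma~\ref{rouge} isolates $0$ in $\sigma(A)$, simplicity of the zero eigenvalue (Theorem~\ref{thm:eigen}(iii)) ensures the rest of the spectrum stays below some $-a<0$, and the quantitative part is handled exactly as in the paper via Theorem~\ref{thm:eigen}(i) and Corollary~\ref{cor:spectral-gap}. The only difference is cosmetic: the paper restricts the semigroup to $(\R\phi)^\perp$ and invokes $\omega_0=\s$ for analytic semigroups, whereas you apply the spectral theorem for the self-adjoint operator $A$ directly, which even yields $C=1$.

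One minor imprecision: from $\omega_\ess(A)\leq\lambda_1$ and \cite[Corollary IV.2.11]{EN} you only get finiteness of $\sigma(A)\cap[w,0]$ for each fixed $w>\omega_\ess(A)$, not of $\sigma(A)\cap(\lambda_1,0]$ as you assert (eigenvalues could in principle accumulate at $\lambda_1$ if $\omega_\ess(A)=\lambda_1$). This does not affect your argument, since taking any $w\in(\lambda_1,0)$ already shows $0$ is isolated in $\sigma(A)$, which is all you need; just replace $(\lambda_1,0]$ by $[w,0]$ in your definition of $-a$.
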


\begin{proof}
We proved that $\omega_\ess(A)\leq\lambda_1<0$, and we have the identity $\omega_\ess(A_{|(\R\varphi)^\perp})=\omega_\ess(A)$ (use for instance~\cite[Proposition IV.2.12]{EN}).
We deduce that for any $w\in(\lambda_1,0)$ the set $\sigma(A_{|(\R\varphi)^\perp})\cap[w,0]$ is finite and made only of eigenvalues.
Since the kernel of $A$ is the space generated by $\varphi$, zero is not an eigenvalue of $A_{|(\R\varphi)^\perp}$ and consequently there exists $a>0$ such that $\sigma(A_{|(\R\varphi)^\perp})\subset(-\infty,-a)$.
This implies that
\[\omega_0(A_{|(\R\varphi)^\perp})=\s(A_{|(\R\varphi)^\perp})<-a\]
and accordingly the existence of $C>0$ such that for all $u_0\in(\R\varphi)^\perp$
\[\left\|T_{t}u_{0}\right\|_{L^2}\leq C\, {\rm e}^{-at}\, \left\|u_{0}\right\|_{L^2}.\]
For $u_0\not\in(\R\varphi)^\perp$, applying this stability result to $u_{0}-\langle u_{0},\phi\rangle\phi\in(\R\varphi)^\perp$ gives~\eqref{eq:conv-L2} since $T_t\varphi=\varphi$ for all $t\geq0$.

For proving the second part of Corollary~\ref{etleclochard}, it suffices to check that, if $b_\epsilon>\bar\Phi$, then there is no eigenvalue of $A$ in the interval $(\bar\Phi-b_\epsilon,0)$.
Corollary~\ref{cor:spectral-gap} ensures that there is no non-zero eigenvalue above $\bar\Phi+\lambda_1$, and from Theorem~\ref{thm:eigen} we know that $\lambda_1\leq -b_\epsilon$.
So the result is proved.
\end{proof}

\subsection{Analysis in $C_{0}(\R^N)$ and $\MM(\R^N)$}

We start by checking that $(T_{t})_{t\geq0}$ is quasi-compact in $C_{0}(\R^N).$

\begin{lemma}
The semigroup $(T_{t})_{t\geq0}$ is quasi-compact in $C_{0}(\R^N).$
\end{lemma}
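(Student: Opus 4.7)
The plan is to reproduce the strategy of Lemma~\ref{rouge} in the space $C_0(\R^N)$. Using the Duhamel formula~\eqref{eq:Duhamel-bis}, I split
$$T_t = S_t + R_t, \qquad R_t u_0 := \int_0^t T_{t-\tau}\bigl(K * S_\tau u_0\bigr)\,d\tau,$$
and aim to check that $\|S_t\|_{C_0(\R^N) \to C_0(\R^N)}$ decays exponentially while $R_t$ is compact. Since $W \geq 0$ by Assumption~\ref{ass:potential} and $\lambda_1 < 0$ by Theorem~\ref{thm:eigen}(i), the explicit formula $(S_t u_0)(x) = \mathrm{e}^{(\lambda_1 - W(x))t} u_0(x)$ from Lemma~\ref{lem:St-analytique} immediately yields $\|S_t\|_{C_0(\R^N) \to C_0(\R^N)} \leq \mathrm{e}^{\lambda_1 t}$, providing the desired decay.

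The main work is to prove that $R_t$ is compact on $C_0(\R^N)$. The first step is to show, for each fixed $\tau > 0$, that the operator $C_\tau : u_0 \mapsto K * S_\tau u_0$ is compact from $C_0(\R^N)$ to itself. I would verify this via the Arzelà-Ascoli theorem applied to the image of the closed unit ball, checking: (i) the uniform bound $\|C_\tau u_0\|_\infty \leq \sigma^2 \mathrm{e}^{\lambda_1 \tau}$; (ii) equicontinuity, coming from
$$|(C_\tau u_0)(x) - (C_\tau u_0)(x')| \leq \mathrm{e}^{\lambda_1 \tau}\, \|K - K(\,\cdot\,-(x-x'))\|_{L^1}$$
and the continuity of translations in $L^1$; and (iii) uniform vanishing at infinity, the critical ingredient discussed below. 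Once $C_\tau$ is compact, $T_{t-\tau} C_\tau$ is compact as well, and a standard argument (strong continuity of $T_s$ composed with the compact operator $C_\tau$ gives norm-continuity of $\tau \mapsto T_{t-\tau} C_\tau$ on $[\delta, t]$) shows that the truncated Riemann integral $R_t^\delta := \int_\delta^t T_{t-\tau} C_\tau\,d\tau$ is compact for every $\delta > 0$.

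Then I would pass from $R_t^\delta$ to $R_t$ in operator norm. By strong continuity, $\|T_s\|_{C_0(\R^N) \to C_0(\R^N)} \leq M$ uniformly on the compact interval $[0,t]$, so
$$\|R_t - R_t^\delta\|_{C_0(\R^N) \to C_0(\R^N)} \leq \int_0^\delta M\, \|K\|_{L^1}\, \mathrm{e}^{\lambda_1 \tau}\,d\tau \leq M\sigma^2\delta \xrightarrow[\delta \to 0]{} 0,$$
so $R_t$ is a norm limit of compact operators and hence is itself compact. This yields $\|T_t\|_\ess \leq \|T_t - R_t\|_{C_0(\R^N)\to C_0(\R^N)} = \|S_t\|_{C_0(\R^N)\to C_0(\R^N)} \leq \mathrm{e}^{\lambda_1 t}$, and thus $\omega_\ess(A) \leq \lambda_1 < 0$, which is the sought quasi-compactness.

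The principal obstacle is point (iii), the equi-decay of $(C_\tau u_0)(x)$ to zero as $|x|\to\infty$ uniformly in $\|u_0\|_\infty \leq 1$: unlike in the $L^p$ case of Lemma~\ref{rouge}, a bound on $\|u_0\|_\infty$ alone does not enforce any mass confinement for $u_0$. I would overcome it by splitting the convolution integral at a threshold $R$,
$$(C_\tau u_0)(x) = \int_{|y|\leq R} K(x-y)\mathrm{e}^{(\lambda_1-W(y))\tau} u_0(y)\,dy + \int_{|y|>R} K(x-y)\mathrm{e}^{(\lambda_1-W(y))\tau} u_0(y)\,dy,$$
bounding the $|y|>R$ part by $\sigma^2\sup_{|y|>R} \mathrm{e}^{-\tau W(y)}$, which vanishes as $R\to\infty$ thanks to the confining property of $W$, and the $|y|\leq R$ part by $\|u_0\|_\infty\int_{|z|\geq |x|-R} K(z)\,dz$, which vanishes as $|x|\to\infty$ by integrability of $K$. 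The strict positivity of $\tau$ is essential here, which is precisely why $R_t^\delta$ must first be introduced with $\delta>0$ before taking $\delta \to 0$.
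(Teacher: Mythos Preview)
Your proof is correct and follows the same approach as the paper: the same Duhamel splitting $T_t=S_t+R_t$, the same reduction to compactness of $u_0\mapsto K*S_\tau u_0$ via Arzel\`a--Ascoli, and the same $\delta\to0$ approximation for $R_t$. The only cosmetic difference is that the paper handles the uniform decay at infinity in one line by observing $|K*S_\tau u_0|\leq K*\mathrm{e}^{-\tau W}\in C_0(\R^N)$, whereas you prove this domination explicitly by splitting at $|y|=R$.
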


\begin{proof}
Using the Duhamel formula~\eqref{eq:Duhamel-bis} in a similar way as we did in Lemma~\ref{rouge}, we only have to prove that  $u_{0} \mapsto K*S_{\tau}u_{0}$ is compact for any $\tau > 0$. 
This property is a consequence of Ascoli's theorem: indeed we have, for any $u_{0} \in C_{0}(\R^N)$ with $\|u_0\|_{L^\infty} \leq 1$,
$$\|\tau_{h}(K*S_{\tau}u_0)-K*S_{\tau}u_0\|_{L^\infty} \leq \|\tau_{h} K - K\|_{L^1}\xrightarrow[|h|\to0]{}0,$$
and 
$$|K*S_{\tau}u_0|\leq K*{\rm e}^{-\tau W}\in C_{0}(\R^N),$$
uniformly in $u_{0}$ with $\|u_{0}\|_{L^\infty} \leq 1$. 
The proof of the lemma is complete.
\end{proof}

Contrary to the $L^2$ case, we cannot argue through the orthogonal space of $\R\varphi$.
Yet, the positivity of $\varphi$ combined to the quasi-compactness of $(T_t)_{t\geq0}$ is enough to prove the following results.

\begin{corollary}\label{villoise}
The eigenfunction $\phi$ belongs to $L^1(\R^N),$ and there exist $C,a>0$ such that, for all $u_{0}\in C_{0}(\R^N)$ and all $t \geq 0$,
$$\left\|T_{t}u_{0}-\langle u_{0},\phi\rangle\phi\right\|_{L^\infty}\leq C\, {\rm e}^{-at}\left\|u_{0}-\langle u_{0},\phi\rangle\phi\right\|_{L^\infty}.$$
\end{corollary}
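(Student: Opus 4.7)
The plan is to combine the quasi-compactness of $(T_t)_{t\geq0}$ on $C_0(\R^N)$ just proved with the positivity of the semigroup (and its irreducibility, inherited from the strong maximum principle of Lemma~\ref{lem:Max-principle}) to obtain a spectral decomposition at $0$; then to identify the rank-one projection against the $L^2$ theory of Corollary~\ref{etleclochard} in order to conclude $\phi\in L^1(\R^N)$.

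By the preceding lemma $\omega_\ess(A)<0$, so by the principle recalled before Lemma~\ref{rouge}, above this level $\sigma(A)$ is a finite set of eigenvalues of finite algebraic multiplicity. We know $0\in\sigma(A)$ since $A\phi=0$ with $\phi\in C_0(\R^N)$, and by Theorem~\ref{thm:eigen}~(iii) the geometric eigenspace of $0$ is $\R\phi$. The main obstacle is to rule out any other eigenvalue of $A$ with real part $\geq-a$ for some $a>0$, and to prove that $0$ is algebraically simple. I would achieve this via a Perron--Frobenius type result for positive, irreducible, quasi-compact $C_0$-semigroups on a Banach lattice (see \cite{Nagel86,EN}): positivity of $(T_t)$ is manifest from Duhamel~\eqref{eq:Duhamel-bis} together with $K\geq0$ and $S_t\geq0$; the strict positivity $\phi>0$ on $\R^N$ provides a quasi-interior fixed vector; irreducibility follows by iterating the Duhamel formula and using that $K>0$ on $B(0,r_0)$ to propagate positivity, so that $T_tu_0>0$ on all of $\R^N$ when $u_0\geq0$, $u_0\not\equiv0$ and $t$ is large enough.

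This yields a spectral gap of width $a>0$ and a rank-one spectral projection $P$ onto $\R\phi$, commuting with $T_t$, together with the estimate $\|T_t(I-P)\|_{C_0\to C_0}\leq C\,\mathrm{e}^{-at}$. Since $P$ is rank one with range $\R\phi$, necessarily $Pu_0=\langle\mu,u_0\rangle\,\phi$ for some $\mu\in\MM(\R^N)$ with $\langle\mu,\phi\rangle=1$. To identify $\mu$, I pick any $u_0\in C_c(\R^N)\subset C_0(\R^N)\cap L^2(\R^N)$. By the uniqueness of solutions in $C_0\cap L^2$ noted after Proposition~\ref{prop:wellposedness}, $T_tu_0$ coincides in both spaces. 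The $C_0$ decomposition gives $T_tu_0\to\langle\mu,u_0\rangle\,\phi$ uniformly, hence pointwise everywhere; Corollary~\ref{etleclochard} gives $T_tu_0\to\langle u_0,\phi\rangle\,\phi$ in $L^2(\R^N)$, hence pointwise a.e.\ along a subsequence. Since $\phi>0$, the two pointwise limits force $\langle\mu,u_0\rangle=\int_{\R^N} u_0\,\phi$ for every $u_0\in C_c(\R^N)$. Therefore $\mu$ agrees on $C_c(\R^N)$ with the locally finite measure $\phi\,dx$, so $\mu=\phi\,dx$ as Borel measures; since $\mu\in\MM(\R^N)$ is finite, $\int_{\R^N}\phi=\|\mu\|_\MM<\infty$, proving $\phi\in L^1(\R^N)$.

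With $\phi\in L^1(\R^N)$ established, $\langle u_0,\phi\rangle$ is well defined for every $u_0\in C_0(\R^N)$ and $Pu_0=\langle u_0,\phi\rangle\,\phi$. Using $(I-P)^2=I-P$ we conclude, for all $u_0\in C_0(\R^N)$ and $t\geq0$,
\[
\|T_tu_0-\langle u_0,\phi\rangle\,\phi\|_{L^\infty}=\|T_t(I-P)(I-P)u_0\|_{L^\infty}\leq C\,\mathrm{e}^{-at}\,\|u_0-\langle u_0,\phi\rangle\,\phi\|_{L^\infty},
\]
which is the announced estimate.
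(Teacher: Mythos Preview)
Your proof is correct and follows essentially the same route as the paper: invoke an abstract result for positive quasi-compact semigroups on a Banach lattice to obtain a finite-rank projection $P$ with $\|T_t-P\|_{C_0\to C_0}\leq C\,{\rm e}^{-at}$, then identify $P$ via the $L^2$ convergence of Corollary~\ref{etleclochard} on the dense subspace $C_c(\R^N)$, and read off $\phi\in L^1(\R^N)$ from the boundedness of $P$. The only notable difference is that you appeal to irreducibility (which you correctly derive from Lemma~\ref{lem:Max-principle}) to get a rank-one projection directly, whereas the paper cites a result (\cite[Corollary~B-IV-2.11]{Nagel86}) requiring only quasi-compactness together with the existence of the strictly positive fixed vector $\phi$; this spares the irreducibility discussion and yields a finite-rank projection whose rank-one nature then falls out of the identification step.
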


\begin{proof}
We have proved that $(T_{t})_{t\geq0}$ is quasi-compact and we know that $0$ is an eigenvalue of $\A$ associated to a strictly positive eigenfunction $\phi$.
We deduce from Corollary B-IV-2.11 in~\cite{Nagel86} that there exists a positive projection $\PP$ of finite rank and suitable constants $C,a>0$ such that, for all $t \geq 0$,
$$\|T_{t} - \PP\|_{C_{0}(\R^N) \to C_{0}(\R^N)}\leq C \, {\rm e}^{-at}.$$
Let us now identify this projection.
From Corollary~\ref{etleclochard} we deduce that for all $u_0\in C_c(\R^N)$, $\PP u_0=\langle u_0,\varphi\rangle\varphi$.
Since $\PP$ is a projection, this implies that for all $u_0\in C_c(\R^N)$, $|\langle u_0,\varphi\rangle|\leq{\|u_0\|}_{L^\infty}/{\|\varphi\|}_{L^\infty}$.
Consequently $\varphi$ belongs to $L^1(\R^N)$, and the bounded operator $u_0\mapsto \langle u_0,\varphi\rangle\varphi$ on $C_0(\R^N)$ coincides with $\PP$ on the dense subset $C_c(\R^N)$.
So they are necessarily equal on $C_0(\R^N)$ and the proof is complete.
\end{proof}

\begin{corollary}\label{epoque}
There exist $C,a>0$ such that, for all $\mu\in \MM(\R^N)$ and all $t\geq0$,
$$\left\|T_{t}\mu - \langle\mu,\phi\rangle\phi\right\|_\TV\leq C{\rm e}^{-at}\left\|\mu-\langle \mu,\phi\rangle\phi\right\|_\TV.$$
\end{corollary}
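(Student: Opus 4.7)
The plan is to deduce Corollary~\ref{epoque} from Corollary~\ref{villoise} by a duality argument between $C_{0}(\R^{N})$ and $\MM(\R^{N})=(C_{0}(\R^{N}))'$. The first step is to establish the adjoint identity
\[
\langle T_{t}\mu, f\rangle = \langle \mu, T_{t}f\rangle, \qquad \mu \in \MM(\R^{N}),\ f \in C_{0}(\R^{N}),\ t \geq 0.
\]
The two building blocks of the semigroup are both symmetric under the $\MM/C_{0}$ pairing: the multiplication part $S_{t}u = {\rm e}^{(\lambda_{1}-W)t}u$ because it is just multiplication by a bounded continuous function, and the convolution $K*$ because $K$ is even by Assumption~\ref{ass:J-symmetric}. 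Iterating the Duhamel formula~\eqref{eq:Duhamel-bis} expresses $T_{t}$ as a Dyson series whose individual terms are compositions of $S_{\tau}$'s and $K*$'s; applying Fubini to each term then gives the adjoint identity.

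Next, I would set $\nu := \mu - \langle \mu, \phi\rangle \phi$. Since $\|\phi\|_{L^{2}} = 1$, we have $\langle \nu, \phi\rangle = 0$; combined with $T_{t}\phi = \phi$ (here $\phi \in C_{0}(\R^{N})$, so we can apply the adjoint identity to it), this yields
\[
T_{t}\mu - \langle \mu, \phi\rangle \phi = T_{t}\nu.
\]
Dualizing the total-variation norm against $f \in C_{0}(\R^{N})$ with $\|f\|_{L^\infty} \leq 1$, and using both $\langle \nu, \phi\rangle = 0$ and $T_{t}\phi = \phi$ to substitute $T_t f$ by $T_{t}(f - \langle f, \phi\rangle \phi)$ under the pairing, Corollary~\ref{villoise} applied to $f - \langle f, \phi\rangle \phi \in C_{0}(\R^{N})$ provides the $L^{\infty}$-decay
\[
\|T_{t}(f - \langle f, \phi\rangle \phi)\|_{L^\infty} \leq C\,(1 + \|\phi\|_{L^{1}}\|\phi\|_{L^\infty})\, {\rm e}^{-at}\|f\|_{L^\infty}.
\]
Since $\phi \in L^{1}(\R^{N}) \cap C_{0}(\R^{N})$ by Corollary~\ref{villoise}, the constant in parentheses is finite. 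Taking the supremum over such $f$ and noting $\|\nu\|_{\TV} = \|\mu - \langle \mu, \phi\rangle \phi\|_{\TV}$ closes the proof.

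The main technical hurdle is the rigorous justification of the adjoint identity. It is tempting to invoke the general theory of dual semigroups, but $(T_{t})_{t\geq 0}$ is not expected to be strongly continuous on $(\MM(\R^{N}), \|\cdot\|_{\TV})$ at $t=0$ (consider initial Dirac masses), so Hille--Yosida does not apply directly on $\MM(\R^N)$ in the total variation norm. The perturbative Dyson-series route circumvents this obstacle by working only with the explicit formula~\eqref{eq:Duhamel-bis}, whose manifestly symmetric building blocks respect the $\MM/C_{0}$ pairing. An alternative would be to repeat the quasi-compactness argument of Lemma~\ref{rouge} in $\MM(\R^{N})$ via the Riesz--Kolmogorov theorem applied to $\mu \mapsto K*S_{\tau}\mu$ (whose image lies in $L^1(\R^N)$), and then invoke the same abstract result~\cite[Corollary B-IV-2.11]{Nagel86} used in Corollary~\ref{villoise}; the duality approach is shorter and reuses the already-proven $C_{0}$-statement.
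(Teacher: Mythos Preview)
Your proposal is correct and follows essentially the same duality approach as the paper: the paper's proof is a one-liner observing that $\MM(\R^N)=(C_0(\R^N))'$ with the total-variation norm as the dual norm, so Corollary~\ref{villoise} applied to the dual semigroup $(T_t^*)_{t\ge0}=(T_t)_{t\ge0}$ yields the claim directly. You unpack precisely what this means---justifying the adjoint identity via the Dyson series built from the symmetric pieces $S_\tau$ and $K*$, and making the projection manipulation explicit---which is a legitimate and careful way to fill in what the paper leaves implicit, including the point that strong continuity in $\|\cdot\|_{\TV}$ is not needed for the argument.
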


Notice that this implies also the exponential convergence in $L^1(\R^N)$, since for $u_{0} \in L^1(\R^N)$ and $\mu := u_{0}(x)dx$ we have ${\|\mu\|}_\TV = {\|u_0\|}_{L^1}$.

\begin{proof}
Due to the duality $\MM(\R^N) = (C_{0}(\R^N))'$ and the definition of the total variation norm as a duality norm
$$\left\|\mu\right\|_\TV=\sup_{f\in C_{0},\,{\|f\|}_{L^\infty}\leq1}\langle \mu,f\rangle,$$
the result is a  consequence of Corollary~\ref{villoise} applied to the dual semigroup ${(T_{t}^*)}_{t\geq0} = {(T_{t})}_{t \geq 0}$.
\end{proof}

\subsection{Study in $L^p(\R^N)$ with $1\leq p<\infty$}

We have proved that $\phi\in L^1(\R^N)\cap C_{0}(\R^N),$ so that $\phi\in L^p(\R^N)$ for all $p\in[1,\infty].$
Also recall that in Lemma~\ref{rouge} we have shown that $(T_{t})_{t\geq0}$ is quasi-compact in $L^p(\R^N)$ for any $p\in[1,\infty)$. 

\begin{corollary}
Let $p\in[1,\infty)$. There exist $C,a>0$ such that, for all $u_{0}\in L^p(\R^N)$ and all $t\geq0$,
$$\left\|T_{t}u_{0} - \langle u_{0},\phi\rangle\phi\right\|_{L^p}\leq C{\rm e}^{-at}\left\|u_{0}-\langle u_{0},\phi\rangle\phi\right\|_{L^p}.$$
\end{corollary}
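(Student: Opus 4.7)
The plan is to follow the strategy already used for $C_{0}(\R^N)$ in Corollary \ref{villoise}. By Lemma \ref{rouge}, the semigroup $(T_t)_{t\geq 0}$ is quasi-compact in $L^p(\R^N)$ for every $p \in [1,\infty)$, and we know that $0$ is an eigenvalue associated with the strictly positive eigenfunction $\phi$. Invoking Corollary B-IV-2.11 in \cite{Nagel86}, one obtains a positive projection $\PP_p$ of finite rank together with constants $C, a > 0$ such that
$$\|T_t - \PP_p\|_{L^p \to L^p} \leq C\,{\rm e}^{-at}, \qquad \forall t \geq 0.$$

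The only point requiring care is the identification of $\PP_p$. Since $\phi \in L^1 \cap L^\infty$ by Corollary \ref{villoise}, we have $\phi \in L^{p'}(\R^N)$, so the pairing $\langle u_0,\phi\rangle$ is well-defined for all $u_0 \in L^p(\R^N)$, and the operator $u_0 \mapsto \langle u_0,\phi\rangle \phi$ is bounded on $L^p(\R^N)$. For $u_0 \in C_c(\R^N)$, which is dense in $L^p(\R^N)$ and contained in $L^p \cap L^2$, both the $L^p$-convergence $T_t u_0 \to \PP_p u_0$ and the $L^2$-convergence $T_t u_0 \to \langle u_0,\phi\rangle\phi$ (Corollary \ref{etleclochard}) hold. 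Extracting a common almost everywhere convergent subsequence yields $\PP_p u_0 = \langle u_0,\phi\rangle\phi$ pointwise a.e. Two bounded operators on $L^p(\R^N)$ that coincide on the dense subspace $C_c(\R^N)$ are equal, hence $\PP_p u_0 = \langle u_0, \phi \rangle \phi$ for all $u_0 \in L^p(\R^N)$.

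To finish, I would use that $T_t \phi = \phi$ and that, since $\|\phi\|_{L^2}=1$, the element $u_0 - \langle u_0,\phi\rangle\phi$ lies in the kernel of $\PP_p$. Applying the exponential estimate to this difference gives
$$\|T_t u_0 - \langle u_0,\phi\rangle\phi\|_{L^p} = \|(T_t - \PP_p)(u_0 - \langle u_0,\phi\rangle\phi)\|_{L^p} \leq C\,{\rm e}^{-at}\|u_0 - \langle u_0,\phi\rangle\phi\|_{L^p},$$
which is exactly the claimed bound. The mildest obstacle here is really just the identification of the projection; since the abstract theorem supplies existence without a formula, the bridge through the $L^2$ result and a density argument is what makes it explicit. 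No new assumption or technical device is needed beyond those already established.
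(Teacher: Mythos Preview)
Your proof is correct, but it takes a genuinely different route from the paper's. You directly replay the $C_0$ argument from Corollary~\ref{villoise}: quasi-compactness in $L^p$ (Lemma~\ref{rouge}) plus the positive eigenfunction $\phi$ feed into Corollary~B-IV-2.11 of~\cite{Nagel86}, and then you identify the abstract projection via density and the $L^2$ result. The paper instead defines $\PP u_0 := \langle u_0,\phi\rangle\phi$ from the start and, knowing already that $\|T_t-\PP\|_{L^1\to L^1}$ and $\|T_t-\PP\|_{L^2\to L^2}$ decay like $C{\rm e}^{-at}$ (Corollaries~\ref{epoque} and~\ref{etleclochard}), obtains the range $1<p<2$ by Riesz--Thorin interpolation, and then $2<p<\infty$ by duality using self-adjointness of $T_t$ and $\PP$. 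Your approach is more uniform---one template for every $p$, with no need to invoke interpolation theory---while the paper's argument is shorter once the endpoint cases are in hand and has the minor advantage of making the constants $C,a$ manifestly independent of $p$ (they are inherited from the $L^1$ and $L^2$ bounds), whereas in your version the abstract theorem produces $C,a$ that could in principle depend on $p$.
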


\begin{proof} For $u_{0} \in L^p(\R^N)$ denote by $\PP u_{0} := \langle u_{0},\phi \rangle\phi$. We know that there exist two constants $C > 0$ and $a > 0$ such that
$$\|T_{t} - \PP\|_{L^1(\R^N) \to L^1(\R^N)} \leq C {\rm e}^{-at}
\quad\text{and} \quad
\|T_{t} - \PP\|_{L^2(\R^N) \to L^2(\R^N)} \leq C {\rm e}^{-at}.
$$
Therefore, for $1 < p < 2$, by interpolation (see for instance L.~Tartar~\cite[Chapter 21, Theorem 21.2]{TartarL-Sobolev-Interpolation}) we have
$$\|T_{t} - \PP\|_{L^p(\R^N) \to L^p(\R^N)} \leq \|T_{t} - \PP\|_{L^1(\R^N) \to L^1(\R^N)}^{1 - \theta} \|T_{t} - \PP\|_{L^2(\R^N) \to L^2(\R^N)}^\theta,$$
where $\theta \in (0,1)$ is defined by $1/p = (1 - \theta) + (\theta/2) = 1 - (\theta/2)$. Thus we have
$$\|T_{t} - \PP\|_{L^p(\R^N) \to L^p(\R^N)} \leq C\, {\rm e}^{-at}.$$
When $2 < p < \infty$, then $p' := p/(p - 1) \in (1,2)$, and since $T_{t}^* = T_{t}$ and $\PP^* = \PP$ on the subspace $L^1(\R^N) \cap C_{0}(\R^N)$, which is dense both in $L^p(\R^N)$ and $L^{p'}(\R^N)$, the above inequality applied to $p'$ shows that
$$\|T_{t} - \PP\|_{L^p(\R^N) \to L^p(\R^N)} = \|T_{t}^* - \PP^*\|_{L^{p'}(\R^N) \to L^{p'}(\R^N)}\leq C\, {\rm e}^{-at},$$
which concludes the proof.
\end{proof}


\section{Long time asymptotics of the replicator-mutator model}\label{sec:rep-mut}

We begin by giving the definition of what we shall call classical and mild solutions for the nonlinear replicator-mutator equation \eqref{eq:rep-mut}. Let us denote by $E_{+}$ the positive cone of the Banach lattice $E$ and define
$$E(W):=\{u\in E: Wu\in E\}$$
endowed with the norm
$$\left\|u\right\|_{E(W)}:={\|u\|}_E+{\|Wu\|}_E.$$
Notice that in the framework of Section~\ref{sec:asympto}, this space is nothing but the domain of the operator $\A_{0}$ endowed with the graph norm.

\begin{definition}
A function $u:[0,+\infty)\dans E_{+}$ is called a \emph{classical solution} of Equation~\eqref{eq:rep-mut} if $u\in C^1([0,+\infty),E)\cap C([0,+\infty),E(W))$ and~\eqref{eq:rep-mut} holds.

It is called a \emph{mild solution} of Equation~\eqref{eq:rep-mut} if $u\in C([0,\tau],E)\cap L^1([0,\tau],E(W))$ for all $\tau>0$, and
$$u(t)={\rm e}^{-\lambda_{*} t}T_{t}u_{0} + \int_{0}^t\langle u(s),W\rangle\, {\rm e}^{-\lambda_{*} (t-s)}T_{t-s}u(s)\,ds,$$
for all $t\geq0$ (Recall that the eigenvalue $\lambda_{*}$ is defined in \eqref{eq:Def-lambda-*}).
\end{definition}

\begin{proposition}[The solution of \eqref{eq:rep-mut} in terms of that of \eqref{eq:parabolique}]\label{prop:wellposed_rep-mut}
Suppose that Assumptions~\ref{ass:kernel} and~\ref{ass:potential} are verified, and let $u_{0}\in E_{+}$ with $\langle u_{0},\1\rangle=1.$
There exists a unique mild solution to Equation~\eqref{eq:rep-mut} starting from $u_{0}$, and it is given by
$$u(t)=\frac{T_{t}u_{0}}{\langle T_{t}u_{0},\1\rangle}.$$
If additionally $u_{0}\in E(W)$ then it is a classical solution.
\end{proposition}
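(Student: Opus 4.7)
My plan is to verify by direct computation that $u(t):=T_{t}u_{0}/m(t)$, where $m(t):=\langle T_{t}u_{0},\1\rangle$, is a mild solution (classical if $u_{0}\in E(W)$), and to obtain uniqueness via a time-dependent rescaling that converts any mild solution into the unique mild solution of the linear equation~\eqref{eq:evolution-A}. Positivity of $(T_{t})_{t\geq 0}$ (Proposition~\ref{prop:wellposedness}), the hypothesis $u_{0}\geq 0$ with $\langle u_{0},\1\rangle=1$, and the strong maximum principle Lemma~\ref{lem:Max-principle} applied to the resolvent of $\A$ jointly ensure that $m(t)>0$ for all $t\geq 0$, so the ratio is well-defined and nonnegative; continuity of $m$ comes from the strong continuity of the semigroup.

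\textbf{Existence and the formula.} The central algebraic identity is the Fubini-type computation
\[\langle Lv,\1\rangle=-\sigma^{2}\langle v,\1\rangle+\overline{W}[v],\]
which, combined with $\partial_{t}T_{t}u_{0}=\A T_{t}u_{0}=(-L+\lambda_{1})T_{t}u_{0}$, produces the scalar ODE $m'(t)=(\lambda_{*}-\overline{W}[u(t)])\,m(t)$ (valid first for $u_{0}\in D(\A)$ and then, integrated in time, for $u_{0}\in E$ by density and continuous dependence), yielding
\[m(t)=\exp\!\Bigl(\lambda_{*}t-\int_{0}^{t}\overline{W}[u(s)]\,ds\Bigr).\]
Substituting this formula together with the linear Duhamel identity of Proposition~\ref{prop:wellposedness} into the candidate $u=T_{t}u_{0}/m(t)$ and rearranging reproduces exactly the nonlinear mild formulation, so $u$ is a mild solution. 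When $u_{0}\in E(W)=D(\A)$, the regularity $T_{t}u_{0}\in C^{1}([0,\infty),E)\cap C([0,\infty),D(\A))$ transfers to $u$ through division by the smooth, strictly positive scalar $m$, producing a classical solution.

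\textbf{Uniqueness.} For an arbitrary mild solution $\tilde u$ on $[0,\tau]$, set
\[\alpha(t):=\exp\!\Bigl(\lambda_{*}t-\int_{0}^{t}\overline{W}[\tilde u(s)]\,ds\Bigr),\]
which is well-defined because the definition of mild solution includes $\tilde u\in L^{1}([0,\tau],E(W))$, so that $\overline{W}[\tilde u(\cdot)]\in L^{1}([0,\tau])$. The key point is that $v(t):=\alpha(t)\tilde u(t)$ satisfies the \emph{linear} equation~\eqref{eq:evolution-A} in the mild sense: formally $\alpha'/\alpha=\lambda_{*}-\overline{W}[\tilde u]$ exactly absorbs the nonlinear reaction term. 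To make this rigorous at the mild level, I would multiply the nonlinear Duhamel identity for $\tilde u$ by $\alpha(t)$ and, using $\alpha(t)e^{-\lambda_{*}(t-s)}=\alpha(s)\exp(-\int_{s}^{t}\overline{W}[\tilde u])$, recognize the result as the linear mild identity for $v$; uniqueness in Proposition~\ref{prop:wellposedness} then forces $v(t)=T_{t}u_{0}$. Pairing with $\1$ and using mass conservation $\langle\tilde u(t),\1\rangle=1$, itself a consequence of the above Fubini identity applied to the nonlinear mild formulation, gives $\alpha=m$, hence $\tilde u = u$.

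\textbf{Main difficulty.} The central technical obstacle is that a mild solution is not a priori pointwise differentiable in $t$, so the formal identification $\partial_{t}(\alpha\tilde u)=\A(\alpha\tilde u)$ has to be translated into an identity inside the Duhamel integral; this requires a careful Fubini exchange and/or a density argument from $E(W)$ initial data to general $u_{0}\in E$. A secondary subtlety is that $\1\notin E'$ when $E=L^{p}$ ($p>1$) or $E=C_{0}$, so the various pairings with $\1$ must be justified \emph{a~posteriori} from the nonnegativity of $\tilde u$ and the $L^{1}([0,\tau],E(W))$ regularity, which, combined with the confining character of $W$, forces $\tilde u(t)$ to be integrable off compact sets.
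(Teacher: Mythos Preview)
Your overall strategy coincides with the paper's: verify that the explicit formula is a (mild/classical) solution, then prove uniqueness by the rescaling $v=\alpha\tilde u$, which should solve the linear equation~\eqref{eq:evolution-A}. The existence part and the classical case are fine and essentially identical to the paper's argument.

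The gap is in the uniqueness for mild solutions. When you multiply the nonlinear Duhamel identity by $\alpha(t)$ and use $\alpha(t)e^{-\lambda_*(t-s)}=\alpha(s)\exp\bigl(-\int_s^t\overline W[\tilde u]\bigr)$, what you actually obtain is
\[
v(t)=\beta(t)\,T_t u_0+\int_0^t \overline W[\tilde u(s)]\,\frac{\beta(t)}{\beta(s)}\,T_{t-s}v(s)\,ds,\qquad \beta(t):=\exp\Big(-\int_0^t\overline W[\tilde u]\Big),
\]
which is a Volterra equation with kernel built from $T_{t-s}$, \emph{not} the linear mild identity $v(t)=u_0+A\int_0^t v(s)\,ds$ (nor either Duhamel formula~\eqref{eq:Duhamel}--\eqref{eq:Duhamel-bis}). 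So you cannot invoke the uniqueness in Proposition~\ref{prop:wellposedness} at this point. You can close the gap by observing that $T_\cdot u_0$ solves this Volterra equation and then applying Gronwall to $w:=v-T_\cdot u_0$ (which satisfies the homogeneous version, with locally integrable kernel since $\overline W[\tilde u]\in L^1_{\mathrm{loc}}$), but this step is not in your outline and the ``recognition'' you claim does not hold as stated.

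The paper takes a different route for this step: it pairs the nonlinear mild identity against $f\in C_c(\R^N)\subset D(A)$ and uses the \emph{self-adjointness} of $T_t$ (coming from Assumption~\ref{ass:J-symmetric}) to write $\langle \tilde T_{t-s}\tilde u(s),f\rangle=\langle \tilde u(s),\tilde T_{t-s}f\rangle$, which is then differentiable in $t$ because $f\in D(A)$. Differentiating yields $\frac{d}{dt}\langle v(t),f\rangle=\langle v(t),Af\rangle$; integrating and using density of $C_c$ gives exactly the linear mild identity, so Proposition~\ref{prop:wellposedness} applies directly. Your approach, once completed with the Gronwall step, has the merit of not using self-adjointness; the paper's approach avoids the extra fixed-point argument but is tied to the symmetric setting. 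Finally, once $v=T_tu_0$ is established you do not need to argue mass conservation separately: $\tilde u=T_tu_0/\alpha$ forces $\alpha'=\lambda_*\alpha-\langle T_tu_0,W\rangle$, which is the same scalar ODE satisfied by $m$ with the same initial value, hence $\alpha=m$ and $\tilde u=u$.
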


Before giving the proof of this result, note that the condition $\langle u_{0},\1\rangle=1$ implies that when $E=L^p(\R^N)$, $u_0$ also belongs to $L^1(\R^N)$.
By virtue of the comment after Proposition~\ref{prop:wellposedness}, this guarantees that $T_tu_0$ is also in $L^1(\R^N)$ for all $t\geq0$, and consequently $\langle T_{t}u_{0},\1\rangle$ is finite.

\begin{proof}
We start with the case $u_{0}\in E(W)$ and show that the function
$$u(t)=\frac{T_{t}u_{0}}{\langle T_{t}u_{0},\1\rangle}$$
is a classical solution to Equation~\eqref{eq:rep-mut}.
 By Proposition~\ref{prop:wellposedness}, since $u_{0}\in E(W)=D(\A)$, the function $t\mapsto T_{t}u_{0}$ is continuously differentiable and we have
$$\frac{d}{dt} u(t)=\frac{1}{\langle T_{t}u_{0},\1\rangle}\bigg[\A T_{t}u_{0}-\frac{\langle\A T_{t}u_{0},\1\rangle}{\langle T_{t}u_{0},\1\rangle}T_{t}u_{0}\bigg].$$
Using the fact that $\A$ is self-adjoint and that
$$\langle\A T_{t}u_{0},\1\rangle=\langle T_{t}u_{0},\A\1\rangle = \lambda_{*} \langle T_{t}u_{0},\1\rangle-\langle T_{t}u_{0},W\rangle$$
we get
$$\frac{d}{dt} u(t) = (\A - \lambda_{*} I) u(t)+\langle u(t),W\rangle u(t).$$
This proves the existence part.

For the uniqueness of the solution, we use the uniqueness result for the linear equation.
Let $u$ be a classical solution to Equation~\eqref{eq:rep-mut}.
It is clear that
$$v(t)=u(t)\, \exp\left(\lambda_{*} t - \int_{0}^t\langle u(s),W\rangle\,ds \right)$$
is continuously differentiable, and by differentiation we readily get that $v$ is the unique classical solution to Equation~\eqref{eq:evolution-A}.

\medskip

Let us now turn to the case where $u_{0}$ does not necessarily belong to $E(W)$.
The approach is the same as before, but one has to deal with mild solutions.
Defining ${\widetilde T}_{t}={\rm e}^{-\lambda_{*} t}T_{t}$ and setting again 
$$u(t)=\frac{T_{t}u_{0}}{\langle T_{t}u_{0},\1\rangle}=\frac{{\widetilde T}_{t}u_{0}}{\langle {\widetilde T}_{t}u_{0},\1\rangle},$$
we have for any $t \geq 0$
\begin{equation}\label{machin-chose}
\int_{0}^t\langle u(s),W\rangle {\widetilde T}_{t-s}u(s)\,ds=\int_{0}^t\frac{\langle {\widetilde T}_{s}u_{0},W\rangle}{\langle {\widetilde T}_{s}u_{0},\1\rangle}\frac{{\widetilde T}_{t}u_{0}}{\langle{\widetilde T}_{s}u_{0},\1\rangle}ds=\bigg(\int_{0}^t\frac{\langle {\widetilde T}_{s}u_{0},W\rangle}{\langle {\widetilde T}_{s}u_{0},\1\rangle^2}ds\bigg){\widetilde T}_{t}u_{0}.
\end{equation}
Since $t\mapsto T_{t}u_{0}$ is a mild solution of Equation~\eqref{eq:evolution-A} we get by integration, using Fubini-Tonelli's theorem (note that $T_{t}u_{0}$ and $W$ are nonnegative),
$$\langle T_{t}u_{0},\1\rangle=\langle u_{0},\1\rangle+\left\langle\int_{0}^t T_{s}u_{0}\,ds,\A\1\right\rangle=1+\int_{0}^t\langle T_{s}u_{0},\lambda_{*} -W\rangle\,ds.$$
Since $t\mapsto\langle T_{t}u_{0},\lambda_{*} - W\rangle$ is locally integrable, it ensures that $t\mapsto\langle T_{t}u_{0},\1\rangle$ belongs to $W^{1,1}_{\rm loc}(0,+\infty)$ with,  in the weak sense,
$$\frac{d}{dt}\langle T_{t}u_{0},\1\rangle=\langle T_{t}u_{0},\lambda_{*} -W\rangle,\quad\text{or equivalently}\quad
\frac{d}{dt}\langle{\widetilde T}_{t}u_{0},\1\rangle=-\langle{\widetilde T}_{t}u_{0},W\rangle.$$
As a result, since $\langle u_{0},\1\rangle=1$,
$$\int_{0}^t\frac{\langle{\widetilde T}_{s}u_{0},W\rangle}{\langle{\widetilde T}_{s}u_{0},\1\rangle^2}ds=\frac{1}{\langle{\widetilde T}_{t}u_{0},\1\rangle}-1$$
which, combined with \eqref{machin-chose}, yields
$$\int_{0}^t\langle u(s),W\rangle{\widetilde T}_{t-s}u(s)\,ds=\frac{{\widetilde T}_{t}u_{0}}{\langle{\widetilde T}_{t}u_{0},\1\rangle}-{\widetilde T}_{t}u_{0} = u(t)-{\widetilde T}_{t}u_{0}$$
and this exactly means that $u$ is a mild solution to Equation~\eqref{eq:rep-mut}.

To prove the uniqueness of the solution, we consider a mild solution $u$ to Equation~\eqref{eq:rep-mut} and we define 
$$v(t)=u(t)\, \exp\left(\lambda_{*} t-\int_{0}^t\langle u(s),W\rangle\,ds\right).$$
Let us also take a function $f\in C_{c}(\R^N)\subset D(\A)$.
Since $T_{t}$ is self-adjoint, by differentiation of the equality
\begin{align*}
\langle u(t),f\rangle&=\langle{\widetilde T}_{t}u_{0},f\rangle+\int_{0}^t\langle u(s),W\rangle\langle{\widetilde T}_{t-s} u(s),f\rangle\,ds\\
&=\langle u_{0},{\widetilde T}_{t} f\rangle+\int_{0}^t\langle u(s),W\rangle\langle u(s),{\widetilde T}_{t-s}f\rangle\,ds
\end{align*}
we get
\begin{align*}
\frac{d}{dt}\langle u(t),f\rangle & = \langle u_{0},{\widetilde T}_{t}(\A - \lambda_{*} ) f\rangle+\langle u(t),W\rangle\langle u(t),f\rangle+\int_{0}^t\!\langle u(s),W\rangle\langle u(s),{\widetilde T}_{t-s}^*(\A -\lambda_{*} ) f\rangle\,ds\\
&=\langle u(t),(\A-\lambda_{*} ) f\rangle+\langle u(t),W\rangle\langle u(t),f\rangle.
\end{align*}
Since $t\mapsto\int_{0}^t\langle u(s),W\rangle\,ds$ belongs to $W^{1,1}_{\rm loc}(0,+\infty)$, we obtain
\begin{align*}
\frac{d}{dt}\langle v(t),f\rangle&=\frac{d}{dt}\langle u(t),f\rangle {\rm e}^{\lambda_{*} t-\int_{0}^t\langle u(s),W\rangle\,ds}
+\big(\lambda_{*} -\langle u(t),W\rangle \big)\langle u(t),f\rangle {\rm e}^{\lambda_{*} t-\int_{0}^t\langle u(s),W\rangle\,ds}\\
&=\langle u(t),\A f\rangle {\rm e}^{\lambda_{*} t-\int_{0}^t\langle u(s),W\rangle\,ds}=\langle v(t),\A f\rangle.
\end{align*}
Integrating between $0$ and $t$ we obtain
$$\langle v(t),f\rangle-\langle u_{0},f\rangle=\int_{0}^t\langle v(s),\A f\rangle\,ds=\left\langle\int_{0}^t v(s)\,ds,\A f\right\rangle=\left\langle\A\int_{0}^t v(s)\,ds, f\right\rangle$$
for all $f\in C_{c}(\R^N)$.
By density of $C_{c}(\R^N)$ in $E$ we get that $v$ is a mild solution to the linear equation, and so $v(t)=T_{t}u_{0}$. 
\end{proof}

Due to the explicit expression of the solution to the replicator-mutator model in terms of the semigroup $(T_{t})_{t\geq0}$ obtained in Proposition~\ref{prop:wellposed_rep-mut}, the conclusion of Theorem~\ref{thm:rep-mut} follows from Theorem~\ref{thm:asympto}.

\begin{proof}[Proof of Theorem~\ref{thm:rep-mut}]
It suffices to write
\begin{align*}
\left\|\frac{T_{t}u_{0}}{\langle T_{t}u_{0},\1\rangle}-\frac{\phi}{\langle\phi,\1\rangle}\right\|_E&=\left\|\frac{\langle\langle u_{0},\phi\rangle\phi - T_{t}u_{0},\1\rangle T_{t}u_{0} + \langle T_{t}u_{0},\1\rangle(T_{t}u_{0} - \langle u_{0},\phi\rangle\phi)}{\langle T_{t}u_{0},\1\rangle\langle\phi,\1\rangle\langle u_{0},\phi\rangle}\right\|_E\\
&\leq\frac{\|T_{t}u_{0}\|_E}{\langle T_{t}u_{0},\1\rangle}\frac{\|T_{t}u_{0} - \langle u_{0},\phi \rangle\phi\|_\TV}{\langle\phi,\1\rangle\langle u_{0},\phi \rangle}+\frac{\|T_{t}u_{0} - \langle u_{0},\phi \rangle\phi\|_E}{\langle\phi,\1\rangle\langle u_{0},\phi \rangle}
\end{align*}
and use the exponential convergence in Theorem~\ref{thm:asympto} to get the result which is valid for any~$E$. For $E=\MM(\R^N)$ or $E=L^1(\R^N)$ the stronger conclusion follows by noticing that in these two cases $\langle T_{t}u_{0},\1\rangle=\|T_{t}u_{0}\|_E$.
\end{proof}

\section{About the non-symmetric case}\label{sec:non-sym}

In this last section, we do not assume that $J$ is even, that is Assumption~\ref{ass:J-symmetric} is not supposed any more.
In counterpart, we strengthen Assumption~\ref{ass:linkJW} by requiring that Assumption~\ref{ass:Coville} is verified.

As in the symmetric case, we first consider the Banach space $E=L^2(\R^N)$.
For $J$ non-even, the operator $-L$ is not self-adjoint, and we cannot use the variational approach to prove the existence of a first eigenfunction.
We bypass this issue by taking advantage of the fact that the semigroup generated by $-L$ is irreducible.

Proposition~\ref{prop:wellposedness} does not require $J$ to be even, and it ensures that $-L$ generates a positive semigroup $(U_t)_{t\geq0}$ in any of the considered Banach spaces $E$. 
The strong maximum principle in Lemma~\ref{lem:Max-principle} is also valid without evenness assumption on~$J$,
and it precisely means that $(U_t)_{t\geq0}$ is irreducible in $L^p(\R^N)$ for $1\leq p<\infty$ (see~\cite[Definition C-III-3.1.(v)]{Nagel86}, recalling that the quasi-interior points in $L^p(\R^N)$ with $1\leq p<\infty$ are the functions strictly positive a.e.).

We thus have the following result.

\begin{theorem}[The non-symmetric case]\label{thm:L2-non-sym}
Let Assumptions~\ref{ass:kernel}, \ref{ass:potential} and~\ref{ass:Coville} hold.
Then there exist $\lambda_1<0$ and two positive functions $\varphi$ and $\varphi^*$ in $L^2(\R^N)$ with $\|\varphi\|_{L^2(\R^N)}=\langle\varphi^*,\varphi\rangle=1$ such that
\[L\varphi=\lambda_1\varphi\qquad\text{and}\qquad L^*\varphi^*=\lambda_1\varphi^*.\]
Moreover, there exist two constants $C,a>0$ such that, for any $u_0\in L^2(\R^N)$ and all $t\geq0$
\begin{equation}\label{eq:conv-L2-non-sym}
\big\|{\rm e}^{\lambda_1t}\,U_tu_0-\langle\varphi^*,u_0\rangle\varphi\big\|_{L^2}\leq C{\rm e}^{-at}\left\|u_0-\langle\varphi^*,u_0\rangle\varphi\right\|_{L^2}.
\end{equation}
\end{theorem}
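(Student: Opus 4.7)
The plan is to replace the variational machinery of the symmetric case by the Perron--Frobenius theory for positive irreducible quasi-compact semigroups. Three ingredients from earlier in the paper carry over unchanged. First, Proposition~\ref{prop:wellposedness} does not use the symmetry of $J$, so $-L$ generates a positive $C_{0}$-semigroup $(U_t)_{t\geq 0}$ on $L^{2}(\R^{N})$. Second, Lemma~\ref{lem:Max-principle} is also independent of the evenness of $J$, and it precisely expresses the irreducibility of $(U_t)_{t\geq 0}$ in the sense of~\cite[Def.~C-III-3.1]{Nagel86}. Third, the proof of Lemma~\ref{rouge} uses only Young's inequality, the Duhamel formula~\eqref{eq:Duhamel-bis} and the Riesz--Fr\'echet--Kolmogorov theorem, and it adapts here by simply dropping the shift by $\lambda_{1}$; it yields both the quasi-compactness of $(U_t)_{t\geq 0}$ and the estimate $\omega_\ess(-L)\leq 0$.

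I would next show directly that $s:=s(-L)>0$, without yet invoking any eigenfunction. Under Assumption~\ref{ass:Coville}, which we may assume to hold with $|B_\epsilon|<\infty$, set $u:=\1_{B_\epsilon}/W\in D(L)$. For a.e. $x\in B_\epsilon$,
\[(-Lu)(x)=\sigma^{2}\!\int_{B_\epsilon}\!\frac{J(x-y)}{W(y)}\,dy-1\geq\mu_{0}:=\sigma^{2}\essinf_{x\in B_\epsilon}\!\int_{B_\epsilon}\!\frac{J(x-y)}{W(y)}\,dy-1>0,\]
while $(-Lu)(x)\geq 0$ for $x\notin B_\epsilon$. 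Since $u\leq 1/\epsilon$ on $B_\epsilon$, the pointwise inequality $-Lu\geq\mu u$ holds with $\mu:=\epsilon\mu_{0}>0$. Positivity of $(U_t)_{t\geq 0}$ then gives $U_tu\geq e^{\mu t}u$ pointwise, so $\|U_t u\|_{L^{2}}\geq e^{\mu t}\|u\|_{L^{2}}$ and hence $\omega_{0}(-L)\geq\mu$. Combined with $\omega_\ess(-L)\leq 0$ and the identity $\omega_{0}=\max(s,\omega_\ess)$ valid for quasi-compact semigroups (\cite[Cor.~IV.2.11]{EN}), this forces $s\geq\mu>0$; we set $\lambda_{1}:=-s<0$.

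Since $s>\omega_\ess$, the Perron--Frobenius theorem for positive irreducible semigroups (\cite[Prop.~C-III-3.5 and Thm~C-IV-2.1]{Nagel86}) ensures that $s$ is a first-order pole of the resolvent, a geometrically and algebraically simple eigenvalue of $-L$, and that its eigenspace is spanned by a function $\varphi\in L^{2}(\R^{N})$ strictly positive a.e. For the adjoint eigenfunctional, I would apply the same argument to $-L^{*}$: it corresponds to the convolution kernel $\tilde J(z):=J(-z)$ which still satisfies Assumption~\ref{ass:kernel}, so Lemma~\ref{lem:Max-principle} yields the irreducibility of $(U_t^{*})_{t\geq 0}$; quasi-compactness is preserved under adjunction since compactness is (Schauder). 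As $\sigma(L)=\sigma(L^{*})$, the same theorem provides a strictly positive $\varphi^{*}\in L^{2}(\R^{N})$ with $L^{*}\varphi^{*}=\lambda_{1}\varphi^{*}$, and we normalize $\|\varphi\|_{L^{2}}=\langle\varphi^{*},\varphi\rangle=1$.

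Finally, irreducibility together with $s$ being a first-order pole forces the peripheral spectrum $\sigma(-L)\cap\{\Re z=s\}$ to reduce to $\{s\}$ (\cite[Thm~C-III-3.8]{Nagel86}); combined with the finiteness of $\sigma(-L)\cap\{\Re z\geq s-\delta\}$ for every $\delta>0$ coming from quasi-compactness, this produces a spectral gap $\sigma(-L)\setminus\{s\}\subset\{\Re z\leq s-a\}$ for some $a>0$. The rank-one projection $\PP u:=\langle\varphi^{*},u\rangle\varphi$ commutes with $U_t$; on the closed invariant complement $\ker\PP$ the restricted semigroup inherits quasi-compactness, and its growth bound is at most $\max(s-a,\omega_\ess)\leq s-a$, which yields~\eqref{eq:conv-L2-non-sym} after multiplying by $e^{-st}=e^{\lambda_{1}t}$. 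The main obstacle is the second step above: in the symmetric case $\lambda_{1}<0$ was immediate from Lemma~\ref{lem:energy-neg}, whereas here we must produce a pointwise subsolution and exploit the positivity of the semigroup, which is precisely why the stronger Assumption~\ref{ass:Coville} (rather than Assumption~\ref{ass:linkJW}) is imposed.
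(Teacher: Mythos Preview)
Your proposal is correct and follows essentially the same route as the paper: both rely on the triad of positivity and irreducibility (Lemma~\ref{lem:Max-principle}), quasi-compactness via the Duhamel splitting (Lemma~\ref{rouge}), and the strict positivity of the growth bound obtained from the pointwise subsolution $\psi=\1_{B_\epsilon}/W$ under Assumption~\ref{ass:Coville}. Your subsolution computation is identical to the paper's.

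The only organizational difference is the order in which the conclusions are extracted. The paper invokes a single packaged statement from~\cite[C-IV, Remark~2.2.(d)]{Nagel86}, which delivers the exponential convergence~\eqref{eq:conv-L2-non-sym} together with positive $\varphi,\varphi^*$ in one stroke, and then reads off the eigenequations \emph{a posteriori} by passing to the limit in $T_tT_su_0=T_{t+s}u_0$ and in $T_t(Lu_0)$. You instead assemble the Perron--Frobenius picture by hand: first $\varphi$ from~\cite[C-III-3.5, C-IV-2.1]{Nagel86}, then $\varphi^*$ by a separate duality argument applied to $L^*$ (using that $\tilde J(z)=J(-z)$ still satisfies Assumption~\ref{ass:kernel}, Schauder for compactness, and $\sigma(L)=\sigma(L^*)$), and finally the convergence from the spectral gap on $\ker\PP$. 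Both are valid; the paper's version is more economical, yours is more explicit about where each object comes from. One small imprecision: dropping the shift in Lemma~\ref{rouge} gives only $\omega_\ess(-L)\leq0$, not strict inequality, so ``quasi-compactness of $(U_t)$'' is not yet established at that point---but since you then show $s\geq\mu>0\geq\omega_\ess$, everything you actually use goes through.
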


Let us mention that the convergence~\eqref{eq:conv-L2-non-sym} ensures the uniqueness of the triplet $(\lambda_1,\varphi,\varphi^*)$.

\begin{proof}
First we prove that Assumption~\ref{ass:Coville} ensures that $\omega_0(-L)>0$.
Take $\epsilon,\eta>0$ and $B\subset\R^N$ such that
\[\sigma^2\essinf_{B_\epsilon}\int_{B_\epsilon}\frac{J(x-y)}{W(y)}dy\geq1+\eta\]
where we recall the notation $B_\epsilon=B\cap[W\geq\epsilon]$,  and define the function
\[\psi(x):=\frac{1}{W(x)}\1_{B_\epsilon}(x).\]
Clearly $B_\epsilon$ is necessarily essentially bounded, so that $\psi\in D(-L)=L^2(1+W)$, and for almost all $x\in B_\epsilon$ we have
\[-L\psi(x)=\sigma^2\int_{B_\epsilon}\frac{J(x-y)}{W(y)}dy-1\geq\eta.\]
Since $\psi\leq1/\epsilon$, we deduce that
\[-L\psi\geq\epsilon\eta\,\psi,\quad\text{and consequently}\quad  U_t\psi\geq{\rm e}^{\epsilon\eta t}\psi.\]
This ensures that $\omega_0(-L)\geq\epsilon\eta>0$, and we set $\lambda_1:=-\omega_0(-L)$.

Besides, the proof of Lemma~\ref{rouge} does not require the evenness of $J$, and it guarantees that $\omega_\ess(\lambda_1-L)\leq\lambda_1$.
We thus have $\omega_\ess(\lambda_1-L)<\omega_0(\lambda_1-L)=0$ and this has two implications.
First, the semigroup $({\rm e}^{\lambda_1 t}\,U_t)_{t\geq0}$ generated by $\lambda_1-L$ is quasi-compact and second,
since $\omega_0(\lambda_1-L)=\max(\omega_\ess(\lambda_1-L),\s(\lambda_1-L))$ (see for instance K.J. Engel \& R. Nagel~\cite[Corollary IV.2.11]{EN}), the spectral bound $\s(\lambda_1-L)$ is zero.

We are in position to apply the result in W. Arendt \& al.~\cite[Section~C-IV, Remark~2.2.(d)]{Nagel86} to the semigroup $(T_t)_{t\geq0}:=({\rm e}^{\lambda_1 t}\,U_t)_{t\geq0}$, and it guarantees the existence of $\varphi$ and $\varphi^*$ positive such that~\eqref{eq:conv-L2-non-sym} holds.
Using this convergence to pass to the limit $s\to+\infty$ in $T_tT_su_0=T_{t+s}u_0$ we get that $T_t\varphi=\varphi$, which yields $L\varphi=\lambda_1\varphi$ and, choosing $u_0=\varphi$ in~\eqref{eq:conv-L2-non-sym}, $\langle\varphi^*,\varphi\rangle=1$.
Finally, taking $Lu_0$ in place of $u_0$ in~\eqref{eq:conv-L2-non-sym} we get by passing to the limit $t\to+\infty$ that
\[\langle\varphi^*,u_0\rangle L\varphi=\langle\varphi^*,Lu_0\rangle\varphi\]
and so $\lambda_1\langle\varphi^*,u_0\rangle=\langle\varphi^*,Lu_0\rangle=\langle L^*\varphi^*,u_0\rangle$ for all $u_0\in D(L)=L^2(1+W)$.
This obviously implies that $L^*\varphi^*=\lambda_1\varphi^*$, and the proof is complete.
\end{proof}

Now that we have Theorem~\ref{thm:L2-non-sym} at hand, we can use the same strategy as in the symmetric case to deduce the following counterpart of Theorem~\ref{thm:asympto} in the non-symmetric case.

\begin{corollary}
The functions $\varphi$ and $\varphi^*$ belong to $L^1(\R^N)\cap C_0(\R^N)$ and there exist two constants $C > 0$ and $a > 0$ such that, for any $u_{0}\in E$, the solution $u=u(t,x)$ of \eqref{eq:parabolique} starting from $u_{0}=u_0(x)$ satisfies, for all $t > 0$,
$$
\big\| {\rm e}^{\lambda_{*} t}u(t,\cdot)-\langle \phi^*,u_{0} \rangle \phi\big\|_{E} \leq C\, {\rm e}^{-at}\left\|u_{0} - \langle \varphi^*, u_{0} \rangle\phi\right\|_{E} ,
$$
where $\lambda_*=\lambda_1+\sigma^2$.
\end{corollary}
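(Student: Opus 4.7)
The statement combines the regularity $\varphi,\varphi^*\in L^1(\R^N)\cap C_0(\R^N)$ with exponential convergence in each Banach lattice $E\in\{L^p(\R^N),\,1\leq p<\infty\}\cup\{C_0(\R^N),\MM(\R^N)\}$. The plan is to follow the scheme of Section~\ref{sec:asympto} on each such $E$: prove that the semigroup $(T_t)_{t\geq0}:=({\rm e}^{\lambda_1 t}U_t)_{t\geq0}$ is quasi-compact and irreducible, extract via Corollary B-IV-2.11 in~\cite{Nagel86} a positive rank-one spectral projection $\PP_E$ satisfying $\|T_t-\PP_E\|_{E\to E}\leq C{\rm e}^{-at}$, and identify $\PP_E$ with $u_0\mapsto\langle\varphi^*,u_0\rangle\varphi$ by comparison with Theorem~\ref{thm:L2-non-sym}.

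The quasi-compactness arguments of Lemma~\ref{rouge} and of the companion lemma for $C_0(\R^N)$ in Section~\ref{sec:asympto} never invoke the evenness of $J$: the Duhamel splitting $T_t=S_t+\int_0^t T_{t-\tau}(K*S_\tau\cdot)\,d\tau$ together with compactness of $u_0\mapsto K*S_\tau u_0$ (Riesz--Fréchet--Kolmogorov for $L^p$, Ascoli for $C_0$) immediately gives $\omega_\ess(\A)\leq\lambda_1<0$ on each of these spaces. The same scheme works on $\MM(\R^N)$: one checks that $\mu\mapsto K*S_\tau\mu$ is compact from $\MM$ into $L^1\hookrightarrow\MM$ using $\|\tau_h K-K\|_{L^1}\to0$ and the same Fubini tail argument split at $|y|=R/2$, whose second piece is killed uniformly by the confinement of $W$. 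Irreducibility on every $E$ follows from the strong maximum principle Lemma~\ref{lem:Max-principle}, whose proof is also insensitive to the symmetry of $J$.

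Applying Nagel's corollary in $C_0(\R^N)$ yields $\PP_{C_0}$, and matching with Theorem~\ref{thm:L2-non-sym} on the dense subspace $C_c(\R^N)\subset C_0\cap L^2$ forces $\PP_{C_0}u_0=\langle\varphi^*,u_0\rangle\varphi$. Boundedness of $\PP_{C_0}$ on $C_0$ then makes $u_0\mapsto\langle\varphi^*,u_0\rangle$ a bounded functional on $C_0$, so by Riesz representation the positive density $\varphi^*$ defines a finite measure, i.e.\ $\varphi^*\in L^1(\R^N)$; the adjoint eigenvalue equation $(W-\lambda_1)\varphi^*=K(-\cdot)*\varphi^*$ then promotes $\varphi^*$ to $C_0(\R^N)$. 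A parallel argument in $L^1(\R^N)$, with $\PP_1u_0=\langle\varphi^*,u_0\rangle\varphi$ on the dense subspace $L^1\cap L^2$ (well-defined thanks to $\varphi^*\in L^\infty$), shows that the range of $\PP_1$ equals $\R\varphi\cap L^1$, hence $\varphi\in L^1\cap C_0$. The remaining $L^p$ spaces with $1<p<\infty$ are treated identically, $\varphi^*\in L^{p'}$ being ensured by $\varphi^*\in L^1\cap L^\infty$.

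The main obstacle is the $\MM(\R^N)$ case, because $L^1$ is not dense in $\MM$ in total variation and one cannot simply extend the $L^1$ identification by continuity. Nagel's corollary only asserts that $\PP_\MM$ is a rank-one projection onto $\R\varphi$, so $\PP_\MM\mu=f(\mu)\varphi$ for some bounded linear $f$ on $\MM$. To identify $f$, I would introduce the competitor $\PP'\mu:=\langle\varphi^*,\mu\rangle\varphi$, which is well-defined and bounded on $\MM$ thanks to $\varphi^*\in C_0$. Writing $\widetilde T_t$ for the $C_0$-semigroup on $C_0(\R^N)$ generated by $\A^*$ (which exists by applying Proposition~\ref{prop:wellposedness} to the reflected kernel $J(-\cdot)$) and using the semigroup duality $\int u_0\,d(T_t\mu)=\int(\widetilde T_tu_0)\,d\mu$, one computes $\PP'T_t\mu=\langle\widetilde T_t\varphi^*,\mu\rangle\varphi=\langle\varphi^*,\mu\rangle\varphi=\PP'\mu$ since $\A^*\varphi^*=0$. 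Applying $\PP'$ to the convergence $T_t\mu\to\PP_\MM\mu$ in $\MM$, together with $\PP'\PP_\MM=\PP_\MM$ (both projections share the range $\R\varphi$ and $\PP'\varphi=\varphi$), identifies $\PP_\MM=\PP'$ and closes the proof.
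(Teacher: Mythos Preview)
Your overall architecture is essentially what the paper intends: it gives no proof and simply says to ``use the same strategy as in the symmetric case,'' and your treatment of $C_0(\R^N)$, $L^p(\R^N)$ and the regularity $\varphi,\varphi^*\in L^1\cap C_0$ faithfully reproduces the arguments of Corollaries~\ref{villoise} and its $L^p$ companion, adapted to the non-self-adjoint setting. That part is fine.

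The one place you genuinely diverge from the paper's route is the $\MM(\R^N)$ case. The paper's ``same strategy'' would mimic Corollary~\ref{epoque}: rather than working directly on $\MM$, one applies the $C_0$ analysis to the \emph{adjoint} semigroup $(\widetilde T_t)$ on $C_0(\R^N)$ generated by $\A^*$ (kernel $\check K=K(-\cdot)$, eigenfunction $\varphi^*$), obtains $\|\widetilde T_t-\widetilde\PP\|_{C_0\to C_0}\leq Ce^{-at}$ with $\widetilde\PP f=\langle\varphi,f\rangle\varphi^*$, and then passes to the dual to get the TV estimate on $\MM$. This sidesteps any need to discuss irreducibility or rank on $\MM$.

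Your direct approach on $\MM$ is a legitimate alternative, but it has a gap as written. You invoke irreducibility on $\MM$ via Lemma~\ref{lem:Max-principle}, yet that lemma is stated only for $L^p$ functions, and irreducibility for a semigroup on $\MM(\R^N)$ is a separate statement about closed ideals of measures that you do not verify. Without it, Nagel's B-IV-2.11 only gives a finite-rank projection onto the fixed space, not a priori rank one onto $\R\varphi$; your identification step $\PP'\PP_\MM=\PP_\MM$ then fails because you cannot assert $\operatorname{ran}\PP_\MM\subset\R\varphi$. The fix is short but should be stated: if $\mu\in\MM$ is fixed by $(T_t)$, then $(W-\lambda_1)\mu=K*\mu$, and the right-hand side is an $L^1$ function (convolution of $K\in L^1$ with a finite measure), so $\mu$ is absolutely continuous and lies in $L^1$, where the fixed space is already known to be $\R\varphi$. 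With this observation your argument closes; alternatively, the duality route avoids the issue entirely.
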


\medskip
\appendix
\section{Comparison between Assumptions~\ref{ass:linkJW} and~\ref{ass:Coville}}\label{app:ex1}

First we check that when $\meas[W=0]=0$, Assumption~\ref{ass:Coville} is equivalent to the existence of a Borel set $B\subset\R^N$ such that
\begin{equation}\label{eq:CovilleB}
\sigma^2\essinf_{x\in B}\int_{B}\frac{J(x-y)}{W(y)}dy>1,
\end{equation}
which is the condition appearing in~\cite[Theorem~2.1]{Li-Cov-Wan}.
Clearly Assumption~\ref{ass:Coville} implies the existence of such a $B$ (take $B=B_\epsilon$).
Suppose now that $\meas[W=0]=0$ and that there exists $B$ such that~\eqref{eq:CovilleB} is verified.
For all $\epsilon>0$ we have
\[\sigma^2\essinf_{x\in B_\epsilon}\int_{B_\epsilon}\frac{J(x-y)}{W(y)}dy\geq\sigma^2\essinf_{x\in B}\int_{B_\epsilon}\frac{J(x-y)}{W(y)}dy\]
since $B_\epsilon=B\cap[W\geq\epsilon]\subset B$, and the monotone convergence theorem ensures that
\[\int_{B_\epsilon}\frac{J(x-y)}{W(y)}dy\xrightarrow[\epsilon\to0]{}\int_{B}\frac{J(x-y)}{W(y)}dy\]
because $\meas(B\cap[W=0])=0$.
So~\eqref{eq:CovilleB} guarantees the existence of $\epsilon>0$ such that
\[\sigma^2\essinf_{x\in B_\epsilon}\int_{B_\epsilon}\frac{J(x-y)}{W(y)}dy>1.\]

\medskip

The second part of this appendix section is devoted to the proof of the claim in Example~\ref{ex:1}.
Consider the one dimensional space $\R^N=\R$ and the coefficients
\[J(z)=\frac12\1_{[-1,1]}(z)\qquad\text{and}\qquad W(x)=\sqrt{|x|}.\]

First we check that Assumption~\ref{ass:Coville} is verified if and only if $\sigma^2>\frac 1{\sqrt 2}$.
Since $\meas[W=0]=0$, Assumption~\ref{ass:Coville} is equivalent to the existence of a Borel set $B\subset\R$ such that~\eqref{eq:CovilleB} is verified.
Moreover, since $\int_{B}\frac{J(x-y)}{W(y)}dy\leq \frac 12\int_{x-1}^{x+1}\frac{1}{\sqrt {\vert y\vert}}dy\to 0$ as $\vert x\vert \to \infty$, such a $B$ has to be essentially bounded, and we set
\[R:=\inf\{C>0\,:\, |x|\leq C\ \text{for almost all}\ x\in B\}\in[0,+\infty).\]
Now let us define $f:\R\to\R$ by
\[f(x):=\int_{-R}^{R}\frac{J(x-y)}{W(y)}dy.\]
This function is positive, continuous, even, and non-increasing on $[0,+\infty)$.
The definition of~$R$ thus implies that $\essinf_{x\in B}f(x)=f(R)$, and from \eqref{eq:CovilleB} we deduce that
\begin{equation}\label{eq:f-sigma}
\sigma ^2 f(R)>1.
\end{equation}
But $f(R)$ can be computed explicitly as follows
\[f(R)=\left\{\begin{array}{lcl}
2\sqrt{R}&&\text{if}\ 0\leq R\leq \frac 12,\vspace{1mm}\\
\sqrt{R}+\sqrt{1-R}&&\text{if}\ \frac 12\leq R\leq1,\vspace{1mm}\\
\sqrt{R}-\sqrt{R-1}&&\text{if}\ R\geq1,
\end{array}\right.\]
and we deduce that $\max_{R\geq0}f(R)=f(1/2)=\sqrt{2}$.
As a result~\eqref{eq:f-sigma} enforces $\sigma^2>\frac 1{\sqrt{2}}$.
Reciprocally, if $\sigma^2>\frac 1{\sqrt{2}}$, then Assumption~\ref{ass:Coville} is verified by choosing $B=[-\frac12,\frac12]$.

Now we check that Assumption~\ref{ass:linkJW} is verified as soon as  $\sigma^2>\frac{4}{4+\pi}$.
Since $1/W$ is locally integrable and $J$ is locally supported, it suffices to verify that
\[\sigma^2\iint_{B\times B}\frac{J(x-y)}{W(x)W(y)}\,dxdy>\int_{B}\frac{1}{W(x)}\,dx\]
for some Borel set $B\subset\R$, {\it i.e.} we can take $\epsilon=0$ in Assumption~\ref{ass:linkJW}.
We choose $B=[-1,1]$ and explicit computations yield
\begin{align*}
\iint_{B\times B}\frac{J(x-y)}{W(x)W(y)}\,dxdy&=\int_0^1\frac{1}{\sqrt{x}}\int_{x-1}^1\frac{1}{\sqrt{|y|}}\,dy\,dx\\
&=\int_0^1\frac{2}{\sqrt{x}}(1+\sqrt{1-x})\,dx=4+\pi
\end{align*}
and
\[\int_{B}\frac{1}{W(x)}\,dx=2\int_{0}^1\frac{1}{\sqrt{x}}\,dx=4.\]
As a result it suffices that $\sigma^2(4+\pi)>4$ for Assumption~\ref{ass:linkJW} to be verified.


\section{Proof of the estimate of Example~\ref{ex:2}}\label{app:ex2}

Consider the one dimensional space $\R^N=\R$ and 
\[J(z)=\frac14\1_{[-2,2]}(z)\qquad\text{and}\qquad W(x)=|x|^m\ \text{with}\ m>1.\]
We want to derive, for some well-chosen sets $B_\epsilon$ and $\Omega$, an estimate on the quantity
\[a_*=b_\epsilon-\bar\Phi\]
where
\[b_\epsilon=\bigg(\sigma^2\iint_{B_\epsilon\times B_\epsilon}\frac{J(x-y)}{W(x)W(y)}\,dxdy-\int_{B_\epsilon}\frac{dx}{W(x)}\bigg)\bigg(\int_{B_\epsilon}\frac{dx}{W(x)}\bigg)^{-2}\]
and $\bar\Phi$ is the maximal value of the function
\[\Phi(\xi)=\min\Big\{\sigma^2-\xi,\eta\meas(\Omega)+a_1\xi+a_2\sqrt{\xi}\Big\}\]
with
\[\eta=\esssup_{2\Omega}K-\essinf_{2\Omega}K,\]
\[a_1=\Big(\essinf_{2\Omega}K\Big)\bigg(\int_{\Omega^c}\frac1W\bigg)\qquad\text{and}\qquad a_2=2\sigma\sqrt{\sup_{x\in\R}\int_{\Omega^c}\frac{K(x-y)}{W(y)}dy}.\]

Let us start with $b_\epsilon$. Choosing $B_\epsilon=[-1,1]\cap[W\geq\epsilon]$ we have for all $\epsilon>0$
\[\iint_{B_\epsilon\times B_\epsilon}\frac{J(x-y)}{W(x)W(y)}\,dxdy=\frac14\bigg(\int_{B_\epsilon}\frac{dx}{W(x)}\bigg)^2,\]
so that
\begin{equation}\label{eq:b-eps-ex}
b_\epsilon=\frac{\sigma^2}{4}-\bigg(\int_{B_\epsilon}\frac{dx}{W(x)}\bigg)^{-1}\xrightarrow[\epsilon\to0]{}\frac{\sigma^2}{4}.
\end{equation}


Now let us estimate $\bar\Phi$ for the set $\Omega=(-1,1)$.
We have
\[\eta=0,\]
\[a_1=\frac{\sigma^2}{4}\int_{\Omega^c}|x|^{-m}dx=\frac{\sigma^2}{2}\int_1^\infty x^{-m}dx=\frac{\sigma^2}{(m-1)}\]
and
\[a_2=\sigma^2\sqrt{\sup_{x\in\R}\int_{\Omega^c}\1_{[-2,2]}(x-y)|y|^{-m}dy}=\sigma^2\sqrt{2\int_1^5 y^{-m}dy}=\sigma^2\sqrt2\sqrt{\frac{1-5^{1-m}}{m-1}}.\]
It is easy to see from the definition of $\Phi$ that
\[\bar\Phi\leq \eta\meas(\Omega)+a_1\sigma^2+a_2\sigma.\]
We thus get the estimate
\[\bar\Phi\leq\sigma^2\left(\frac{\sigma^2}{m-1}+\sqrt 2\frac{\sigma}{\sqrt{m-1}}\right)\]
which, together with~\eqref{eq:b-eps-ex}, yields the estimate on $a_*$ claimed in Example~\ref{ex:2}.

\bigskip
 
\noindent{\large{\bf Acknowledgements.}}  MA is supported by the ANR project DEEV,  ANR-20-CE40-0011-01, funded by the French Ministry of Research. PG is supported by the ANR project NOLO, ANR-20-CE40-0015, funded by the French Ministry of Research.


\bibliographystyle{abbrv}
\bibliography{biblio}

\end{document}